\theoremstyle{plain}
\newtheorem{theorem}{Theorem}[section]
\newtheorem{lemma}[theorem]{Lemma}
\newtheorem{corollary}[theorem]{Corollary}
\newtheorem{proposition}[theorem]{Proposition}
\theoremstyle{remark}
\newtheorem{remark}[theorem]{Remark}
\numberwithin{equation}{section}
\newcommand{\Z}{\mathbb{Z}}
\def\C{{\mathbb C}}
\def\R{{\mathbb R}}
\def\T{{\mathbb T}}
\def\N{{\mathbb N}}
\def\Z{{\mathbb Z}}
\newcommand{\dd}{\,\mathrm{d}}
\newcommand{\loc}{\mathrm{loc}}
\newcommand{\surf}{\mathrm{surf}}
\newcommand{\app}{\mathrm{app}}
\newcommand{\vecspan}{\mathrm{Span}}
\renewcommand{\div}{\operatorname{div}}
\renewcommand{\leq}{\leqslant}\renewcommand{\le}{\leqslant}
\renewcommand{\geq}{\geqslant}
\renewcommand{\epsilon}{\varepsilon}
\newcommand{\norm}[1]{\left\lVert#1\right\rVert}
\def\Tend#1#2{\mathop{\longrightarrow}\limits_{#1\rightarrow#2}}
\def\cal#1{\mathcal{#1}}
\title[Spectral gaps for linearized water waves]{Bloch-Floquet band gaps for water waves over a periodic bottom}
\author[C. Lacave]{Christophe Lacave}
\author[M. M\'enard]{Matthieu M\'enard}
\author[C. Sulem]{Catherine Sulem}
\address[C. Lacave]{CNRS, LAMA, ISTerre, Univ. Savoie Mont Blanc, 73000 Chamb\'ery, France.}
\email{Christophe.Lacave@univ-smb.fr}
\address[M. M\'enard]{D\'epartement de math\'ematiques, Universit\'e Libre de Bruxelles, 1050 Brussels, Belgium.}
\email{matthieu.menard@ulb.be}
\address[C. Sulem]{Department of Mathematics, University of Toronto, M5S 2E4 Toronto Ontario, Canada.}
\email{sulem@math.utoronto.ca}
\date{\today}
\begin{document}
\maketitle

\begin{abstract}
A central object in the analysis of the water wave problem is the Dirichlet-Neumann operator. This paper is devoted to the study of its spectrum in the context of the water wave system linearized near equilibrium in a domain with a variable bottom, assumed to be a $C^2$ periodic function. We use the analyticity of the Dirichlet-Neumann operator with respect to the bottom variation and combine it with general properties of elliptic systems and spectral theory for self-adjoint operators to develop a Bloch-Floquet theory and describe the structure of its spectrum. We find that under some conditions on the bottom variations, the spectrum is composed of bands separated by gaps, with explicit formulas for their sizes and locations. 
\end{abstract}

\textit{\small We dedicate this article to the memory of Thomas Kappeler. This work started as a collaborative project with Thomas who sadly left us abruptly. We will always be grateful for his inspiration, generosity and kindness. }

\section{Introduction}

This study concerns the motion of a free surface wave over a variable bottom. There is a large literature devoted to this subject due to its relevance to oceanography in coastal engineering. Formation of long-shore sandbars along gentle beaches has been observed in open ocean coasts or bays and it is important to understand how they affect the propagation of waves \cite{MSY05-P1}. For mathematical purposes, the variable bottom is often assumed to be periodic or described by a stationary random process. The effect of a fast oscillating bottom has been studied in many asymptotic regimes \cite{RP83, CGNS05, GKN07,CLS12} where effective equations are derived using techniques of homogenization and of multiple scales. Here, we restrict ourselves to the linearized water wave problem near the equilibrium with a periodic bottom. In this setting, there is a classical phenomenon known as the Bragg resonance reflection phenomenon, in analogy with the Bragg's law for X-rays in crystallography. In the water wave setting, it refers to the situation where the bottom has the form $y =h(x)$, where $h(x) =h_1$ (constant) if $x\le 0$ and $x\geq \ell$ and is a periodic function of period $d$ for $0<x<\ell$. Resonance happens when the wavelength $\lambda$ of the incident wave is equal to twice that of the bottom variation with higher-order resonances for $\lambda = 2d/n, n = 2, 3$... This configuration leads to strong reflected waves. This phenomenon was observed experimentally by Heathershaw \cite{H82} and derived formally by Mei \cite{M85} and Miles \cite{M98}. Based on this analysis, Mei \cite{M85} proposed a theory that strong reflection can be induced by sand bars if the Bragg resonance conditions are met, thus protecting the beach from the full impact of the waves. Higher-order Bragg reflections have been observed experimentally and numerically by Guazzelli, Rey and Belzons \cite{GRB92}.

The problem under consideration is wave propagation over a periodic bottom of infinite extension. Porter and Porter \cite{PP03} draw a comparison between scattering by a finite length periodic bottom and infinite length periodic bottom.
The two problems have however a different character, the finite extension one giving rise to a boundary-value problem and the phenomenon of Bragg resonances while the infinite extension one takes the form of an eigenvalue problem. 
They first observed that the scattering properties for a bottom with an arbitrary number of periods may be obtained from those for the single period through a transfer matrix technique. On the other hand, they defined an extended Bloch problem where they not only look for eigenvalues corresponding to propagating waves over the infinite periodic bottom but also eigenvalues corresponding to evanescent waves, which increase or decrease as they cross a single period.
They show that the eigenvalues of the extended transfer matrix used in the scattering case approximate those of the extended Bloch problem (see also Yu and Howard \cite{YH12}). Other authors pointed out the close relation between the problems of wave propagation over a periodic bottom of finite or infinite extension respectively. We mention the work of Linton \cite{L11} who observed a band-gap structure 
in the context of water waves propagating over infinite periodic arrays of submerged horizontal circular cylinders in deep water. He showed that the approximate location of the band gaps can be obtained from the phase of the transmission coefficient for a single cylinder. In a more recent reference, Liu, Liu and Lin \cite{LLL19}, considered shallow water waves over infinite arrays of parabolic bars. They explicitly calculated the band gaps and analyzed the influence of the bar height, width and spacing on band gaps. They found that, if a band gap exists for the spectral problem with an infinite periodic array, it gives rise for the associated problem with a finite periodic extension, to a Bragg resonant reflection occuring at the mid-point of the first band gap, providing a more accurate prediction that the classical Bragg's law.

\subsection{Setting of the problem}

The goal of this paper is a detailed study of the Dirichlet-Neumann operator and in particular the description of its spectrum for general periodic topography through an analysis of associated elliptic systems and Bloch-Floquet theory.

The starting point of our analysis is the water wave problem written in its Hamiltonian formulation \cite{Z68, CS93}.
The two dimensional fluid domain is
\[
 {\Omega}_\epsilon(b,\eta) = \left\{(x, z): x \in \R, -h + \epsilon b(x) < z< \eta(x,t) \right\}
\]
where the variable bottom is given by $z =-h +\epsilon b(x)$, and the free surface elevation by $z= \eta(x,t)$. The water wave problem in canonical variables $(\eta, \xi)$, where $\xi(x) $ is the trace of the velocity potential on the free surface $\{z=\eta(x,t)\}$ has the form 
\begin{align}\label{WWeq}
	\left\{ 
	\begin{array}{l}
	\partial_t \eta -G[\eta, \epsilon b] \xi =0 , \\
	\partial_t \xi + g\eta +\frac{1}{2} |\partial_x\xi|^2 -
 \dfrac{\left(G[\eta, \epsilon b] \xi + \partial_x \eta\cdot \partial_x
 \xi\right)^2}{2(1+ |\partial_x\eta|^2)} = 0 ,
\end{array}
\right .
\end{align}
where $g$ is the acceleration due to gravity. 
The operator $G[\eta,\epsilon b] $ is the Dirichlet-Neumann operator, defined by
\begin{equation*}
	G[\eta,\epsilon b]\xi =
	\sqrt{1+|\partial_x\eta|^2}\partial_n\Phi_{\vert_{z = \eta}} ,
\end{equation*}
where $\Phi$ is the solution of the elliptic boundary value problem
\begin{equation*}
	\left\lbrace 
	\begin{array}{l}
	\partial_x^2\Phi+\partial_z^2\Phi = 0 \quad 
 \text{in} \quad {\Omega}_\epsilon(b,\eta) , \\
	\Phi_{\vert_{z = \eta}} = \xi,\qquad
 \partial_n\Phi_{\vert_{z = -h+\epsilon b }} = 0 ,
	\end{array}\right. 
\end{equation*}
The system \eqref{WWeq}, linearized about the stationary solution $(\eta(x),\xi(x))=(0,0)$ is 
\begin{equation*}
	\begin{cases}
	\partial_t \eta -G[\epsilon b] \xi =0 , \\
	\partial_t \xi + g\eta =0 .
\end{cases}
\end{equation*}
where now, and for the remainder of this article, we denote $G[0,\epsilon b]$ by $G[\epsilon b]$. 
The surface elevation $\eta$ satisfies
\begin{equation*}
 \partial^2_{t} \eta + g G[\epsilon b] \eta = 0 ,
\end{equation*}
and initial conditions 
\begin{equation*}
\eta(x,0) = \eta_0(x), \;\; \partial_t\eta(x,0) = \eta_1(x), \;\; x \in \R .
\end{equation*}
When we look for solutions of the form $ \eta(x,t) = e^{i\omega t} v(x)$, we are led to the spectral problem $G[\epsilon b] v = \lambda v$ with $\lambda = \omega^2/g$. The operator $G[\epsilon b]$ is a nonlocal operator depending on the function $b(x)$, that we assume to be a $2\pi$-periodic $C^2$ function.
In analogy with second-order differential operator with periodic coefficients, the goal is to develop a Bloch-Floquet decomposition to describe its spectrum.

We will recall in Section~\ref{sec:Bloch} the principle of the Bloch-Floquet theory, where one decomposes a function as an integral of $\theta$-periodic functions, namely one writes $f(x)=\int_{-1/2}^{1/2}\cal{U} f(x,\theta)\dd \theta$ where $\cal{U} f$ is $\theta$-periodic \begin{equation*}
 \cal{U} f(x+2\pi,\theta) = e^{2\pi i\theta}\cal{U} f(x,\theta), 
\end{equation*}
 for the Bloch-Floquet parameter $\theta \in (-\frac{1}{2},\frac{1}{2}]$.
The principle of the Bloch-Floquet decomposition is to describe the spectrum and the generalized eigenfunctions of $G[\varepsilon b]$ by a family, parametrized by $\theta$, of spectral problems for $G_\theta[\varepsilon b]$ acting on $2\pi$ periodic functions:
\begin{equation}\label{ev-pb-per}
 \begin{cases}
 G_\theta[\epsilon b] \phi^{\epsilon}(x,\theta) := e^{-i\theta x} G[\epsilon b] e^{i\theta x} \phi^{\epsilon}(x,\theta) 
 = \lambda^{\epsilon}(\theta) \phi^{\epsilon}(x,\theta) , \\
 \phi^{\epsilon}(x+2 \pi, \theta) = \phi^{\epsilon}(x,\theta) .
 \end{cases}
\end{equation}
Equivalently, $G_\theta[\epsilon b]$ is defined in terms of the elliptic system \eqref{D2N-theta} by \eqref{DN0-theta-def}.

The goal of this paper is to describe rigorously the spectrum of the Dirichlet-Neumann operator in the form of bands separated by gaps, thus expressing the wave elevation as a sum of Bloch waves. 
This decomposition shows that there are band gaps within which Bloch waves cannot exist.
A classical model operator that exhibits this behavior is the Hill operator $H=-\frac{\dd}{\dd x^2} +V(x)$, where $V(x)$ is a smooth $2\pi$-periodic potential on $\R$. The associated spectral problem is 
\[
-\frac{\dd\varphi}{\dd x^2} +V(x)\varphi =\lambda \varphi ,
\]
which has been intensively studied. We refer to the books of Eastham \cite{E73}, Reed and Simon \cite{RS78} and the detailed review of Kuchment \cite{K16} and references therein.

When the bottom is flat, $b=0$, the eigenvalues $\kappa_p(\theta)$ of $G_\theta[0]$ are given explicitly in terms of the dispersion relation for water waves over a constant depth $h=1$ and $\varepsilon=0$:
\begin{equation*} 
 \kappa_p(\theta)=\frac{\omega^2(p+\theta)}g = (p+\theta)\tanh(p+\theta),
\end{equation*}
for $p \in \Z$, and Bloch-Floquet parameter $\theta \in (-\frac{1}{2},\frac{1}{2}]$. Eigenvalues are simple for $-1/2 < \theta < 0$ and $0 < \theta < 1/2$. For $ \theta = 0, 1/2$, they have multiplicity two. When reordered appropriately by their size, the eigenvalues, denoted $ \lambda_p^{0}(\theta)$, are continuous in $\theta$ (see Figure~\ref{fig}.a). The spectrum of the Dirichlet-Neumann operator $G[0]$ is the half-line $[0, +\infty)$. The goal of this work is to understand how the presence of a small periodic bottom modifies the structure of the Dirichlet-Neumann operator.

\subsection{Main results}
We will prove that, under certain conditions on the Fourier coefficients of $b$, the presence of the bottom generally results in the splitting of double eigenvalues near points of multiplicity, creating a spectral gap.
Yu and Howard \cite{YH12} computed numerically Bloch eigenfunctions and eigenvalues of \eqref{ev-pb-per} for various examples of bottom profiles using a conformal map that transforms the original fluid domain to a uniform strip, thus identifying the corresponding spectral gaps. 
Chiad\`o Piat, Nazarov and Ruotsalainen \cite{CNR13} gave a necessary and sufficient condition on the Fourier coefficients of the bottom variations to ensure the opening of a finite number of spectral gaps of $\mathcal{O}(\epsilon)$. In \cite{CGLS18}, a systematic method, based on the Taylor expansion of the Dirichlet-Neumann operator in powers of $b$, was proposed to compute explicitly spectral gaps, allowing spectral gaps of high order. A simple example of bottom topography was given leading to gaps of order $\mathcal{O}(\epsilon^4)$.
In this paper, we give a full description of the spectrum of $G[\epsilon b]$.
We first prove that it is purely absolutely continuous and is composed of union of bands.
 We then give necessary and sufficient conditions on the Fourier coefficients of $b(x)$ for the opening of gaps of order $\epsilon$ and $\epsilon^2$, based on a rigorous perturbation theory near double eigenvalues of the unperturbed problem.

The main ingredients of our analysis are elliptic estimates \cite{Lannes}, perturbation theory of self adjoint operators \cite{Rellich, RS78,L22} and the notion of quasi-modes which provides, under some conditions, a method to construct eigenvalues from approximate ones \cite[Proposition~5.1]{BKP15}. It also strongly relies on the
analyticity of the $G_\theta[\epsilon b]$ and its resolvent with respect to $\epsilon$ and $\theta$.

\begin{theorem}[Structure of the spectrum] \label{union-of-bands} 
Let $b\in C^2(\T_{2\pi})$. There exists $\varepsilon_{0}(b)>0$ such that the following holds true for any $\varepsilon\in [0,\varepsilon_{0})$.
\begin{enumerate}[(i)]
 \item The spectrum $\sigma(G[\epsilon b])$ is composed of a union of bands. Namely,
\begin{equation*}
 \sigma(G[\epsilon b]) = \bigcup_{p=0}^{\infty}\lambda_p^{\epsilon}\Big((-\tfrac{1}{2},\tfrac{1}{2}]\Big) ,
\end{equation*}
where the $\{\lambda_p^{\epsilon}(\theta) \}_{p=0}^\infty$ are the eigenvalues of $G_\theta[\epsilon b]$, labeled in increasing order, repeated with their order of multiplicity, and the bands are images of the Lipschitz non-negative functions $\theta\mapsto \lambda_p^{\epsilon}(\theta)$ on the interval $(-\frac12,\frac{1}{2}]$. Moreover, $\lambda_0^\epsilon(0) = 0$.

\item For any $p\in \N$, there exists $\varepsilon_{1}(b,p)\in (0,\epsilon_0]$ and $C_{b,p}$ such that we have
\begin{equation*}
 \dd \Big(\lambda^{\epsilon}_p\Big( ( -\tfrac{1}{2},\tfrac{1}{2} ] \Big),\lambda^{0}_p\Big(( -\tfrac{1}{2},\tfrac{1}{2} ]\Big)\Big) \leq C_{b,p} \epsilon, \quad \forall \varepsilon\in [0,\varepsilon_{1}).
\end{equation*}

\item The lower part of the spectrum of $G[\epsilon b]$ is purely absolutely continuous\footnote{For a precise definition of the absolutely continuous spectrum we refer to \cite[Chapter VII.2]{RS80}.}. More precisely, for any $M>0$, there exists $\varepsilon_{M}\in (0,\epsilon_0]$ such that for any $\varepsilon\in [0,\varepsilon_{M})$,
\begin{align*}
\sigma(G[\epsilon b])\cap [0,M]&=\sigma_{\rm ac}(G[\epsilon b])\cap [0,M] \\
\sigma_{\rm pp}(G[\epsilon b])\cap [0,M] &= \sigma_{\rm sc}(G[\epsilon b])\cap [0,M] = \emptyset.
\end{align*}
\end{enumerate}
\end{theorem}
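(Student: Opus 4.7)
My approach rests on the direct-integral decomposition outlined after \eqref{ev-pb-per}, analytic perturbation theory for self-adjoint operators, and a Thomas-type argument for absolute continuity. The three items (i)--(iii) are tackled in order, each building on the preceding step.

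For (i), I would start from the fact that the Bloch--Floquet transform $\cal{U}$ realizes a unitary equivalence between $G[\epsilon b]$ and the direct integral $\int^{\oplus}_{(-1/2,1/2]} G_\theta[\epsilon b]\dd\theta$. Each fiber $G_\theta[\epsilon b]$ is the Dirichlet--Neumann operator of an elliptic boundary-value problem on the bounded fundamental strip, so it is non-negative self-adjoint on $L^2(\T_{2\pi})$ with compact resolvent (its natural domain embeds compactly in $L^2(\T_{2\pi})$ via the elliptic regularity of \cite{Lannes}). Consequently its spectrum consists of a non-decreasing sequence $\{\lambda_p^{\epsilon}(\theta)\}_{p=0}^\infty\to +\infty$. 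Since the conjugation $G_\theta[\epsilon b]=e^{-i\theta x}G[\epsilon b]e^{i\theta x}$ depends analytically on $\theta$, the min-max characterization gives Lipschitz continuity of each band $\theta\mapsto\lambda_p^{\epsilon}(\theta)$, and the standard description of the spectrum of a direct integral yields $\sigma(G[\epsilon b])=\bigcup_p \lambda_p^{\epsilon}((-1/2,1/2])$. Finally, $\lambda_0^{\epsilon}(0)=0$ follows because $G_0[\epsilon b]$ annihilates constants.

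For (ii), the joint analyticity of $G_\theta[\epsilon b]$ in $(\epsilon,\theta)$---a consequence of the analyticity of the Dirichlet--Neumann operator with respect to bottom variations combined with the elliptic estimates of \cite{Lannes}---means that $\epsilon\mapsto G_\theta[\epsilon b]$ is a self-adjoint analytic family in Kato's sense, uniformly on any compact $\theta$-set. For each fixed $p$, only finitely many unperturbed bands intersect a neighborhood of $\lambda_p^{0}((-1/2,1/2])$, so Rellich's theorem \cite{Rellich} produces analytic eigenvalue branches with $O(\epsilon)$ deviation from the unperturbed ones; after relabeling in increasing order, one recovers the claimed Hausdorff estimate, with $\varepsilon_1(b,p)$ chosen small enough to keep the $p$-th band separated from its neighbors.

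For (iii), I would invoke the classical criterion that a direct integral of self-adjoint operators with non-constant analytic band functions has purely absolutely continuous spectrum (see \cite[Ch.~XIII]{RS78}). The flat-bottom dispersion $\kappa_q(\theta)=(q+\theta)\tanh(q+\theta)$ is strictly monotone, hence non-constant, on each interval where $q+\theta$ has constant sign. Given $M>0$, only finitely many branches $\lambda_p^{\epsilon}$ can take values in $[0,M]$, and by the analytic perturbation theory of step (ii) each such branch stays within $O(\epsilon)$ of its unperturbed counterpart while remaining piecewise real-analytic in $\theta$; for $\epsilon<\varepsilon_M$ small enough it cannot become identically constant. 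Applying the direct-integral criterion to the spectral projector on $[0,M]$ gives the claimed equality of the spectra together with the absence of point and singular continuous parts. \emph{The main obstacle} lies precisely in this last step: the ordered bands $\lambda_p^{\epsilon}$ are only Lipschitz, not globally analytic, across crossings, so non-constancy must be verified on the underlying analytic branches. Tracking these branches through crossings of multiplicity two---exactly the regime in which Bragg gaps open---is the delicate point, and is where the joint analyticity of $G_\theta[\epsilon b]$ in $(\epsilon,\theta)$ combined with the explicit strict monotonicity of $\kappa_q$ is essential.
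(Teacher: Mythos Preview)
Your proposal is correct and follows essentially the same route as the paper: direct-integral decomposition plus compact resolvent for (i), analytic perturbation theory for (ii), and the Reed--Simon Theorem~XIII.86 criterion applied to the analytic (not ordered) branches for (iii), with exactly the obstacle you flag. The paper differs only in the level of detail it supplies at two points you pass over quickly: it verifies the inclusion $\lambda_n(\theta,\epsilon)\in\sigma(G[\epsilon b])$ by an explicit Weyl-sequence construction adapted to the nonlocal operator (rather than citing the direct-integral spectrum description), and in (iii) it gives a short contradiction argument to show that only finitely many \emph{analytic} branches $\tilde\lambda_n^\epsilon$ ever dip below $M+\tfrac12$, which is the precise form of the crossing-tracking issue you identify.
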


The next results give conditions on the Fourier coefficients of $b$, defined as
\[
\widehat{b}_p= \frac1{2\pi}\int_0^{2\pi} b(x) e^{-ipx} \dd x,
\]
that ensure the opening of a gap that separates the double eigenvalues $\lambda^{0}_{2p-1}(0)=\lambda_{2p}^{0}(0)$ or $ \lambda^{0}_{2p}(\frac{1}{2})=\lambda_{2p+1}^{0}(\frac{1}{2}) $
corresponding to $b=0$. Let us denote
\begin{equation}\label{def-Fp}
F_p := \frac{\big(\frac{p}{2}\big)^2}{\cosh^2\big(\frac{p}{2}\big)} 
\end{equation}

\begin{theorem}[Gap opening of order $\epsilon$] \label{one_gap-order-eps}
Let $b\in C^2(\T_{2\pi})$ and $p\in \N$. There exist positive numbers $\epsilon_2(b,p)$ and $C_{b,p}$ such that the following holds true.
\begin{enumerate}[(i)]
 \item If $p>0$ and $\widehat{b}_{2p} \neq 0$, then, for all $\epsilon \in (0,\epsilon_2)$, the spectrum $\sigma(G[\epsilon b])$ has a gap: 
\[
\lambda_{2p}^{0}(0) - g^{-}_{2p,\epsilon} := \max_{-\frac12 < \theta\le \frac12}\lambda_{2p-1}^{\epsilon}(\theta)
 < \min_{-\frac12 < \theta\le \frac12} \lambda_{2p}^{\epsilon}(\theta) 
 =:
 \lambda_{2p}^{0}(0) + g^{+}_{2p,\epsilon}
\]
with 
\begin{equation*}
 \Big| g^{\pm}_{2p,\epsilon} - F_{2p} |\widehat{b}_{2p}|\epsilon \Big| 
 \le C_{b,p} \epsilon^{2}. 
\end{equation*}
\item If $\widehat{b}_{2p+1} \neq 0$, then, for all $\epsilon \in (0,\epsilon_2)$, the spectrum $\sigma(G[\epsilon b])$ has a gap:
\begin{equation*}
 \lambda_{2p}^{0}(\tfrac12) - g^{-}_{2p+1,\epsilon} := \max_{-\frac12 < \theta\le \frac12}\lambda_{2p}^{\epsilon}(\theta)
 < \min_{-\frac12 < \theta\le \frac12} \lambda_{2p+1}^{\epsilon}(\theta) 
 =:
 \lambda_{2p}^{0}(\tfrac12) + g^{+}_{2p+1,\epsilon}
\end{equation*}
with 
\begin{equation*}
 \Big| g^{\pm}_{2p+1,\epsilon} - F_{2p+1} |\widehat{b}_{2p+1}|\epsilon \Big| 
 \le C_{b,p} \epsilon^{2}.
\end{equation*}
\end{enumerate}
\end{theorem}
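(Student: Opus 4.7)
The plan is to invoke Kato--Rellich analytic perturbation theory near the doubly degenerate eigenvalue $\lambda_0:=\lambda_{2p}^0(0)=p\tanh(p)$ of $G_0[0]$, using the joint analyticity of $(\epsilon,\theta)\mapsto G_\theta[\epsilon b]$ already established in the paper. I argue case (i); case (ii) is strictly parallel, after relocating the degenerate point to $\theta=\tfrac12$, replacing the degenerate basis $\{e^{ipx},e^{-ipx}\}$ by $\{e^{ipx},e^{-i(p+1)x}\}$, and the coupling coefficient by $\widehat b_{2p+1}$. The eigenvalue $\lambda_0$ has multiplicity two with eigenspace $E=\vecspan\{e^{ipx},e^{-ipx}\}$ and is separated from the rest of $\sigma(G_0[0])$. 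Picking a small circle $\Gamma\subset\C$ around $\lambda_0$ avoiding the remaining spectrum, the Riesz projector $P(\epsilon,\theta)=\frac{1}{2\pi i}\oint_\Gamma(z-G_\theta[\epsilon b])^{-1}\dd z$ is jointly analytic, self-adjoint and of rank two in a bidisk around $(0,0)$. The two eigenvalues of $G_\theta[\epsilon b]$ close to $\lambda_0$ are then the eigenvalues of a Hermitian $2\times 2$ matrix $M(\epsilon,\theta)$ representing the compression of $G_\theta[\epsilon b]$ to $\mathrm{Range}\,P(\epsilon,\theta)$, expressed in the basis obtained by applying $P$ to $\{e^{\pm ipx}\}$.

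The expansion of $M$ combines two elementary pieces. The unperturbed part contributes $G_\theta[0]|_E=\operatorname{diag}(\kappa_p(\theta),\kappa_{-p}(\theta))$ with $\kappa_{\pm p}'(0)=\pm\alpha_p$ and $\alpha_p=\tanh(p)+p\operatorname{sech}^2(p)>0$. The first shape derivative $\partial_\epsilon G[0]b$ is computed by expanding the elliptic problem for the potential $\Phi$: writing $\Phi=\Phi_0+\epsilon\Phi_1+O(\epsilon^2)$, the Neumann condition at $z=-h+\epsilon b$ produces an inhomogeneous Neumann condition $\partial_z\Phi_1|_{z=-h}=b'\partial_x\Phi_0|_{z=-h}-b\,\partial_z^2\Phi_0|_{z=-h}$ for $\Phi_1$ on the flat strip; solving this and evaluating $\partial_z\Phi_1$ at $z=0$ yields
\begin{equation*}
\bigl(\partial_\epsilon G[0]\,b\bigr)e^{ikx}=-\sum_{l\in\Z}\widehat{b}_{l-k}\,\frac{kl}{\cosh(|k|)\cosh(|l|)}\,e^{ilx}.
\end{equation*}
The $(e^{-ipx},e^{ipx})$ matrix element is therefore $F_{2p}\widehat b_{-2p}$ (with conjugate on the transpose), and one obtains
\begin{equation*}
M(\epsilon,\theta)=\lambda_0 I+\theta\begin{pmatrix}\alpha_p&0\\0&-\alpha_p\end{pmatrix}+\epsilon\begin{pmatrix}\tau&F_{2p}\widehat b_{2p}\\F_{2p}\overline{\widehat b_{2p}}&\tau\end{pmatrix}+O(\epsilon^2+\epsilon\theta+\theta^2),
\end{equation*}
with $\tau\in\R$ a common diagonal shift arising from $\widehat b_0$. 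Diagonalizing,
\begin{equation*}
\mu_\pm(\epsilon,\theta)=\lambda_0+\tau\epsilon\pm\sqrt{\alpha_p^{2}\theta^{2}+F_{2p}^{2}|\widehat b_{2p}|^{2}\epsilon^{2}}+O\bigl((\epsilon+\theta)^2\bigr),
\end{equation*}
so $\mu_+$ attains its minimum and $\mu_-$ its maximum at $\theta=0$, with values $\lambda_0\pm F_{2p}|\widehat b_{2p}|\epsilon+O(\epsilon^2)$ (the $\tau\epsilon$ contribution being a common shift of both branches, absorbed in the $O(\epsilon^2)$ remainder, or vanishing under the mean-zero normalisation $\widehat b_0=0$).

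To promote the local $\theta$-extrema into global ones, one observes that outside a fixed neighborhood $I$ of $0$ the unperturbed eigenvalues $\kappa_{\pm p}(\theta)$ stand at distance $\geq\delta(I)>0$ from $\lambda_0$; the Lipschitz continuity in $\epsilon$ from Theorem~\ref{union-of-bands}(ii) keeps the perturbed bands $\lambda_{2p-1}^\epsilon$ and $\lambda_{2p}^\epsilon$ at distance $\geq\delta(I)/2$ from $\lambda_0$ for $\epsilon$ small, so the extrema over $(-\tfrac12,\tfrac12]$ are attained inside $I$ and coincide with the local values computed above, establishing case (i). The main technical difficulty is the explicit derivation of the shape derivative $\partial_\epsilon G[0]b$, which is where the constant $F_{2p}=p^2/\cosh^2(p)$ and the selected Fourier mode $\widehat b_{2p}$ appear; this requires careful elliptic estimates for the perturbed Neumann problem on a thin periodic strip, in the spirit of \cite{Lannes}. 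The other delicate point is the uniform control of the remainder $R(\epsilon,\theta)$ in the bidisk, for which the joint analyticity of $P(\epsilon,\theta)$ is indispensable.
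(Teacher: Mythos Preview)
Your approach via the Riesz projector and Lyapunov--Schmidt reduction to a $2\times 2$ matrix is a legitimate alternative to the paper's method, which instead constructs explicit quasimodes in the straightened strip (Proposition~4.2, Proposition~4.3) and invokes an approximation lemma (Lemma~4.1) to locate true eigenvalues near the approximate ones. Both routes arrive at the same $2\times 2$ matrix and the same splitting formula $\pm\sqrt{K_p^2\delta^2+F_{2p}^2|\widehat b_{2p}|^2}$; your version is more abstract and avoids the explicit elliptic Ansatz, while the paper's version keeps everything at the level of PDE estimates and never appeals to joint analyticity of the full resolvent.

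There is, however, a genuine gap in your matching argument. Your eigenvalue formula carries a remainder $O\bigl((\epsilon+\theta)^2\bigr)$, which is only small compared to the gap size $\sim\epsilon$ when $|\theta|\lesssim\epsilon$. You then jump directly to ``outside a fixed neighborhood $I$'', invoking Theorem~\ref{union-of-bands}(ii). But for $\theta$ near the boundary of $I$ (say $|\theta|\sim\eta$ with $\eta$ fixed small) the remainder is of size $\eta^2$, which swamps the $\epsilon$-gap, so your claim that the extrema of $\mu_\pm$ occur at $\theta=0$ is not justified as written. The paper handles precisely this intermediate regime via Proposition~3.8: for $d_{p,1}\epsilon\le\theta\le\tfrac12-d_{p,1}\epsilon$ the eigenvalues are simple and satisfy $\lambda_{2p}^\epsilon(\theta)\ge\lambda_{2p}^0(0)+d_{p,2}\epsilon$ with $d_{p,2}\ge F_{2p}|\widehat b_{2p}|$, which keeps the band edges away from the gap. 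In your framework the fix is easy: exploit that $M(\epsilon,\theta)=\operatorname{diag}(\kappa_p(\theta),\kappa_{-p}(\theta))+\epsilon B(\epsilon,\theta)$ with $B$ bounded on the bidisk, so that for $d\epsilon\le|\theta|\le\eta$ one has $\mu_+\ge\kappa_p(\theta)-C\epsilon\ge\lambda_0+(\alpha_p d-C)\epsilon$, and choose $d$ large. But this two-scale split must be stated explicitly; as written, your argument does not close.
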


If $\widehat{b}_{2p} =0$ and if another condition on the Fourier coefficients of $b$ is satisfied, then a gap of size $\epsilon^2$ occurs.
Let us denote 
\begin{align*}
J_p(b) &= \frac{p^2}{\cosh(p)^2}\sum_{k \notin \{0,p,-p\}}\frac{k^2-\kappa_k(0) \kappa_p(0) }{\kappa_p(0) - \kappa_k(0)}|\widehat{b}_{k-p}|^2 \\
S_p(b) &= \frac{p^2}{\cosh(p)^2}\sum_{k \notin \{0,p,-p\}}\frac{k^2-\kappa_k(0)\kappa_p(0)}{\kappa_p(0) - \kappa_k(0)}\widehat{b}_{k+p}\overline{\widehat{b}_{k-p}}.
\end{align*}

\begin{theorem}[Gap opening of $\epsilon^2$] \label{one_gap-order-eps2}
Let $b\in C^2(\T_{2\pi})$ and $p\in \N^*$. There exist positive numbers $\epsilon_3(b,p)$ and $C_{b,p}$ such that the following holds true.
If $\widehat{b}_{2p} = 0$ and $S_p(b) \ne 0$, then, for all $\epsilon \in (0,\epsilon_3)$, the spectrum $\sigma(G[\epsilon b])$ has a gap:
 \begin{multline*} 
 \lambda_{2p}^{0}(0) + J_p 
 \epsilon^2- g^{-}_{2p,\epsilon} := \max_{-\frac12 < \theta\le \frac12}\lambda_{2p-1}^{\epsilon}(\theta)\\
 < \min_{-\frac12 < \theta\le \frac12} \lambda_{2p}^{\epsilon}(\theta) 
 =:
 \lambda_{2p}^{0}(0) + J_p \epsilon^2 + g^{+}_{2p,\epsilon}
\end{multline*}
with 
\begin{equation*} 
\Big| g ^{\pm}_{2p,\epsilon} - |S_p| \epsilon^2 
\Big| \le C_{b,p} \epsilon^{3} .
\end{equation*}
\end{theorem}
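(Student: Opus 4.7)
The proof follows the same analytic-perturbation framework as Theorem~\ref{one_gap-order-eps}, but carries the expansion in $\epsilon$ one order further. At $(\theta,\epsilon)=(0,0)$ the fiber $G_0[0]$ has the double eigenvalue $\kappa_p(0) = p\tanh(p)$ with two-dimensional eigenspace $V_p := \mathrm{span}\{e^{ipx}, e^{-ipx}\}$. By the joint analyticity of $G_\theta[\epsilon b]$ in $(\theta,\epsilon)$, Kato--Rellich theory provides a rank-two Riesz projection $P_\epsilon(\theta)$ onto the spectral subspace of the two eigenvalues bifurcating from $\kappa_p(0)$, analytic in a neighborhood of $(0,0)$. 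The analysis then reduces to studying the $2\times 2$ effective matrix $M(\theta,\epsilon) := P_\epsilon(\theta)\, G_\theta[\epsilon b]\, P_\epsilon(\theta)$ on $\mathrm{Ran}\, P_\epsilon(\theta)$. We may assume without loss of generality that $\widehat{b}_0 = 0$, since a constant shift of the bottom can be absorbed in the reference depth $h$.

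First, I would expand $M(0,\epsilon) = M^{(0)} + \epsilon M^{(1)} + \epsilon^2 M^{(2)} + O(\epsilon^3)$ using the Taylor series $G_0[\epsilon b] = G_0[0] + \epsilon G^{(1)} + \epsilon^2 G^{(2)} + O(\epsilon^3)$ in the bottom perturbation. In the basis $\{e^{ipx}, e^{-ipx}\}$ of $V_p$, the entry $\langle e^{iqx}, G^{(1)} e^{iq'x}\rangle$ is proportional to the Fourier coefficient $\widehat{b}_{q'-q}$, so the diagonal entries of $M^{(1)}$ involve $\widehat{b}_0 = 0$ and the off-diagonal entries involve $\widehat{b}_{\pm 2p} = 0$. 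Hence $M^{(1)} \equiv 0$ and no splitting occurs at order $\epsilon$; one must proceed to $O(\epsilon^2)$.

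At second order, the standard degenerate Rayleigh--Schr\"odinger formula gives
\[
 M^{(2)}_{q,q'} = \langle e^{iqx}, G^{(2)} e^{iq'x}\rangle + \sum_{k \notin \{p,-p\}} \frac{\langle e^{iqx}, G^{(1)} e^{ikx}\rangle\,\langle e^{ikx}, G^{(1)} e^{iq'x}\rangle}{\kappa_p(0)-\kappa_k(0)}, \quad q,q' \in \{p,-p\}.
\]
Plugging in the explicit Fourier matrix elements of $G^{(1)}$ and $G^{(2)}$ obtained from the shape-derivative expansion of the Dirichlet--Neumann operator around a flat bottom, the $k=0$ term drops out because $G^{(1)}$ annihilates the constant function ($G[\epsilon b]\mathbf{1}=0$ for every $\epsilon$, hence by self-adjointness $\langle \mathbf{1}, G^{(1)} f\rangle = 0$ for all $f$). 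A direct algebraic simplification, combining the $G^{(2)}$ contribution with the Rayleigh--Schr\"odinger sum, produces the common prefactor $p^2/\cosh^2(p) = p^2 - \kappa_p(0)^2$ and the universal numerator $k^2 - \kappa_k(0)\kappa_p(0)$, yielding diagonal entries $M^{(2)}_{p,p} = M^{(2)}_{-p,-p} = J_p(b)$ and off-diagonal entry $M^{(2)}_{-p,p} = S_p(b)$. Diagonalizing the resulting matrix produces the two eigenvalues $\kappa_p(0) + J_p\epsilon^2 \pm |S_p|\epsilon^2 + O(\epsilon^3)$, a pointwise gap of width $2|S_p|\epsilon^2$ at $\theta=0$ whenever $S_p \neq 0$.

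To promote this $\theta=0$ splitting into a uniform band gap over $\theta \in (-\tfrac12,\tfrac12]$, I would combine the analytic dependence of $M(\theta,\epsilon)$ on $\theta$ with two observations: for $|\theta|$ small, the linear splitting of $\kappa_{\pm p}(\theta)$ at $\theta=0$ reinforces rather than closes the perturbative gap, while for $|\theta|$ bounded away from $0$ the unperturbed eigenvalues are already separated by an $\epsilon$-independent amount, preventing any crossing for small $\epsilon$. The quantitative error bound $|g^\pm_{2p,\epsilon} - |S_p|\epsilon^2| \leq C_{b,p}\epsilon^3$ then follows from the quasi-mode bound of \cite[Proposition~5.1]{BKP15}, exactly as in the proof of Theorem~\ref{one_gap-order-eps}. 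The main technical obstacle is the algebraic step identifying $J_p$ and $S_p$: one must carefully combine the second-order shape derivative $G^{(2)}$ with the Rayleigh--Schr\"odinger sum to extract the universal numerator $k^2 - \kappa_k(0)\kappa_p(0)$ in the clean form stated; once this is done, the remainder of the proof is standard analytic perturbation theory plus the uniform $\theta$-control already established in Theorem~\ref{one_gap-order-eps}.
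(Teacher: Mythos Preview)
Your approach is genuinely different from the paper's. Instead of constructing explicit quasimodes in the two-dimensional straightened domain $S$ by solving a hierarchy of elliptic boundary problems for $U^0_{p\pm}$, $U'_{p\pm}$, $U''_{p\pm}$ (Lemma~\ref{formula_U_prime_ordre_2} and the system for $U''_{p\pm}$), you reduce abstractly on $L^2(\T_{2\pi})$ via a Riesz projection and the degenerate Rayleigh--Schr\"odinger formula. The paper arrives at the same $2\times 2$ matrix (called $N_p$ there) by computing boundary integrals involving the coefficient matrices $Q_1,Q_2$ of the straightened operator; the algebraic collapse of the $G^{(2)}$ contribution into the sum is exactly identity \eqref{claim} of Appendix~C, which is the ``main technical obstacle'' you flag. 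Your route is conceptually cleaner and more portable; the paper's route makes the identification of $J_p,S_p$ a mechanical PDE computation once Lemma~\ref{formula_U_prime_ordre_2} is in hand.

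Your uniform $\theta$-control is thinner than the paper's. The paper does not pass from the $\theta=0$ splitting to a global gap by a single small/large-$\theta$ dichotomy: it uses \emph{three} regions. For $\theta\in[0,d_{p,5}\epsilon^2]$ it applies the second-order quasimode bound (Proposition~\ref{estimate_quasimodes_order_epsilon_2}); for $\theta\in[d_{p,5}\epsilon^2,d_{p,1}\epsilon]$ it invokes a separate Proposition~\ref{single-ev-higher-order}, which reuses the \emph{first-order} quasimode estimate of Section~\ref{section:4} in the degenerate case $\widehat b_{2p}=0$, giving $\lambda'_{p\pm}(\delta)=\pm K_p\delta$ and hence a separation $K_p\theta-C_p\epsilon^2\geq M_p\epsilon^2$; only for $\theta\in[d_{p,1}\epsilon,\tfrac12]$ does Proposition~\ref{simple-ev} apply. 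Your sentence ``the linear splitting reinforces rather than closes the gap'' is the right intuition for the intermediate region, but making it quantitative needs precisely that the first-order quasimode error is $O(\epsilon^2)$. In your matrix framework this amounts to checking that the $\theta$-linear part of $M(\theta,\epsilon)$ is purely diagonal ($\pm K_p\theta$) with no off-diagonal $O(\epsilon\theta)$ term; once that is said, the eigenvalues of $M(\theta,\epsilon)$ are $\kappa_p(0)+J_p\epsilon^2\pm\sqrt{K_p^2\theta^2+|S_p|^2\epsilon^4}+O(\epsilon^3)$ uniformly, and the three-region split becomes unnecessary---a potential simplification your outline hints at but does not state.
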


If $\widehat{b}_{2p+1} = 0$, similar conditions on the Fourier coefficients of $b$ lead to the opening of a gap of order $\epsilon^2$ near $\theta =\pm \frac{1}{2}$.

\begin{figure}[h!t]
\centering{(a){\includegraphics[height=0.35\textwidth]{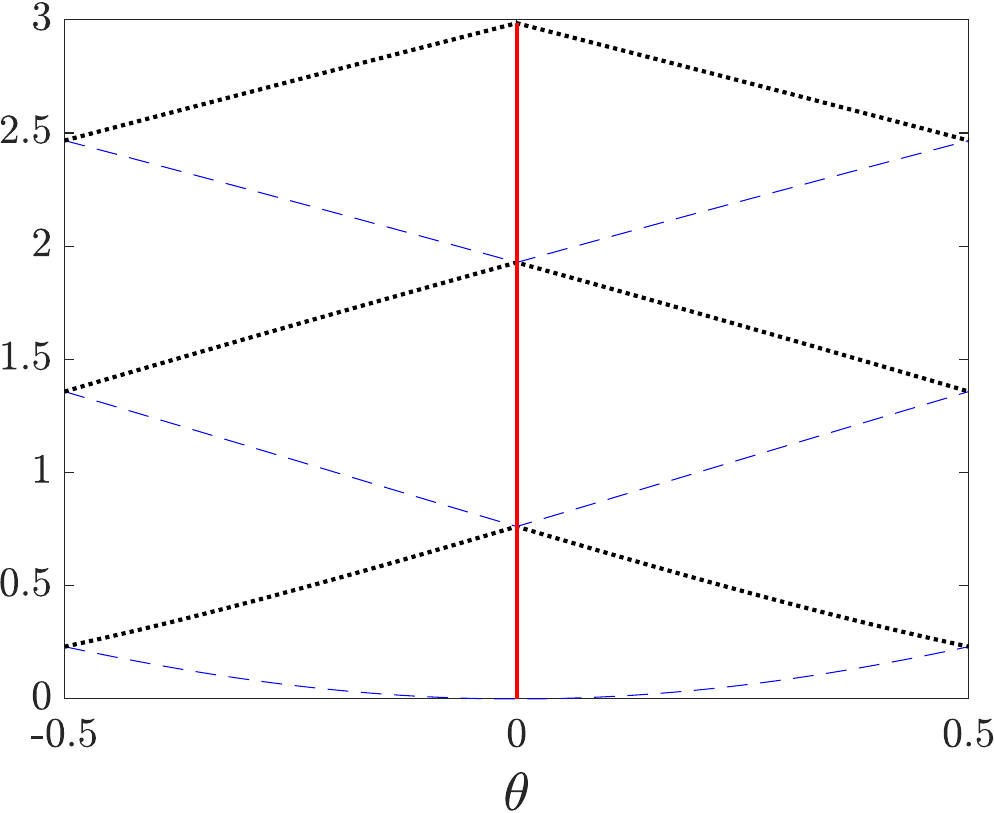}}
 \ \ (b){\includegraphics[height=0.35\textwidth]{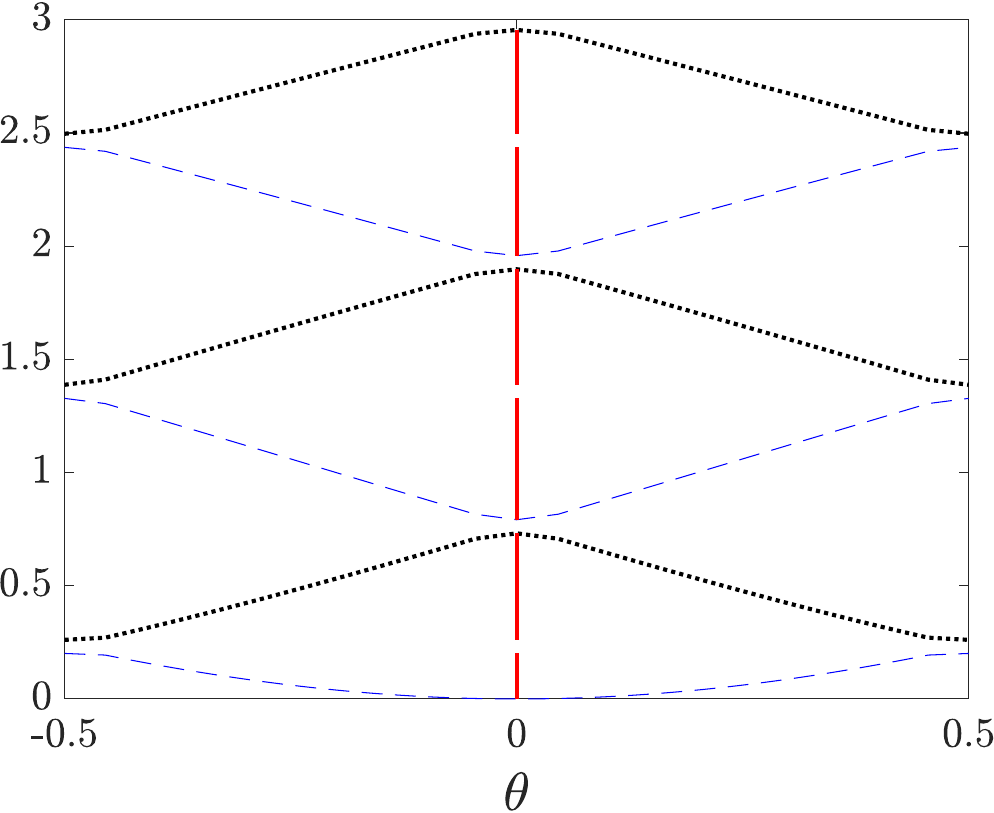}}}
\caption{Sketch of the first six eigenvalues in order of magnitude: (a) flat bottom $\epsilon=0$; (b) in the presence of a small generic bottom perturbation $\epsilon>0$. The dashed blue (resp. dot black) curve represents $\lambda_{2p}^\epsilon(\theta)$ (resp. $\lambda_{2p+1}^\epsilon(\theta)$). The spectra of the operators $G[\epsilon b]$ are represented by the solid red intervals.}
\label{fig}
\end{figure}

The paper is organized as follows. Section~\ref{sec:2} is devoted to the basic properties of the Dirichlet-Neumann operator. We first introduce the Bloch-Floquet transform which allows us to represent any $L^2$-function as the integral over $(-\frac{1}{2},\frac{1}{2}]$ of $\theta$-periodic functions. Following \cite{RS78, K16}, we express $G[\epsilon b]$ as a direct integral decomposition of $G_\theta[\epsilon b]$.
We then write the variational formulation of the elliptic problem associated to $G_\theta[\epsilon b]$ and to its resolvent $(1+G_\theta [\epsilon b])^{-1}$. 
Section~\ref{sec:3} is devoted to general properties of the spectrum of $G[\epsilon b]$ and $G_\theta[\epsilon b]$. An important property of the Dirichlet-Neumann operator is that it is analytic with respect to the bottom \cite{Lannes}. We extend it to the analyticity of its resolvent with respect to $\epsilon$ and $\theta$. This result is central for the description of the spectrum of the operator $G_\theta[\epsilon b]$. In Section~\ref{sec:3.3}, using general properties of perturbation of self-adjoint operators \cite{L22,Rellich}, we show that for $\theta$ not too close to $0,\pm \frac{1}{2}$, the spectrum of $G_{\theta}[\epsilon b]$ is composed of simple eigenvalues that are close to those of $G_{\theta}[0]$ and give estimates on their location. It will be useful later to ensure gaps constructed in Section~\ref{section:4} remain open. We also give a first description of the spectrum of $G_{\theta}[\epsilon b]$ near double eigenvalue of $G_{\theta}[0]$.
We then prove Theorem~\ref{union-of-bands} that describe the spectrun of $G[\epsilon b] $ as unions of bands.
In Section~\ref{section:4}, we show necessary and sufficient conditions for the opening of a gap at $\theta=0,1/2$ of order $\cal O(\epsilon)$. The matching the inner and outer asymptotics on an overlap region leads to the opening of a gap of order $\epsilon$ (Theorem~\ref{one_gap-order-eps}). 
 In particular, assuming that $\widehat{b}_k \ne 0$ for all $|k|<N$ leads to the opening of $N$ gaps. 
In Section~\ref{section:5}, we extend the above analysis to construct gaps of order $\epsilon^2$. 

Our method provides necessary and sufficient conditions on the bottom topography that lead to opening of gaps of order $\epsilon$ and $\epsilon^2$. We believe that a higher order calculation would lead to opening at higher order in $\epsilon$. Because the smallness $\epsilon$ depends on $p$, we are only able to exhibit bottom configurations that lead to the opening of a finite number of gaps. The opening of an infinite number of gaps is an open problem.

We conclude the introduction with some notations:
\begin{align*}
\Omega_\epsilon &:= \{(x,z) \in \R\times \mathbb{R} \; ; \; -1 + \epsilon b(x) < z < 0\}, \\
\omega_\epsilon &:= \{(x,z) \in \T_{2\pi} \times \mathbb{R} \; ; \; -1 + \epsilon b(x) < z < 0\}, \\
S &:= \T_{2\pi}\times[-1,0], \qquad
\gamma := \T_{2\pi} \times\{0\} .
\end{align*}
We denote $\T_{2\pi}:=\R/{2\pi\Z}$ the flat torus of length $2\pi$ and $f\in H^s(\T_{2\pi})$ means that $f\in H^s_{\loc}(\R)$ and is $2\pi$-periodic (namely, $f(x+2\pi)=f(x)$ for a.e. $x\in \R$), whereas a function $F$ belonging in $H^s(\omega_\epsilon)$ or in $H^s(S)$ means that $\Phi$ is periodic in the horizontal direction: $F(x+2\pi,z)=F(x,z)$.

\section{The Bloch-Floquet transform of the Dirichlet-Neumann operator}\label{sec:2}

\subsection{Basic properties of the Dirichlet-Neumann operator}\label{subsec:21}

The goal is to study the Dirichlet-Neumann operator $G[\epsilon b]:=G[0,\epsilon b]$ which naturally appears when solving the linearized water wave equations in the domain $\Omega_\epsilon $, where $b\in C^2(\R)$, bounded, and $\epsilon$ satisfies $\epsilon \|b\|_{L^\infty} <1$. Without loss of generality, we assume $h=1$ and $\int_0^{2\pi} b(x) dx =0$.
Note that this domain is bounded in the vertical direction which allows to have a Poincaré inequality (see \cite[Equation (2.8)]{Lannes} or Lemma~\ref{lem-Poincare}) and to solve the following elliptic problem by the Lax-Milgram theorem.

For any $\xi\in \dot H^1(\R)$, let $\Phi$ be the unique variational solution of
\begin{equation}\label{def-G}
 \begin{cases}
 \Delta \Phi =0 \quad \text{in }\Omega_\epsilon, \\
 \Phi_{\vert_{z=0}} = \xi, \quad \partial_n\Phi_{\vert_{z=-1+\epsilon b}} = 0,
\end{cases}
\end{equation}
see \cite[Proposition~2.9]{Lannes}.
From $\Phi$, we define the Dirichlet-Neumann operator $G[\epsilon b]$ as $G[\epsilon b]\xi = \partial_n\Phi_{\vert_{z=0}}$.

By elliptic regularity, $G[\epsilon b]$ is a continuous operator from $\dot H^1(\R)$ to $L^2(\R)$. 
It is positive semi-definite, symmetric for the $L^2$ scalar product (see \cite[Proposition~3.9]{Lannes}), and it is also self-adjoint on $L^2(\R)$ with domain $H^1(\R)$. This property was shown in \cite{RT11} for flat bottom using symbolic analysis and in \cite[Appendix A.2]{Lannes} for $b\in H^{t_0+1}(\R)$ for $t_0>1/2$. Looking at the details the proof in \cite[Proposition~A.14]{Lannes}, we notice that the decay of $b$ at infinity is not used and that the proposition holds true for periodic $b$ smooth enough, in $C^2$ for instance.

Another well-known property for the Dirichlet-Neumann operator on a Riemannian manifold (\cite[Section 7.11]{Taylor}) and proved in \cite[Corollary 3.6]{RT11} for the fluid domain $\Omega_\epsilon$ is that it is a first-order elliptic operator. This property is not used here but just recalled for sake of completeness.

We conclude this subsection with the standard Poincar\'e inequality when the domain is bounded in one direction. 
Note that important points in the following inequality are that the constant $C_P$ does not depend on $\epsilon$, and the coefficient in front of $\|\partial_z \phi\|_{L^2}$ is strictly smaller than 1. 

\begin{lemma}\label{lem-Poincare} 
There exist $\varepsilon_{0},C_{P}>0$ such that, for any $\varepsilon\in [-\epsilon_0,\varepsilon_{0}]$ and all $\phi\in H^1(\omega_\epsilon)$ 
 \begin{equation} \label{ineq-poincare}
\|\phi\|_{L^2(\omega_\epsilon)} \leq C_{P}\|\phi(\cdot,0)\|_{L^2(0,2\pi)} +\frac34 \|\partial_z \phi\|_{L^2(\omega_\epsilon)}.
 \end{equation}
\end{lemma}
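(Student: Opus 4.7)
Since the domain $\omega_\epsilon$ is bounded in the vertical direction and the trace on the top face $\gamma = \T_{2\pi}\times\{0\}$ is at our disposal, the estimate is essentially a fiberwise one-dimensional Poincar\'e-type inequality, transplanted onto the curved-bottom domain. My plan is to reduce to smooth $\phi$ by density of $C^1(\overline{\omega_\epsilon})$ in $H^1(\omega_\epsilon)$, integrate $\partial_z\phi$ from the top boundary, and then split the resulting pointwise identity using a weighted Young inequality whose parameter is tuned at the end to produce a coefficient strictly below $1$ in front of $\|\partial_z\phi\|_{L^2(\omega_\epsilon)}$.

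Concretely, for $\phi\in C^1(\overline{\omega_\epsilon})$ and $(x,z)\in \omega_\epsilon$, I would write $\phi(x,z) = \phi(x,0) - \int_z^0 \partial_z\phi(x,s)\dd s$, apply the elementary inequality $(a+b)^2 \leq (1+\eta) a^2 + (1+\eta^{-1}) b^2$ with a free parameter $\eta>0$, and use Cauchy--Schwarz vertically, namely $\bigl|\int_z^0 \partial_z\phi(x,s)\dd s\bigr|^2 \leq |z|\int_{-1+\epsilon b(x)}^0 |\partial_z\phi(x,s)|^2\dd s$. Integrating over $\omega_\epsilon$ via Fubini, and using $\int_{-1+\epsilon b(x)}^0 \dd z = 1-\epsilon b(x)$ together with $\int_{-1+\epsilon b(x)}^0 |z|\dd z = (1-\epsilon b(x))^2/2$ and the bound $|1-\epsilon b(x)|\leq 1+\epsilon_0\|b\|_{L^\infty}$, this yields
\begin{equation*}
\|\phi\|_{L^2(\omega_\epsilon)}^2 \leq (1+\eta)(1+\epsilon_0\|b\|_{L^\infty})\|\phi(\cdot,0)\|_{L^2(0,2\pi)}^2 + \frac{(1+\eta^{-1})(1+\epsilon_0\|b\|_{L^\infty})^2}{2}\|\partial_z\phi\|_{L^2(\omega_\epsilon)}^2.
\end{equation*}
Taking square roots and using $\sqrt{\alpha+\beta}\leq \sqrt\alpha + \sqrt\beta$ then produces the desired two-term structure.

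The delicate point is to fit the derivative coefficient into the prescribed $3/4$. In the double limit $\eta\to \infty$, $\epsilon_0 \to 0$, this coefficient tends to $1/\sqrt{2} \approx 0.707$, which is strictly less than $3/4$. I would therefore first choose $\eta$ large enough that $\sqrt{(1+\eta^{-1})/2} < 3/4$, and then shrink $\epsilon_0>0$ so that the multiplicative correction $(1+\epsilon_0\|b\|_{L^\infty})$ still keeps the product below $3/4$; setting $C_P := \sqrt{(1+\eta)(1+\epsilon_0\|b\|_{L^\infty})}$ then yields an $\epsilon$-independent boundary constant. The main conceptual obstacle is precisely the need for a coefficient strictly less than $1$ (needed downstream for Lax--Milgram coercivity when solving \eqref{def-G}), which is achievable only because of the quantitative slack between $1/\sqrt 2$ and $3/4$; a naive estimate splitting $(a+b)^2 \leq 2a^2+2b^2$ would yield a coefficient exceeding $1$ and fail exactly the property that makes the lemma useful.
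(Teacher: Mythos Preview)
Your proof is correct and follows essentially the same route as the paper's: the fundamental-theorem-of-calculus identity from the top boundary, Cauchy--Schwarz in the vertical variable, a weighted Young inequality $(a+b)^2\le (1+\eta)a^2+(1+\eta^{-1})b^2$, and the observation that the derivative coefficient tends to $1/\sqrt2<3/4$ in the limit. The paper writes the Young inequality as $|\phi|^2\le C_\delta|\phi(x,0)|^2+(1+\delta)A^2$ and sends $\delta\to0$, which is exactly your parametrization with $\delta=\eta^{-1}$.
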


\begin{proof}
For $\phi \in \mathcal{C}^1(\omega_{\varepsilon})$ we write
\begin{equation*}
\phi(x,z) = \phi(x,0) - \int_z^0 \partial_z \phi(x,w)\dd w
\end{equation*}
 thus
\begin{equation*}
|\phi(x,z)| \leq |\phi(x,0)| + |z|^{\frac12}\Big(\int_{-1+\epsilon b(x)}^0 |\partial_z \phi(x,w)|^2\dd w\Big)^{\frac12}=:|\phi(x,0)| + A(x,z).
\end{equation*}
where
\[
\|A\|_{L^2(\omega_\epsilon)}^2
 \leq \frac12 (1+|\epsilon| \|b\|_{L^\infty})^2 \| \partial_z \phi \|_{L^2(\omega_\epsilon)}^2 
\]
which would end the proof if $\phi$ vanishes on the boundary.
For all $\delta>0$, there is $C_{\delta}>0$ such that
\[
|\phi(x,z)|^2 \leq |\phi(x,0)|^2 + 2 |\phi(x,0)| A(x,z)+A(x,z)^2 \leq C_{\delta}|\phi(x,0)|^2 +(1+\delta) A(x,z)^2
\]
hence, 
\[
\|\phi\|_{L^2(\omega_{\varepsilon})} \leq \sqrt{C_{\delta}(1+|\epsilon| \|b\|_{L^\infty})}\|\phi(\cdot,0)\|_{L^2(0,2\pi)}+\sqrt{\frac{1+\delta}2}(1+|\epsilon| \|b\|_{L^\infty}) \| \partial_z \phi \|_{L^2(\omega_{\varepsilon})}.
\]
Choosing $\varepsilon_{0}$ and $\delta$ small enough, leads to \eqref{ineq-poincare}, because $1/\sqrt2 < 3/4$.
\end{proof}

\subsection{Bloch-Floquet transform}\label{sec:Bloch}

The Bloch-Floquet transform, also referred as Gelfand transform, is defined on $\cal S(\R)$ as 
\[
f(x) \mapsto \cal{U} f(x,\theta):= \sum_{n=-\infty}^{\infty} f(x+2\pi n) e^{-2\pi i\theta n}. 
\]
It satisfies
\[
 \cal{U} f(x+2\pi,\theta) = e^{2\pi i\theta}\cal{U} f(x,\theta)
\]
and is uniquely extendable to a unity operator from $L^2(\R)$ in $L^2((-\frac{1}{2},\frac{1}{2}];L^2(0,2\pi))$ by Fubini and Plancherel theorems (see for instance \cite[Page 290]{RS78}). For $f\in \cal S(\R)$,
\[
f(x) = \int_{-1/2}^{1/2} \cal{U} f(x,\theta)\dd \theta \quad \text{ in }\R.
\]
Denoting $g=\cal U f$ with $f\in \cal S(\R)$, there is an explicit formula for $\cal U^\ast g$ only in terms of the values of $g$ in $(-1/2,1/2]\times[0,2\pi]$:
\[
(\cal U^\ast g)(x+2\pi n)=\int_{-1/2}^{1/2} e^{2i \pi n\theta} g(x,\theta)\dd \theta \text{ for all }(x,n)\in [0,2\pi)\times \Z.
\]
This definition of the Bloch-Floquet transform is convenient because it implies that $\cal U^\ast g$ is an isometry from $L^2((-\frac{1}{2},\frac{1}{2}];L^2(0,2\pi))$ in $L^2(\R)$.
The definition of $\cal U^\ast$ for $x$ outside $[0,2\pi)$ comes from the fact that we do not precise the periodicity condition of $g$ with respect to $x$ when we only considered $g$ as an element of $L^2((-\frac{1}{2},\frac{1}{2}];L^2(0,2\pi))$.
An important consequence of the isometry property is that we can decompose any $f\in L^2(\R)$ as an integral of $\theta$-periodic functions (namely, $g(x+2\pi,\theta)=e^{2\pi i\theta} g(x,\theta)$). For more details, we refer to \cite[Section XIII.16]{RS78} and \cite[Section 4.2]{K16}.
Another possible choice of decomposition is based on Fourier transforms as in \cite{APR11}, which is well-adapted to $d\geq 2$.

With our choice of direct integral decomposition of functional spaces
\begin{equation}\label{Floquet-decompspace}
 L^2(\R)=\int_{(-\frac{1}{2},\frac{1}{2}]}^\bigoplus L^2_\theta \dd \theta,
\end{equation}
the goal is to decompose the Dirichlet-Neumann operator $G[\epsilon b]$ into operators $G_\theta[\epsilon b]$ acting, for all $\theta$, on periodic functions.

Let $b\in C^2(\T_{2\pi})$, and for any $\phi\in H^1(\T_{2\pi})$, let $\Phi$ be the unique\footnote{The existence and the uniqueness of $\Phi$ is proved in the proof of Theorem~\ref{theo-def-Gtheta}.} variational solution of
\begin{equation}\label{D2N-theta}
 \begin{cases}
 (-\Delta -2i\theta \partial_x +\theta^2 ) \Phi =0 \quad \text{in }\omega_\epsilon , \\
 \Phi_{\vert_{z=0}} = \phi, \quad (\partial_n+i\theta n_x )\Phi_{\vert_{z=-1+\epsilon b}} = 0,
\end{cases}
\end{equation}
where $n_{x}$ denotes the horizontal component of the outward normal vector $n$.

\begin{theorem}\label{theo-def-Gtheta}
There is $\epsilon_0>0$ such that, for all $\theta\in [- \frac{1}{2}- \epsilon_0,\frac{1}{2}+\epsilon_0]$ and $\epsilon\in [-\varepsilon_{0},\epsilon_0]$, the linear operator $G_\theta[\epsilon b]$ defined as
\begin{equation} \label{DN0-theta-def}
G_\theta[\epsilon b]\phi = \partial_n\Phi_{\vert_{z=0}} \in L^2(\T_{2\pi})
\end{equation}
is well defined on $H^1(\T_{2\pi})$, closed, symmetric, positive semi-definite and bounded uniformly with respect to $\theta\in [-\frac12- \epsilon_0,\frac12+\varepsilon_{0}]$ and $\varepsilon\in [-\varepsilon_{0},\varepsilon_{0}]$.
\end{theorem}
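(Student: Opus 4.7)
The plan is to realize $\Phi$ as the unique solution of a variational problem via Lax--Milgram, then read off every property claimed of $G_\theta[\epsilon b]$ from the associated sesquilinear form, using Lemma~\ref{lem-Poincare} for coercivity and elliptic regularity to promote the normal trace to $L^2$.

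Observe first the factorization $-\Delta-2i\theta\partial_x+\theta^2 = D_\theta^{\ast} D_\theta$ with $D_\theta := \nabla + i\theta e_x$, and that the bottom boundary condition reads $n\cdot D_\theta\Phi = 0$. Introduce the sesquilinear form
\[
 a_{\theta,\epsilon}(\Phi,\Psi) := \int_{\omega_\epsilon} D_\theta\Phi \cdot \overline{D_\theta\Psi} \dd x\dd z,
\]
so that \eqref{D2N-theta} amounts to: find $\Phi\in H^1(\omega_\epsilon)$ with $\Phi_{\vert_{z=0}}=\phi$ and $a_{\theta,\epsilon}(\Phi,\Psi)=0$ for every $\Psi\in V:=\{v\in H^1(\omega_\epsilon):v_{\vert_{z=0}}=0\}$. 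After lifting $\phi$ by a uniform $H^1$-extension $\widetilde\phi$ of $\phi$, the problem becomes coercive on $V$. Using that $\int_{\omega_\epsilon}\partial_x u\cdot\overline u \dd x\dd z$ is purely imaginary (integration by parts in $x$ on $\T_{2\pi}$),
\[
 a_{\theta,\epsilon}(u,u) = \|\partial_z u\|^2 + \|\partial_x u\|^2 + \theta^2\|u\|^2 + 2\theta\operatorname{Im}\int_{\omega_\epsilon}\partial_x u\cdot\overline u \dd x\dd z,
\]
so Young's inequality yields $a_{\theta,\epsilon}(u,u)\geq \tfrac12\|\partial_x u\|^2+\|\partial_z u\|^2-\theta^2\|u\|^2$. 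Since $u$ vanishes on $\gamma$, Lemma~\ref{lem-Poincare} furnishes $\|u\|_{L^2(\omega_\epsilon)}\leq \tfrac34\|\partial_z u\|_{L^2(\omega_\epsilon)}$; the strict inequality $\tfrac34<1$, combined with the a priori bound $|\theta|\leq\tfrac12+\epsilon_0$ and $\epsilon_0$ small, produces $a_{\theta,\epsilon}(u,u)\geq c\|u\|_{H^1(\omega_\epsilon)}^2$ with $c>0$ independent of $(\theta,\epsilon)$. Lax--Milgram then delivers a unique $\Phi$ satisfying $\|\Phi\|_{H^1(\omega_\epsilon)}\leq C\|\phi\|_{H^1(\T_{2\pi})}$ uniformly in $(\theta,\epsilon)$.

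Testing Green's identity against any $H^1$-lift $\Psi$ of $\psi\in H^1(\T_{2\pi})$, and using $n\cdot D_\theta\Phi_{\vert_{z=0}}=\partial_z\Phi_{\vert_{z=0}}=\partial_n\Phi_{\vert_{z=0}}$ on the flat top (where $n_x=0$), produces the weak identity $\langle G_\theta[\epsilon b]\phi,\psi\rangle_{L^2(\T_{2\pi})} = a_{\theta,\epsilon}(\Phi,\Psi)$. Symmetry follows from $a_{\theta,\epsilon}(\Phi,\Psi)=\overline{a_{\theta,\epsilon}(\Psi,\Phi)}$ and positive semi-definiteness from $a_{\theta,\epsilon}(\Phi,\Phi)\geq 0$. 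To upgrade the weak trace $G_\theta[\epsilon b]\phi\in H^{-1/2}(\T_{2\pi})$ provided directly by the form to an element of $L^2(\T_{2\pi})$ with the uniform bound $\|G_\theta[\epsilon b]\phi\|_{L^2}\leq C\|\phi\|_{H^1}$, I would invoke up-to-the-boundary elliptic regularity for the twisted operator $D_\theta^{\ast}D_\theta$ on $\omega_\epsilon$ with $b\in C^2$, adapting the argument of \cite[Proposition~A.14]{Lannes} (whose constants remain uniform for $(\theta,\epsilon)$ in a compact set since both parameters enter smoothly and $\omega_\epsilon$ varies smoothly): this gives $\Phi\in H^2(\omega_\epsilon)$ with $\|\Phi\|_{H^2}\leq C\|\phi\|_{H^1}$, whence $\partial_n\Phi_{\vert_{z=0}}\in H^{1/2}(\T_{2\pi})\subset L^2(\T_{2\pi})$. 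Closedness of $G_\theta[\epsilon b]$ as an unbounded operator on $L^2(\T_{2\pi})$ with domain $H^1(\T_{2\pi})$ then follows by Kato's first representation theorem applied to the closed, densely defined, non-negative sesquilinear form $\phi\mapsto a_{\theta,\epsilon}(\Phi_\phi,\Phi_\phi)$ on $L^2(\T_{2\pi})$ with form domain $H^{1/2}(\T_{2\pi})$.

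The main delicacy is the uniform-in-$\theta$ coercivity of $a_{\theta,\epsilon}$: it is precisely the strict inequality $\tfrac34<1$ in Lemma~\ref{lem-Poincare}, together with the restriction $|\theta|\leq\tfrac12+\epsilon_0$, that allows the indefinite cross term $2\theta\operatorname{Im}\int\partial_x u\cdot\overline u$ to be absorbed by the diagonal terms. This is why the theorem is stated on the slightly enlarged interval $[-\tfrac12-\epsilon_0,\tfrac12+\epsilon_0]$ (convenient later for analyticity) rather than for arbitrary $\theta\in\R$.
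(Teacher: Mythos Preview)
Your approach is essentially the same as the paper's: lift the Dirichlet datum, apply Lax--Milgram on the space of $H^1$ functions vanishing on $\gamma$ using the Poincar\'e inequality for coercivity, invoke elliptic regularity for the $L^2$ normal trace, and read off symmetry and positivity from the form. Your factorization $-\Delta-2i\theta\partial_x+\theta^2=D_\theta^\ast D_\theta$ is a clean way to see that the form is exactly the paper's $a_{\theta,\epsilon}$, and your coercivity computation via Young's inequality is a minor variant of the paper's perfect-square bound $a_{\theta,\epsilon}(\Psi,\Psi)\geq(\|\nabla\Psi\|-|\theta|\|\Psi\|)^2\geq(1-\tfrac{3|\theta|}{4})^2\|\nabla\Psi\|^2$.

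Two small points deserve correction. First, with Dirichlet data $\phi\in H^1(\T_{2\pi})$ you only get $\Phi\in H^{3/2}$ near $\gamma$, not $H^2$; full $H^2$ would require $\phi\in H^{3/2}$. This is harmless, since $H^{3/2}$ already gives $\partial_z\Phi_{\vert z=0}\in L^2$, and it is exactly what the paper uses. Second, your closedness argument via Kato's first representation theorem is incomplete: Kato produces a self-adjoint $A$ with $H^1(\T_{2\pi})\subset D(A)$, but to conclude that $G_\theta[\epsilon b]$ with domain $H^1$ is closed you must verify $D(A)=H^1$, i.e.\ that $\partial_n\Phi_{\vert z=0}\in L^2$ forces $\phi\in H^1$. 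The paper handles this by a direct sequence argument: given $(\phi_n,G_\theta\phi_n)\to(\phi,g)$ in $L^2\times L^2$, the identity $(G_\theta\phi_n,\phi_n)\geq C_1\|\Phi_n\|_{H^1}^2-C_2\|\phi_n\|_{L^2}^2$ bounds $\Phi_n$ in $H^1$, and then elliptic regularity from $\partial_n\Phi_{\vert z=0}=g\in L^2$ yields $\Phi\in H^{3/2}$, hence $\phi\in H^1$. You need this reverse regularity step either way.
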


\begin{proof}
It is proved in \cite[Proposition~2.2]{CGLS18} that $G_{\theta}[\epsilon b]$ is well-defined. We briefly recall the argument. For $\phi \in H^1(\T_{2\pi})$ given, we lift the boundary condition on $z=0$ by\footnote{$F$ is linear and continuous from $H^1(\T_{2\pi})$ to $H^{3/2}(\T_{2\pi}\times (-2,0))$ and a possible construction is $F(\phi)(x,z)=\sum_{k\in \Z} \Big(\sum_{j=0}^1 z^j \widehat{\phi^{(j)}}_k h(z\sqrt{1+k^2}) \Big) e^{ikx} $, where $h\in C^\infty_c([0,1),[0,1])$, $h_{\vert_{[0,1/2]}}=1$, see \cite[Theorem~3.1]{LP20} and \cite[Section 2.5]{N12}.} $F(\phi)\in H^{3/2}(\T_{2\pi}\times(-2,0))$ where $F(\phi)_{\vert_{z=0}}=\phi$, $F(\phi)_{\vert_{z=-2}}=0$ and introduce $\Phi=\tilde\Phi+F(\phi)$, where $\tilde\Phi$ is the unique solution in $H^1_{\surf,0}(\omega_\epsilon)$ of the variational formulation
\[
a_{\theta,\epsilon}(\tilde\Phi, \varphi )= - a_{\theta,\epsilon}(F(\phi), \varphi )\quad \forall \varphi\in H^1_{\surf,0}(\omega_\epsilon)
\]
where
\[
a_{\theta,\epsilon}(\Psi, \varphi ):= \int_{\omega_\epsilon}\Big[ \nabla \Psi \cdot \nabla \overline{\varphi} + i\theta( \Psi \partial_{x} \overline{\varphi}-\overline{\varphi} \partial_{x} \Psi) +\theta^2 \Psi \overline{\varphi}\Big] 
\]
and $H^1_{\surf,0}(\omega_\epsilon)$ is the set of functions belonging in $H^1(\omega_\epsilon)$ vanishing at the surface, i.e. on $\{z=0\}$. The existence and uniqueness of $\tilde\Phi$ is a consequence of the Lax-Milgram theorem and the Poincaré inequality (Lemma~\ref{lem-Poincare}), since the coercivity property
\begin{align*}
 a_{\theta,\epsilon}(\Psi, \Psi ) &= \int_{\omega_{\varepsilon}} |\nabla \Psi|^2 -2 \theta \mathrm{Im}(\Psi \partial_{x} \overline{\Psi}) +\theta^2 |\Psi |^2\geq ( \|\nabla \Psi \|_{L^2(\omega_{\varepsilon})} -|\theta| \|\Psi \|_{L^2(\omega_{\varepsilon})})^2\\ 
 &\geq \Big(1 - \frac{3|\theta|}{4}\Big)^2 \|\nabla \Psi \|_{L^2(\omega_{\varepsilon})} ^2 \geq C \|\nabla \Psi \|_{L^2(\omega_{\varepsilon})} ^2 \geq \tilde C \| \Psi \|_{H^1(\omega_{\varepsilon})} ^2
\end{align*}
holds with $C$ and $\tilde C$ independent of $\theta\in [-\frac{1}{2}-\varepsilon_{0},\frac{1}{2}+\varepsilon_{0}]$ and $\varepsilon\in [\epsilon_0,\varepsilon_{0}]$, where $\varepsilon_{0}$ is chosen smaller than $1/2$.

As $\Delta \tilde\Phi \in H^{-1/2}(\T_{2\pi}\times (-1/2,0))$, by elliptic regularity, we have $\tilde\Phi, \Phi \in H^{3/2}(\T_{2\pi}\times (-1/4,0))$, thus the normal trace $\partial_z \Phi_{\vert_{z=0}}$ belongs to $L^2(\mathbb{T}_{2\pi})$. We then have obtained that $G_\theta[\epsilon b]$ is a continuous operator from $H^1(\T_{2\pi})$ to $L^2(\T_{2\pi})$ uniformly with respect to $\theta\in [-\frac{1}{2}-\varepsilon_{0},\frac{1}{2}+\varepsilon_{0}]$ and $\varepsilon\in [\epsilon_0,\varepsilon_{0}]$.

Moreover, for any $\phi,\psi \in H^1(\T_{2\pi})$, let $\Phi,\Psi$ be the solutions associated to \eqref{D2N-theta} respectively. Then, 
\begin{align*}
(G_{\theta}[\varepsilon b]\phi,\psi ) &= \int_{0}^{2\pi} \Big(G_{\theta}[\varepsilon b]\phi\Big) \overline{\psi} 
=\int_{\omega_\epsilon}\Big[ \nabla \Phi \cdot \nabla \overline{\Psi} + i\theta( \Phi \partial_{x} \overline{\Psi}-\overline{\Psi} \partial_{x} \Phi) +\theta^2 \Phi \overline{\Psi}\Big] \\
&=(\phi ,G_{\theta}[\varepsilon b]\psi ),
\end{align*}
which implies that $G_{\theta}[\varepsilon b]$ is a positive semi-definite operator, symmetric for the $L^2$-scalar product. The positivity follows from the coercivity:
\[
(G_{\theta}[\varepsilon b]\phi,\phi )\geq ( \|\nabla \Phi \|_{L^2(\omega_{\varepsilon})} -\theta \|\Phi \|_{L^2(\omega_{\varepsilon})})^2.
\]
$G_\theta$ is also closed: let $(\phi_n, G_\theta[\epsilon b]\phi_n)_n$ be a sequence of the graph of $G_\theta[\epsilon b]$ converging in $L^2(0,2\pi)\times L^2(0,2\pi)$ to $(\phi,g)$, for any test function $\Psi \in C^\infty(\omega_\epsilon)$, we have, for all $n$,
\[
(G_{\theta}[\varepsilon b]\phi_n,\Psi(\cdot,0) )=\int_{\omega_\epsilon}\Big[ \nabla \Phi_{n} \cdot \nabla \overline{\Psi} + i\theta( \Phi_{n} \partial_{x} \overline{\Psi}-\overline{\Psi} \partial_{x} \Phi_{n}) +\theta^2 \Phi_{n} \overline{\Psi}\Big].
\]
From the Poincar\'e inequality, we deduce that $(\Phi_{n})_{n}$ is a bounded sequence in $H^1(\omega_{\varepsilon})$:
\begin{align*}
 \| G_{\theta}[\varepsilon b]\phi_n\|_{L^2}^{1/2} \|\phi_{n}\|_{L^2}^{1/2}&\geq (G_{\theta}[\varepsilon b]\phi_n,\phi_{n} )^{1/2} \geq \|\nabla \Phi_{n} \|_{L^2(\omega_{\varepsilon})} -\theta \|\Phi_{n} \|_{L^2(\omega_{\varepsilon})}\\
 &\geq C_{1} \| \Phi_{n} \|_{H^1(\omega_{\varepsilon})} -C_{2} \| \phi_{n} \|_{L^2(\T_{2\pi})} 
\end{align*}
with constants $C_{1}$ and $C_{2}$ independent of $\theta\in [-\frac12-\varepsilon_{0},\frac12+\varepsilon_{0}]$ and $\varepsilon\in [-\epsilon_0,\varepsilon_{0}]$, where $\varepsilon_{0}$ is chosen smaller than $1/2$. Passing to the limit in the previous equality, $(\Phi_n)_n$ tends in the sense of distributions to $\Phi$, solution of \eqref{D2N-theta}, with $\Phi_{\vert_{z=0}} = \phi$ and $\partial_n\Phi_{\vert_{z=0}} = g$. From $\partial_n \Phi_{\vert_{z=0}} = g\in L^2$, elliptic regularity implies that $\Phi\in H^{3/2}(\omega_\epsilon)$, hence that $\phi \in H^1(\T_{2\pi})=D(G_{\theta}[\varepsilon b])$, which concludes the closure of $G_{\theta}[\varepsilon b]$.
\end{proof}

An important tool for the study of spectral properties is the resolvent operator $(1+G_\theta[\epsilon b])^{-1}$. The next proposition relates the resolvent operator to the trace of the unique solution in $H^1(\omega_\epsilon)$ of an auxiliary elliptic system. The variational formulation of this system was introduced in \cite[Section 4.a]{CNR13}. 

\begin{proposition}\label{proposition_variation_formulation_resolvent}
Let $b\in C^2(\T_{2\pi})$. There is $\epsilon_0>0$ such that,
for all $\theta\in [-\frac{1}{2} -\epsilon_0,\frac12+\epsilon_0]$, $\epsilon\in [-\epsilon_0,\epsilon_0]$ and $\xi \in L^2(\T_{2\pi})$, the system:
\begin{equation}\label{equation_resolvent_omega_epsilon}
 \begin{cases}
 (-\Delta -2i\theta \partial_x +\theta^2 ) \Phi =0 \quad \text{in }\omega_\epsilon , \\
 (\partial_n+1)\Phi_{\vert_{z=0}} = \xi, \quad (\partial_n+i\theta n_x )\Phi_{\vert_{z=-1+\epsilon b}} = 0,
\end{cases}
\end{equation}
has a unique variational solution $\Phi \in H^1(\omega_\epsilon)$ and 
\begin{equation*}
(1+G_\theta[\epsilon b])^{-1}\xi=\Phi_{\vert_{z=0}}.
\end{equation*}
Moreover, $(1+G_\theta[\epsilon b])^{-1}$ is bounded from $L^2(\T_{2\pi})$ to $H^1(\T_{2\pi})$ independently of $\theta\in [-\frac{1}{2} -\epsilon_0,\frac12+\epsilon_0]$ and $\epsilon\in [-\epsilon_0,\epsilon_0]$.
\end{proposition}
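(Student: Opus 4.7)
The plan is to cast \eqref{equation_resolvent_omega_epsilon} as a variational problem on the full space $H^1(\omega_\epsilon)$ (with no Dirichlet constraint on $\gamma$) and solve it via Lax--Milgram, modelled on the bilinear form $a_{\theta,\epsilon}$ already used in the proof of Theorem~\ref{theo-def-Gtheta}. Multiplying the interior equation in \eqref{equation_resolvent_omega_epsilon} by $\overline\varphi$ for an arbitrary $\varphi\in H^1(\omega_\epsilon)$, integrating by parts (periodicity eliminates the lateral contributions), and injecting the bottom condition $(\partial_n+i\theta n_x)\Phi=0$, one is left with a boundary term on $\gamma$ of the form $-\int_\gamma(\partial_n\Phi)\overline\varphi$. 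The Robin condition $\partial_n\Phi+\Phi=\xi$ on $\gamma$ then turns the problem into: find $\Phi\in H^1(\omega_\epsilon)$ such that
\[
\tilde a_{\theta,\epsilon}(\Phi,\varphi):=a_{\theta,\epsilon}(\Phi,\varphi)+\int_\gamma \Phi\,\overline\varphi\;=\;\int_\gamma \xi\,\overline\varphi,\qquad \forall \varphi\in H^1(\omega_\epsilon).
\]

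The genuinely delicate step is uniform coercivity of $\tilde a_{\theta,\epsilon}$ on $H^1(\omega_\epsilon)$ for $(\theta,\epsilon)\in[-\tfrac12-\epsilon_0,\tfrac12+\epsilon_0]\times[-\epsilon_0,\epsilon_0]$. Writing
\[
\tilde a_{\theta,\epsilon}(\Phi,\Phi)=\|\nabla\Phi\|_{L^2(\omega_\epsilon)}^2-2\theta\,\mathrm{Im}(\Phi\,\partial_x\overline\Phi)_{L^2(\omega_\epsilon)}+\theta^2\|\Phi\|_{L^2(\omega_\epsilon)}^2+\|\Phi(\cdot,0)\|_{L^2(\gamma)}^2,
\]
the cross term is absorbed by Cauchy--Schwarz, giving
\[
\tilde a_{\theta,\epsilon}(\Phi,\Phi)\;\ge\;\bigl(\|\nabla\Phi\|_{L^2(\omega_\epsilon)}-|\theta|\|\Phi\|_{L^2(\omega_\epsilon)}\bigr)^2+\|\Phi(\cdot,0)\|_{L^2(\gamma)}^2.
\]
I would now feed Lemma~\ref{lem-Poincare} into the $|\theta|\|\Phi\|_{L^2}$ factor; because its coefficient $3/4$ is strictly less than $1$ and $|\theta|\le\tfrac12+\epsilon_0$, one checks (splitting into the cases where $\|\nabla\Phi\|_{L^2}$ dominates $\|\Phi(\cdot,0)\|_{L^2}$ or the reverse) that for $\epsilon_0$ small enough
\[
\tilde a_{\theta,\epsilon}(\Phi,\Phi)\;\ge\;c\,\bigl(\|\nabla\Phi\|_{L^2(\omega_\epsilon)}^2+\|\Phi(\cdot,0)\|_{L^2(\gamma)}^2\bigr),
\]
with $c>0$ uniform in $(\theta,\epsilon)$. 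One final use of Lemma~\ref{lem-Poincare} upgrades this to full $H^1(\omega_\epsilon)$-coercivity. Continuity of $\tilde a_{\theta,\epsilon}$ on $H^1(\omega_\epsilon)\times H^1(\omega_\epsilon)$ and of $\varphi\mapsto\int_\gamma\xi\overline\varphi$ is immediate from Cauchy--Schwarz and the trace $H^1(\omega_\epsilon)\to L^2(\gamma)$, so Lax--Milgram delivers a unique $\Phi\in H^1(\omega_\epsilon)$ with $\|\Phi\|_{H^1(\omega_\epsilon)}\le C\|\xi\|_{L^2(\T_{2\pi})}$, constants uniform in $(\theta,\epsilon)$. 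This is where the strengthened Poincar\'e inequality is essential: as $|\theta|\to\tfrac12$, $a_{\theta,\epsilon}$ alone no longer controls $\|\Phi\|_{L^2(\omega_\epsilon)}^2$, and the surface term from the Robin condition is precisely what restores it.

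To identify $\phi:=\Phi_{|z=0}$ with $(1+G_\theta[\epsilon b])^{-1}\xi$, I read the Robin condition as $\partial_n\Phi_{|z=0}=\xi-\phi\in L^2(\T_{2\pi})$ (using $\phi\in H^{1/2}(\T_{2\pi})\subset L^2(\T_{2\pi})$ by trace). The elliptic-regularity bootstrap carried out at the end of the proof of Theorem~\ref{theo-def-Gtheta} then yields $\Phi\in H^{3/2}(\T_{2\pi}\times(-\tfrac14,0))$ and hence $\phi\in H^1(\T_{2\pi})=D(G_\theta[\epsilon b])$. Since $\Phi$ now solves \eqref{D2N-theta} with Dirichlet datum $\phi$, uniqueness in Theorem~\ref{theo-def-Gtheta} forces $G_\theta[\epsilon b]\phi=\partial_n\Phi_{|z=0}=\xi-\phi$, i.e.\ $(1+G_\theta[\epsilon b])\phi=\xi$. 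Positivity of $G_\theta[\epsilon b]$ makes $1+G_\theta[\epsilon b]$ a bijection from $H^1(\T_{2\pi})$ onto $L^2(\T_{2\pi})$, giving $(1+G_\theta[\epsilon b])^{-1}\xi=\phi=\Phi_{|z=0}$. The uniform $L^2\to H^1$ bound on the resolvent is then inherited from the uniform $H^1(\omega_\epsilon)$-estimate on $\Phi$, combined with the trace and elliptic-regularity constants, all independent of $(\theta,\epsilon)$ in the prescribed range.
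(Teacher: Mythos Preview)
Your proof is correct and follows essentially the same route as the paper: the same variational form $\tilde a_{\theta,\epsilon}=a_{\theta,\epsilon}+\int_\gamma\Phi\overline\varphi$, the same lower bound $(\|\nabla\Phi\|-|\theta|\|\Phi\|)^2+\|\Phi(\cdot,0)\|^2$, Lax--Milgram, elliptic regularity, and the identification $(1+G_\theta)\phi=\xi$. Two minor differences are worth noting. For coercivity, the paper avoids your case-split by introducing a parameter $\delta\in(0,1]$, writing $\tilde a\ge\delta(\|\nabla\Phi\|-|\theta|\|\Phi\|)^2+\|\Phi(\cdot,0)\|^2\ge\tfrac12(\sqrt\delta\,|\cdots|+\|\Phi(\cdot,0)\|)^2$, applying Poincar\'e inside the absolute value, and then choosing $\delta$ small so the trace coefficient stays positive; this is a bit cleaner than your dichotomy, though yours works too. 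For injectivity of $1+G_\theta[\epsilon b]$, you invoke positivity (which is fine: $((1+G_\theta)\phi,\phi)\ge\|\phi\|^2$), whereas the paper argues directly that a kernel element yields a solution of the variational problem with $\xi=0$, hence vanishes by uniqueness.
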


\begin{proof}
The function $\Phi$ is a variational solution of \eqref{equation_resolvent_omega_epsilon} if and only if, for any $\Psi \in H^1(\omega_\epsilon)$, 
\begin{equation*}
a^R_{\theta,\epsilon}(\Phi,\Psi) = L(\Psi) := \int_{\T_{2\pi}} \xi \overline{\Psi(\cdot,0)} 
\end{equation*}
where
\begin{equation}\label{definition_a}
a^R_{\theta,\epsilon}(\Phi,\Psi):= \int_{\omega_\epsilon}\Big[ \nabla \Phi \cdot \nabla \overline{\Psi} + i\theta( \Phi \partial_{x} \overline{\Psi}-\overline{\Psi} \partial_{x} \Phi) +\theta^2 \Phi \overline{\Psi}\Big] + \int_\gamma \Phi \overline{\Psi} .
\end{equation}
The operator $L$ is continuous since
\begin{equation*}
|L(\Psi)| \leq \norm{\xi}_{L^2(\T_{2\pi})}\norm{\Psi}_{L^2(\gamma)} \leq C\norm{\xi}_{L^2(\T_{2\pi})}\norm{\Psi}_{H^1(\omega_\epsilon)}
\end{equation*}
and $a^R_{\theta,\epsilon}$ is a continuous sesquilinear form. In addition, it is coercive:
\begin{align*}
a^R_{\theta,\epsilon}(\Phi,\Phi) &\geq (\|\nabla \Phi \|_{L^2(\omega_{\varepsilon})} -\theta \|\Phi \|_{L^2(\omega_{\varepsilon})} )^2+ \|\Phi\|_{L^2(\gamma)}^2\\
&\geq \delta (\|\nabla \Phi \|_{L^2(\omega_{\varepsilon})} -\theta \|\Phi \|_{L^2(\omega_{\varepsilon})} )^2+ \|\Phi\|_{L^2(\gamma)}^2\\
&\geq \frac12 \Big( \sqrt{\delta} \Big| \|\nabla \Phi \|_{L^2(\omega_{\varepsilon})} -\theta \|\Phi \|_{L^2(\omega_{\varepsilon})} \Big|+ \|\Phi\|_{L^2(\gamma)} \Big)^2
\end{align*}
for all $\delta\in (0,1]$, hence by Poincaré inequality (Lemma~\ref{lem-Poincare})
\begin{equation*}
(a^R_{\theta,\epsilon}(\Phi,\Phi))^{1/2} \geq \sqrt{\delta} C_1 \|\nabla \Phi \|_{L^2(\omega_{\varepsilon})} + \Big(\frac1{\sqrt2}-\sqrt\delta C_2 \Big)\|\Phi\|_{L^2(\gamma)}\geq C_3\| \Phi \|_{H^1(\omega_{\varepsilon})}
\end{equation*}
with $C_{1}$, $C_{2}$ and $C_3$ independent of $\theta\in [-\frac12-\varepsilon_{0},\frac12+\varepsilon_{0}]$ and $\varepsilon\in [-\epsilon_0,\varepsilon_{0}]$, where $\delta$ is chosen small enough. 
By Lax-Milgram theorem, there is a unique solution $\Phi \in H^1(\omega_\epsilon)$ of $a^R_{\theta,\epsilon}(\Phi,\cdot) = L$. By elliptic regularity\footnote{When $\Delta \Phi=f \in L^2(\T_{2\pi}\times (-\frac34,0))$ and $\partial_n \Phi = g \in L^2(\T_{2\pi})$, we have $\Phi \in H^{3/2}(\T_{2\pi}\times (-\frac12,0))$: to prove this, we lift the boundary condition by $\tilde F(g)(x,z)=\sum_{k\in \Z} z \widehat{g}_k h(z\sqrt{1+k^2}) e^{ikx}$ so that $\tilde F$ is linear and continuous from $L^2(\T_{2\pi})$ to $H^{3/2}(\T_{2\pi}\times (-\frac34,0))$ and conclude that $\Phi -\tilde F(g)\in H^{3/2}(\T_{2\pi}\times (-\frac12,0))$.}, $\Phi \in H^{3/2}(\T_{2\pi}\times (-\frac12,0))$ and $\phi:=\Phi_{\vert_{z=0}} \in H^1(\T_{2\pi})$. $\Phi$ is also the unique solution of \eqref{D2N-theta}, and we have
\[
G_{\theta}[\epsilon b]\phi = \partial_n \Phi_{\vert_{z=0}} = \xi-\phi,
\]
that is, we have found $\phi\in H^1(\T_{2\pi})$ such that $(1+G_{\theta}[\epsilon b])\phi = \xi$. We have thus proved the surjectivity of $(1+G_{\theta}[\epsilon b])$ from $H^1(\T_{2\pi})$ in $L^2(\T_{2\pi})$.
The injectivity is obvious: for $\phi$ in the kernel of $(1+G_{\theta}[\epsilon b])$, the solution $\Phi$ to \eqref{D2N-theta} is solution of \eqref{equation_resolvent_omega_epsilon} with $\xi\equiv 0$, hence $\Phi\equiv 0$ and $\phi\equiv 0$ by uniqueness in \eqref{equation_resolvent_omega_epsilon}.
This ends the proof of the bijectivity of $(1+G_{\theta}[\epsilon b])$ and 
\[(1+G_\theta[\epsilon b])^{-1}\xi=\phi=\Phi_{\vert_{z=0}}.\]
\end{proof}

\begin{remark}\label{rem-resolvente}
In the previous proof, the presence of $\|\Phi\|_{L^2(\gamma)}$ is important to obtain the coercivity, but choosing $\delta$ possibly smaller, we can easily prove that $\lambda\notin \sigma(G_{\theta}[\epsilon b])$ for any $\lambda\in ]-\infty,0)$, only adding $-\lambda$ in front of $\int_\gamma \Phi \overline{\Psi}$ in the definition of $a^R_{\theta,\epsilon}$.
\end{remark}

An important consequence of Proposition~\ref{proposition_variation_formulation_resolvent} is the self-adjointness of $G_{\theta}[\epsilon b]$ and properties of its spectrum.

\begin{corollary}
Let $b\in C^2(\T_{2\pi})$. There is $\epsilon_0>0$ such that, for all $\theta\in [-\frac{1}{2}- \epsilon_0,\frac{1}{2} +\epsilon_0]$ and $\epsilon\in [-\epsilon_0,\epsilon_0]$, the operator $G_\theta[\epsilon b]$ is self-adjoint with domain $H^1(\T_{2\pi})$ and its spectrum $\sigma(G_{\theta}[\epsilon b]) \subset[0,+\infty[$.
\end{corollary}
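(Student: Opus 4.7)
The plan is to combine the results already established in Theorem~\ref{theo-def-Gtheta} and Proposition~\ref{proposition_variation_formulation_resolvent} with a standard self-adjointness criterion for closed symmetric operators. From Theorem~\ref{theo-def-Gtheta}, we know that $G_\theta[\epsilon b]$ is densely defined (since $H^1(\T_{2\pi})$ is dense in $L^2(\T_{2\pi})$), closed, symmetric, and positive semi-definite. From Proposition~\ref{proposition_variation_formulation_resolvent}, the operator $(I + G_\theta[\epsilon b])$ is a bijection from $H^1(\T_{2\pi})$ onto $L^2(\T_{2\pi})$, with bounded inverse.

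The key step is to invoke the following classical criterion (see e.g. \cite[Theorem~VIII.3]{RS78} or any standard reference): a densely defined closed symmetric operator $T$ on a Hilbert space is self-adjoint if and only if $\mathrm{Ran}(T - zI) = \mathcal{H}$ for some $z \in \C \setminus \R$; moreover, for a positive symmetric $T$, it suffices that $T+c$ be surjective for some real $c>0$. Applying this to $T = G_\theta[\epsilon b]$ and $c = 1$, Proposition~\ref{proposition_variation_formulation_resolvent} gives precisely surjectivity of $I + G_\theta[\epsilon b]$, so $G_\theta[\epsilon b]$ is self-adjoint with domain $H^1(\T_{2\pi})$.

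For the location of the spectrum, self-adjointness already implies $\sigma(G_\theta[\epsilon b]) \subset \R$. To exclude negative values, I would invoke Remark~\ref{rem-resolvente}, which shows that the coercivity argument of Proposition~\ref{proposition_variation_formulation_resolvent} extends to $(G_\theta[\epsilon b] - \lambda I)$ for every $\lambda < 0$: the modified sesquilinear form $a^R_{\theta,\epsilon}(\Phi,\Psi) - \lambda \int_\gamma \Phi\overline{\Psi}$ remains coercive on $H^1(\omega_\epsilon)$, so $(G_\theta[\epsilon b] - \lambda I)$ is bijective by Lax--Milgram, whence $\lambda \in \rho(G_\theta[\epsilon b])$. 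Therefore $\sigma(G_\theta[\epsilon b]) \subset [0,+\infty)$.

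There is no real obstacle here: the work has been done in the preceding theorem and proposition. The only care needed is the standard observation that bijectivity of $(I + G_\theta[\epsilon b]) \colon H^1 \to L^2$, together with symmetry and closedness, is exactly the hypothesis of the self-adjointness criterion, and that the Lax--Milgram argument used for the resolvent at $-1$ works verbatim for any negative spectral parameter, as already noted in Remark~\ref{rem-resolvente}.
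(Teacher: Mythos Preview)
Your proof is correct and follows essentially the same route as the paper. Both arguments combine the closedness and symmetry from Theorem~\ref{theo-def-Gtheta} with the surjectivity of $I+G_\theta[\epsilon b]$ from Proposition~\ref{proposition_variation_formulation_resolvent}, invoke a standard self-adjointness criterion (the paper cites the trichotomy in \cite[Theorem~X.1]{RS75}, you cite the equivalent range condition), and then use Remark~\ref{rem-resolvente} for $\sigma\subset[0,\infty)$. The only cosmetic issue is the reference: Theorem~VIII.3 lives in \cite{RS80} (Volume~I), not \cite{RS78} (Volume~IV).
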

\begin{proof}
This result comes directly from the classical theorem for closed symmetric operators on Hilbert spaces. Indeed \cite[Theorem~X.1]{RS75} states that the spectrum of $G_{\theta}[\epsilon b]$ is either the closed upper half-plane, the closed lower half-plane, the entire plane or a subset of the real axis.
In the proof of Proposition~\ref{proposition_variation_formulation_resolvent}, we obtained that $(1+G_\theta[\epsilon b])^{-1}$ is bounded from $L^2(\T_{2\pi})$ to $H^1(\T_{2\pi})$, which implies that $-1\notin \sigma(G_{\theta}[\epsilon b])$. Thus the spectrum of $G_{\theta}[\epsilon b]$ is a subset of the real axis.
The third statement in \cite[Theorem~X.1]{RS75} claims, that in this case, the operator is also self-adjoint. Remark~\ref{rem-resolvente} implies that $\sigma(G_{\theta}[\epsilon b]) \subset[0,+\infty[$.
\end{proof}

\begin{remark}\label{rem-eigen-resolvent}
Since its resolvent is compact, the self-adjointness of $G_\theta[\epsilon b]$ implies that it has purely discrete spectrum. There exists an orthonormal basis $(\psi_n(\theta,\epsilon,\cdot))_{n\geq 0}$ of $L^2(\T_{2\pi})$ composed of eigenvectors of $G_{\theta}[\epsilon b]$, where the eigenvalues $(\lambda_n(\theta,\epsilon))_{n\geq 0}$ of $G_{\theta}[\epsilon b]$ are real numbers that we can order such that $(\lambda_n)_n$ is increasing and tends to $+\infty$ as $n$ tends to infinity. 
Their multiplicity is finite and $\sigma (G_{\theta}[\epsilon b])=\{\lambda_n(\theta,\epsilon),n\in \N\}$. Note that $G_{\theta}[\epsilon b]\psi_n(\theta,\epsilon,\cdot) = \lambda_n(\theta,\epsilon)\psi_n(\theta,\epsilon,\cdot)$ implies that $\Phi_n$, solution of \eqref{D2N-theta} with $\phi=\psi_n$, is also solution of \eqref{equation_resolvent_omega_epsilon} with $\xi=(1+\lambda_n)\psi_n$ and that $(1+G_\theta[\epsilon b])^{-1}) (1+\lambda_n)\psi_n = \psi_n$, which means that $\psi_n$ is an eigenfunction of the resolvent with eigenvalue $\tau_n(\theta,\epsilon)=1/(1+\lambda_n(\theta,\epsilon))$.
\end{remark}

A second consequence of the self-adjointness is that the definition of $G_\theta[\epsilon b]$ given in Theorem~\ref{theo-def-Gtheta} provides the appropriate integral decomposition of $G[\epsilon b]$ as expressed in the next theorem.

\begin{theorem}\label{theo:Floquet-decompop}
Under the decomposition \eqref{Floquet-decompspace}, we have
\begin{equation}\label{Floquet-decompop}
 \cal U G[\epsilon b] \cal U^\ast = \int_{(-\frac{1}{2},\frac{1}{2}]}^\bigoplus e^{i\theta x} G_\theta[\epsilon b] e^{-i\theta x} \dd \theta .
\end{equation}
\end{theorem}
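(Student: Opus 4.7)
The plan is to verify the intertwining relation on a core, namely the Schwartz class $\cal S(\R)$, by Bloch-Floquet--decomposing the harmonic lift $\Phi$ of the Dirichlet datum, and then to promote the identity to an equality of self-adjoint operators via standard direct-integral theory. Concretely, given $f\in\cal S(\R)$, let $\Phi\in H^1(\Omega_\epsilon)$ denote the unique variational solution of \eqref{def-G} with $\Phi|_{z=0}=f$, so that $G[\epsilon b]f=\partial_z\Phi|_{z=0}$ (the top being flat). Because $b$ is periodic and $f$ is Schwartz, classical elliptic estimates in a strip give enough decay of $\Phi$ in $x$ to define
\[
\Phi_\theta(x,\theta,z):=\sum_{n\in\Z}\Phi(x+2\pi n,z)\,e^{-2\pi i n\theta},
\]
which is $\theta$-quasi-periodic in $x$ and agrees with $\cal U f(\cdot,\theta)$ at $z=0$.

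The key step is to identify $\Phi_\theta$ with the fiber solution. Setting $\tilde\Phi_\theta(x,\theta,z):=e^{-i\theta x}\Phi_\theta(x,\theta,z)$, which is now genuinely $2\pi$-periodic in $x$, one computes
\[
\partial_x(e^{i\theta x}\tilde\Phi_\theta)=e^{i\theta x}(\partial_x+i\theta)\tilde\Phi_\theta,\qquad \partial_x^2(e^{i\theta x}\tilde\Phi_\theta)=e^{i\theta x}(\partial_x+i\theta)^2\tilde\Phi_\theta,
\]
so that $\Delta\Phi_\theta=0$ on $\omega_\epsilon$ translates into $(-\Delta-2i\theta\partial_x+\theta^2)\tilde\Phi_\theta=0$, while the parameterization of the bottom and the formula $\partial_n=n_x\partial_x+n_z\partial_z$ convert $\partial_n\Phi_\theta|_{z=-1+\epsilon b}=0$ into $(\partial_n+i\theta n_x)\tilde\Phi_\theta|_{z=-1+\epsilon b}=0$. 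Since $\tilde\Phi_\theta|_{z=0}=e^{-i\theta x}\cal U f(x,\theta)\in H^1(\T_{2\pi})$, the uniqueness statement of Theorem~\ref{theo-def-Gtheta} identifies $\tilde\Phi_\theta$ as the variational lift associated with $G_\theta[\epsilon b]$ applied to $e^{-i\theta x}\cal U f(\cdot,\theta)$. Taking $\partial_z$ at $z=0$ and using that $\cal U$ commutes with $\partial_z$ on $\cal S(\R)$-regular data,
\[
\cal U(G[\epsilon b]f)(x,\theta)=\partial_z\Phi_\theta|_{z=0}=e^{i\theta x}\,\partial_z\tilde\Phi_\theta|_{z=0}=e^{i\theta x}G_\theta[\epsilon b]\bigl(e^{-i\theta x}\cal U f(x,\theta)\bigr),
\]
which is exactly the fiber-wise action prescribed by the right-hand side of \eqref{Floquet-decompop} applied to $\cal U f$.

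Finally, the right-hand side of \eqref{Floquet-decompop} defines a self-adjoint operator on $L^2((-\tfrac12,\tfrac12];L^2(0,2\pi))$ because each fiber $G_\theta[\epsilon b]$ is self-adjoint (with the uniform resolvent bound from Proposition~\ref{proposition_variation_formulation_resolvent} guaranteeing measurability of the fiber decomposition in the sense of \cite[Theorem~XIII.85]{RS78}). Conjugation by the unitary $\cal U$ yields a self-adjoint operator on $L^2(\R)$ that coincides with $G[\epsilon b]$ on the $G[\epsilon b]$-core $\cal S(\R)$, and two self-adjoint operators that agree on a core must coincide. I expect the main technical obstacle to be the first step, namely justifying that the series defining $\Phi_\theta$ converges in a sense strong enough to commute with $\Delta$ and the trace on the bottom: this requires quantitative Schauder/energy estimates controlling $\Phi(\cdot+2\pi n,\cdot)$ uniformly in $n$ by the Schwartz norms of $f$. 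Once this harmonic extension version of Poisson summation is secured, the algebraic manipulation above and the standard measurable selection of eigenprojectors (Remark~\ref{rem-eigen-resolvent}) give the integral decomposition without further analytic difficulty.
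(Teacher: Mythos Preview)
Your proposal is correct and follows essentially the same route as the paper: both arguments verify the intertwining on the core $\cal S(\R)$ by applying the Bloch--Floquet sum to the harmonic lift $\Phi$, identify the resulting $e^{-i\theta x}\sum_n\Phi(x+2\pi n,z)e^{-2\pi i n\theta}$ with the unique variational solution of \eqref{D2N-theta} via Theorem~\ref{theo-def-Gtheta}, and conclude by the self-adjointness of both sides (invoking \cite[Theorem~XIII.85]{RS78} for the direct integral). The convergence issue you flag for the series defining $\Phi_\theta$ is indeed the only delicate point, and the paper treats it no more carefully than you do.
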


\begin{proof}
We follow the proof of \cite[Equation (148) page 289]{RS78}.
Denote $A$ the operator in the right hand side of \eqref{Floquet-decompop}. Since $G_\theta[\epsilon b]$ is self-adjoint for all $\theta \in (-\frac12,\frac12]$, it follows from \cite[Theorem XIII.85 (a)]{RS78} that $A$ is self-adjoint. Since $G[\epsilon b]$ is also self-adjoint (see Subsection~\ref{subsec:21}) and since a symmetric operator can at most have one self-adjoint extension it is sufficient to show that if $\xi\in \cal S(\R)$, then $\cal U \xi \in D(A)$ and $\cal U G[\epsilon b]\xi = A\cal U\xi $. 

For $\xi\in \cal S(\R)$, from the definition of $\cal U$ as a convergent sum, $\cal U\xi\in C^\infty$. For every fixed $\theta\in (-\frac12,\frac12]$, we have $e^{-i\theta (x+2\pi)}\cal U\xi(x+2\pi,\theta) = e^{-i\theta x} \cal U \xi(x,\theta)$, hence $x\mapsto e^{-i\theta x} \cal U \xi(x,\theta)$ belongs to $H^1(\T_{2\pi})=D(G_\theta[\epsilon b])$, which implies that $\cal U \xi \in D(A)$.

Next, we use the definition of $G_\theta[\epsilon b]$ to consider $\Phi_\theta$ solution of \eqref{D2N-theta} for $\phi(x) = e^{-i\theta x} \cal U \xi(x,\theta)$, then $A\cal U\xi(\theta,x)=e^{i\theta x} \partial_n \Phi_\theta {\vert_{z=0}}(x)$.
On the other hand, let $\Phi$ be the solution of \eqref{def-G}, $G[\epsilon b]\xi =\partial_n \Phi_{\vert_{z=0}}$, and 
\[
\cal U G[\epsilon b]\xi(x,\theta)=\sum_{n=-\infty}^{\infty} (G[\epsilon b]\xi)(x+2\pi n) e^{-2\pi i\theta n} = e^{i\theta x} \partial_n \tilde\Phi_\theta{\vert_{z=0}}(x)
\]
where
\[
\tilde\Phi_\theta(x,z) =e^{-i\theta x} \sum_{n=-\infty}^{\infty} \Phi(x+2\pi n, z) e^{-2\pi i\theta n}.
\]
Noticing that $\tilde\Phi_\theta$ is solution of \eqref{D2N-theta} for $\phi(x) = e^{-i\theta x} \cal U \xi(x,\theta)$ allows to conclude that $\tilde\Phi_\theta=\Phi_\theta$, which ends the proof.
\end{proof}

We conclude this section by noticing that the spectrum of $G_\theta[\epsilon b]$ is even with respect to $\theta$. Indeed, for any $\phi \in H^1(\T_{2\pi})$, taking the conjugate of the elliptic problem \eqref{D2N-theta} associated to $G_\theta[\epsilon b]\phi$, we observe that $\overline{\Phi}$ is the solution related to $G_{-\theta}[\epsilon b]\overline{\phi}$, hence
\[
G_{-\theta}[\epsilon b]\overline{\phi} = \overline{G_{\theta}[\epsilon b] \phi}.
\]
An eigenpair $(\lambda_n,\phi_n)$ of $G_\theta[\epsilon b]$ gives rise to an eigenpair $(\lambda_n,\overline{\phi_n})$ of $G_{-\theta}[\epsilon b]$, which implies the evenness of $\lambda_n$. It is thus sufficient to restrict the study of the eigenvalues to $\theta\in[0,1/2]$.

\section{Analyticity and general properties of the spectrum of $G[\epsilon b]$}\label{sec:3}

\subsection{Flat bottom}\label{sec:fond-plat}
When the bottom is flat, $\epsilon=0$, the eigenvalues of $G_\theta[0]$ are 
\begin{equation*} 
 \kappa_p(\theta) = (p+\theta)\tanh(p+\theta)
\end{equation*}
for $p \in \Z$ and Bloch parameter $\theta \in (-\frac12,\frac12]$. 
The associated eigenfunctions are $e^{ipx}$, where the solution of the elliptic problem \eqref{D2N-theta} (for $\epsilon=0$ and $\phi=e^{ipx}$) is 
\begin{equation} \label{phi-p}
\Phi_p(\theta,x,z)=e^{ip x}\frac{\cosh ((p+\theta)(z+1))}{\cosh (p+\theta)} .
\end{equation}
Eigenvalues are simple for $-1/2 < \theta < 0$ and 
$0 < \theta < 1/2$. For $ \theta = 0, 1/2$, the eigenvalues $\kappa_p(\theta)$ have multiplicity two.

When reordered appropriately by their size, the eigenvalues and eigenfunctions of $G_\theta[0]$ are given as (See Figure~\ref{fig}.a):
\begin{align*}
 & \textrm{For} \ & -\tfrac{1}{2} \le \theta < 0, \quad 
 & \lambda_{2p}^{0} (\theta) = \kappa_{-p}(\theta);
 \ &\psi_{2p}(\theta,x) = \tfrac1{\sqrt{2\pi}} e^{-ipx} , \\
 & \textrm{for} \ & 0 \le \theta \le \tfrac{1}{2}, \quad 
 & \lambda_{2p}^{0} (\theta) = \kappa_{p}(\theta);
 \ &\psi_{2p}(\theta,x) = \tfrac1{\sqrt{2\pi}}e^{ipx} , 
\end{align*}
and
\begin{align*}
 & \textrm{for} \ & -\tfrac{1}{2} \le \theta < 0, \quad 
 & \lambda_{2p-1}^{0} (\theta) = \kappa_{p}(\theta); \ &\psi_{2p-1}(\theta,x) = \tfrac1{\sqrt{2\pi}}e^{ipx}, \\
 & \textrm{for} \ & 0 \le \theta \le \tfrac{1}{2}, \quad 
 & \lambda_{2p-1}^{0} (\theta) = \kappa_{-p}(\theta);
 \ &\psi_{2p-1}(\theta,x) = \tfrac1{\sqrt{2\pi}}e^{-ipx} .
\end{align*}
The eigenvalues are continuous functions of the Bloch parameter $\theta\in (-1/2,1/2]$.

As explained in Remark~\ref{rem-eigen-resolvent}, the eigenvalues of the resolvent $(1+G_\theta[0])^{-1}$ are 
\[
\tau_p^{0}(\theta) = (1+\lambda_p^{0}(\theta) )^{-1}
\]
with the same eigenfunctions.
As it will be needed in Sections~\ref{section:4} and \ref{section:5} we conclude this section by discussing the application $R_{p,\theta}$ defined on $L^2(\T_{2\pi})$ as
\begin{equation*}
R_{p,\theta} f :=\Big((1+G_\theta[0])^{-1} - \tau_{p}^{0}(\theta)\Big) f.
\end{equation*}
For $\theta =0$ and any $p\in \N^*$, $\tau_{2p}^{0}(0)$ is of multiplicity two, with the eigenfunctions $\psi_{2p}^{0}(0,x)=(2\pi)^{-1/2}e^{ipx}$ and $\psi_{2p-1}^{0}(0,x)=(2\pi)^{-1/2}e^{-ipx}$. Therefore, for any $f\in L^2(\T_{2\pi})$, we have
\begin{equation*}
R_{2p,0} f = \sum_{k=0}^{+\infty} ( \tau_{k}^{0}(0)- \tau_{2p}^{0}(0) ) ( f , \psi_{k}^{0}(0,x) ) \psi_{k}^{0}(0,x),
\end{equation*}
that is, for $g\in H^1(\T_{2\pi})$, the equation
\[
R_{2p,0} f = g 
\]
has a solution if and only if 
\[
g \in E_{\lambda_{2p}^0(0)}^\bot := \vecspan(\psi_{2p}^{0}(0,\cdot), \psi_{2p-1}^{0}(0,\cdot))^\bot \subset L^2(\T_{2\pi})
\]
i.e., if and only if 
\begin{equation}\label{ortho-0}
\int_{\T_{2\pi}} g(x) e^{-ipx}\dd x=\int_{\T_{2\pi}} g(x) e^{ipx}\dd x =0.
\end{equation}
In other words, $R_{2p,0}$ induces an automorphism on $E_{\lambda_{2p}^0(0)}^\bot$ with an inverse defined by
\begin{equation}\label{operateur-R_pinv}
R_{2p,0}^{-1}g := \sum_{k=0, k\neq 2p,2p-1}^{+\infty}\frac{( g, \psi_{k}^{0}(0,x) )}{\tau_{k}^{0}(0)- \tau_{2p}^{0}(0) }\psi_{k}^{0}(0,x).
\end{equation}
$R_{2p,0}^{-1}$ is then a bounded operator from $E_{\lambda_{2p}^0(0)}^\bot$ to $E_{\lambda_{2p}^0(0)}^\bot \cap H^1(\T_{2\pi})$. We note also that if $\Phi$ is solution of the Laplace problem associated to $G_0[0]f$ (i.e. such that $\Phi_{\vert_{z=0}} =f$) then
\[
\| \Phi \|_{H^{3/2}(S)} \leq C \|f\|_{H^1(\T_{2\pi})} \leq C_p \|g\|_{L^2(\T_{2\pi})}.
\]

Similarly, for $p\in \N$, $R_{2p,\frac12} f = g $ has a solution if and only if
\[
\int_{\T_{2\pi}} g(x) e^{-ipx}\dd x=\int_{\T_{2\pi}} g(x) e^{i(p+1)x}\dd x =0,
\]
which allows also us to construct $R_{2p,\tfrac12}^{-1}$ on $E_{\lambda_{2p}(\frac12,0)}$.

\subsection{Analyticity of the resolvent of $G_{\theta}[\epsilon b]$ in $\epsilon$ and $\theta$}\label{sec:3.2}

It is known that the Dirichlet-Neumann operator $b\mapsto G[b]$ is analytic with respect to the shape of the bottom (see \cite[Appendix A]{Lannes}). These results do not directly apply since we want to keep track of the dependence in the Bloch parameter. The structure of the forthcoming proof is similar to what is done in \cite[Appendix A]{Lannes} but the problem is much simpler since we study analyticity with respect to a real parameter $\epsilon$ instead of studying the dependence with respect to the whole bottom function.

The first step to study the behavior of $G_\theta[\epsilon b]$ with respect to $\epsilon$ is to straighten the fluid domain in order to see explicitly the $\epsilon$-dependence. We choose one of the simplest way to straighten $\omega_\epsilon$ to $S=\T_{2\pi}\times(-1,0)$: Let $\Sigma : S \longrightarrow \omega_\epsilon$ be the diffeomorphism defined by $\Sigma(x,z) = (x,z-\epsilon z b(x))$.

\begin{proposition}\label{proposition_formulation_resolvant_S}
Let $b\in C^2(\T_{2\pi})$ and $\epsilon_0>0$ given in Proposition~\ref{proposition_variation_formulation_resolvent}. For all $\theta\in [-\frac12-\epsilon_0,\frac12+\epsilon_0]$, $\epsilon\in [-\epsilon_0,\epsilon_0]$ and $\xi \in L^2(\T_{2\pi})$, then the function $\tilde \Phi(x,z) = \Phi(\Sigma(x,z)) \in H^1(S)$ is a solution of
\begin{equation}\label{equation_resolvant_S}
\left\{
\begin{aligned}
& -\div (P(\Sigma)\nabla\tilde \Phi ) - 2i\theta\Bigg(e_1 +\epsilon
\begin{pmatrix} -b(x) \\ z b'(x) \end{pmatrix}
\Bigg)\cdot \nabla \tilde \Phi + \theta^2(1-\epsilon b(x))\tilde \Phi = 0 \quad \text{in } S, \\
& (P(\Sigma)\nabla \tilde \Phi)\cdot e_z - i\theta \epsilon b'(x) \tilde \Phi = 0 \quad \text{on } \{z = -1\} ,\\
& (P(\Sigma)\nabla \tilde \Phi)\cdot e_z + \tilde \Phi = \xi \quad \text{on } \{z = 0\},
\end{aligned}\right.
\end{equation}
if and only if $\Phi$ is a solution of \eqref{equation_resolvent_omega_epsilon}, where 
\begin{equation*}
P(\Sigma) = I_2 + Q(\Sigma) \quad \text{and} \quad Q(\Sigma)= \epsilon
\begin{pmatrix}
 -b(x) & zb'(x) \\ zb'(x) & \frac{b(x)+\epsilon(zb'(x))^2}{1-\epsilon b(x)}
\end{pmatrix}.
\end{equation*}
Moreover, we have 
\begin{equation*}
(1+G_\theta[\epsilon b])^{-1}\xi=\tilde \Phi_{\vert_{z=0}}.
\end{equation*}
\end{proposition}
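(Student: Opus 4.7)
The idea is to perform a change of variables in the variational formulation established in Proposition~\ref{proposition_variation_formulation_resolvent}. Choosing $\epsilon_0$ so that $\epsilon_0\|b\|_{L^\infty}<1$, the map $\Sigma:S\to\omega_\epsilon$, $(x,z)\mapsto(x,z-\epsilon zb(x))$, is a $C^2$-diffeomorphism with Jacobian determinant $1-\epsilon b(x)>0$, sending the top $\{z=0\}$ of $S$ identically onto $\gamma$ and $\{z=-1\}$ onto the curved bottom. Consequently $\Phi\mapsto\tilde\Phi:=\Phi\circ\Sigma$ is a bijective bicontinuous map from $H^1(\omega_\epsilon)$ onto $H^1(S)$, with $\tilde\Phi_{|z=0}=\Phi_{|z=0}$, so the trace identity $(1+G_\theta[\epsilon b])^{-1}\xi=\tilde\Phi_{|z=0}$ follows from Proposition~\ref{proposition_variation_formulation_resolvent} as soon as the equivalence of the two boundary value problems is proved.

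First I would apply the chain rule $\nabla_{X,Z}\Phi\circ\Sigma=(D\Sigma)^{-T}\nabla_{x,z}\tilde\Phi$ and substitute into the bilinear form \eqref{definition_a}. The change of variable produces the Jacobian factor $1-\epsilon b(x)$, and a direct matrix computation shows
\begin{equation*}
(1-\epsilon b(x))\,(D\Sigma(x,z))^{-1}(D\Sigma(x,z))^{-T}=P(\Sigma),
\end{equation*}
so the quadratic gradient term transforms into $\int_S P(\Sigma)\nabla\tilde\Phi\cdot\nabla\overline{\tilde\Psi}$. The $\theta^2$-term becomes $\int_S \theta^2(1-\epsilon b)\tilde\Phi\overline{\tilde\Psi}$ and the surface integral $\int_\gamma\Phi\overline{\Psi}$ is unchanged as $\int_\gamma\tilde\Phi\overline{\tilde\Psi}$ since $\Sigma_{|z=0}=\mathrm{id}$. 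For the first-order $\theta$-terms, the Jacobian contributes the factor $1-\epsilon b$ while the cross-term $\tfrac{\epsilon zb'}{1-\epsilon b}\partial_z\tilde\Phi$ in $\partial_X\Phi$ produces an extra piece $i\theta\epsilon zb'(\tilde\Phi\partial_z\overline{\tilde\Psi}-\overline{\tilde\Psi}\partial_z\tilde\Phi)$.

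Next, I would identify the transported variational problem as the weak form of \eqref{equation_resolvant_S}. Integrating by parts against $\tilde\Psi\in C_c^\infty(S)$, the first-order $\theta$-contributions reorganize into $-2i\theta\bigl(e_1+\epsilon(-b,zb')^T\bigr)\cdot\nabla\tilde\Phi$, since the derivative $\partial_x(1-\epsilon b)=-\epsilon b'$ produced by the horizontal integration by parts exactly cancels the zeroth-order piece coming from the bulk integration by parts of the vertical cross-term; this delivers the bulk PDE. Testing against $\tilde\Psi$ with nonzero trace on $\{z=-1\}$, the boundary contribution of the $z$-integration by parts of $i\theta\epsilon zb'(\tilde\Phi\partial_z\overline{\tilde\Psi}-\overline{\tilde\Psi}\partial_z\tilde\Phi)$, localized there since the weight $z$ vanishes on $\gamma$, is exactly $i\theta\epsilon b'(x)\tilde\Phi(x,-1)\overline{\tilde\Psi(x,-1)}$, which combines with the conormal term $-(P(\Sigma)\nabla\tilde\Phi)\cdot e_z$ coming from $\operatorname{div}(P\nabla\tilde\Phi)$ to yield the bottom condition $(P\nabla\tilde\Phi)\cdot e_z-i\theta\epsilon b'\tilde\Phi=0$. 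At $\gamma$, the conormal contribution combined with the surface integral and the source $L(\tilde\Psi)=\int_\gamma\xi\overline{\tilde\Psi}$ yields $(P\nabla\tilde\Phi)\cdot e_z+\tilde\Phi=\xi$. The converse implication is immediate since $\Sigma$ is a diffeomorphism.

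\textbf{Main obstacle.} The argument is essentially a single computation, so the only real difficulty lies in the bookkeeping of the chain-rule cross term $\tfrac{\epsilon zb'}{1-\epsilon b}\partial_z\tilde\Phi$. The delicate point is that the $z$-integration by parts of the resulting antisymmetric expression produces a boundary contribution exclusively at $\{z=-1\}$ (and none on $\gamma$, thanks to the factor $z$): it is this boundary contribution that reconstitutes the transverse bottom condition of the curved domain once combined with the conormal derivative of $P(\Sigma)\nabla\tilde\Phi$. Keeping careful track of the sign of the outward normal on the bottom of $S$ (namely $-e_z$) and of the compensation between the $(1-\epsilon b)\partial_x$ and $\epsilon zb'\partial_z$ contributions is the only nontrivial aspect of the verification.
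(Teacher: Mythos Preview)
Your proposal is correct and follows essentially the same approach as the paper, which leaves the computation to the reader but indicates that one substitutes $\tilde\Phi=\Phi\circ\Sigma$ and works with $D\Sigma$ and $P(\Sigma)$; the variational route you take is precisely how the paper's subsequent remark defines $a^{R,S}_{\theta,\epsilon}$ (via $a^{R,S}_{\theta,\epsilon}(\Phi\circ\Sigma,\Psi\circ\Sigma)=a^R_{\theta,\epsilon}(\Phi,\Psi)$). Your identification of the key cancellation (the vector field $e_1+\epsilon(-b,zb')^T$ is divergence-free, yielding the clean $-2i\theta$ first-order bulk term) and of the lone $\theta$-boundary contribution at $z=-1$ is accurate.
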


The proof is a little long and we leave it to the reader. It consists in replacing $\tilde \Phi$ by $\Phi(\Sigma)$ in \eqref{equation_resolvant_S} and inserting the expression of $D\Sigma$ and $P(\Sigma)$. For more details about the straightening, we refer to \cite[Section 2.2.3, page 46]{Lannes}.

\begin{remark}\label{remark_variational_formulation_S}
The unique solution $\tilde \Phi$ of \eqref{equation_resolvant_S} is given by
\begin{equation*}
 a^{R,S}_{\theta,\epsilon}(\tilde \Phi,\tilde \Psi) = L(\tilde\Psi) := \int_{\gamma} \xi \overline{\tilde\Psi(\cdot,0)} \quad \text{for all} \; \tilde\Psi \in H^1(S) 
\end{equation*}
where $a^{R,S}_{\theta,\epsilon}(\Phi(\Sigma),\Psi(\Sigma)) = a^R_{\theta,\epsilon}( \Phi,\Psi)$, with $a^R_{\theta,\epsilon}$ defined in the proof of Proposition~\ref{proposition_variation_formulation_resolvent}.
After the change of variable, we obtain the natural sesquilinear form associated to \eqref{equation_resolvant_S}:
\begin{equation}\label{definition_a_S}
\begin{aligned}
a^{R,S}_{\theta,\epsilon}(\tilde\Phi,\tilde\Psi) 
=& \int_S \Big[P(\Sigma)\nabla \tilde \Phi \cdot \nabla \overline{\tilde \Psi} +
i\theta \Big(e_1 +\epsilon\begin{pmatrix}-b(x) \\z b'(x)\end{pmatrix}\Big) 
\cdot( \tilde \Phi \nabla \overline{\tilde \Psi}-\overline{\tilde \Psi} \nabla \tilde \Phi)\Big]\\
& + \int_S \theta^2 (1-\epsilon b)\tilde\Phi \overline{\tilde\Psi} + \int_\gamma \tilde\Phi \overline{\tilde\Psi} 
\end{aligned}
\end{equation}
From the proof of Proposition~\ref{proposition_variation_formulation_resolvent}, we can state the uniform coercivity of $a^{R,S}$: there exists $\epsilon_0$ and $C$ such that for all $\theta \in [-\frac{1}{2}-\epsilon_0,\frac{1}{2}+\epsilon_0]$ and $\epsilon \in [-\epsilon_0,\epsilon_0]$,
\begin{equation*}
a^{R,S}_{\theta,\epsilon}(U,U) \geq C\norm{U}^2_{H^1(S)}. 
\end{equation*}
\end{remark}

The main advantage of \eqref{equation_resolvant_S} is to work on a fixed domain $S=\T_{2\pi}\times(-1,0)$ and to identify the influences of $\theta$ and $\epsilon$. 
For instance, we adapt in the following remark the end of Remark~\ref{rem-eigen-resolvent} with an elliptic problem satisfied in $S$.

\begin{remark}
If $(\lambda_n,\psi_n)$ is an eigenpair of $G_{\theta}[\epsilon b]$, then $(\tau_n,\psi_n)$ is an eigenpair of the resolvent operator, and the $\Phi_n$, solution of \eqref{D2N-theta} with $\phi=\psi_n$, is the solution of \eqref{equation_resolvent_omega_epsilon} with $\xi=(1+\lambda_n)\psi_n$. This gives rise to $\tilde\Phi_n = \Phi_n(\Sigma)\in H^{3/2}(S)$, solution of \eqref{equation_resolvant_S} with $\xi=(1+\lambda_n)\psi_n$ and such that $\tilde\Phi_n\vert_{z=0}=\psi_n$. To summarize, an eigenfunction of $G_\theta[\epsilon b]$ is the trace on $\gamma$ of a function $\Phi\in H^{3/2}(S)$ satisfying 
\begin{equation}\label{equation_eigenvalue_problem_S1}
\left\{
\begin{aligned}
& -\div (P(\Sigma)\nabla \Phi ) - 2i\theta\Bigg(e_1 +\epsilon
\begin{pmatrix} -b(x) \\ z b'(x) \end{pmatrix}
\Bigg)\cdot \nabla \Phi + \theta^2(1-\epsilon b)\Phi = 0 \quad \text{in } S, \\
& (P(\Sigma)\nabla \Phi)\cdot e_z - i\theta \epsilon b' \Phi = 0 \quad \text{on } \{z = -1\}, \\
& (P(\Sigma)\nabla \Phi)\cdot e_z = \lambda(\theta,\epsilon)\Phi \quad \text{on } \{z = 0\},
\end{aligned}\right.
\end{equation}
where $\lambda(\theta,\epsilon)$ is its associated eigenvalue.
From Remark~\ref{remark_variational_formulation_S}, the above system for $(\Phi, \lambda)$ is equivalent to
\begin{equation}\label{equation_eigenvalue_problem_S2}
a^{R,S}_{\theta,\epsilon}(\Phi,V) = (1+\lambda(\theta,\epsilon)) \int_{\gamma} \Phi \overline{V} \quad \text{for all} \; V \in H^1(S).
\end{equation}
A detailed study of this system will lead in Sections~\ref{section:4} and \ref{section:5} to the construction of approximate eigenvalues of the resolvent operator for $\theta$ close to $0$ and $\frac{1}{2}$. 
\end{remark}

The explicit dependence on $(\theta,\epsilon)$ of the resolvent operator also allows us to prove the analyticity of the resolvent, with respect to $\varepsilon$, uniformly in $\theta\in [-\frac12-\varepsilon_{0},\frac12+\varepsilon_{0}]$.

\begin{proposition}\label{proposition_analyticity_in_epsilon}
There exist $C_0, r > 0$ depending only on $\norm{b}_{W^{1,\infty}}$ such that 
\begin{equation*}
\epsilon \in (-r,r) \mapsto (1+ G_\theta [\epsilon b])^{-1} \in \cal{L}(L^2(\T_{2\pi}); H^1(\T_{2\pi})) 
\end{equation*}
is analytic. More precisely there exists bounded operators $R_k(\theta) \in \cal{L}(L^2(\T_{2\pi}); H^1(\T_{2\pi}))$ such that 
\begin{equation*}
\| R_k(\theta) \xi \|_{H^1(\T_{2\pi})} \leq C_0 \|\xi \|_{L^2(\T_{2\pi})}r^{-k}, \quad \forall \xi \in L^2(\T_{2\pi}),\ \theta\in [-\tfrac12-\varepsilon_{0},\tfrac12+\varepsilon_{0}] \text{ and }k\in \N 
\end{equation*}
and
\begin{equation*}
(1+ G_\theta [\epsilon b])^{-1} = \sum_{k=0}^{+\infty} \epsilon^k R_k(\theta)
\end{equation*}
where the series converges in $\cal{L}(L^2(\T_{2\pi}); H^1(\T_{2\pi}))$.
\end{proposition}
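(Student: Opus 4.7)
The plan is to transfer the problem to the fixed domain $S$ via the straightening of Proposition~\ref{proposition_formulation_resolvant_S}, where the sesquilinear form $a^{R,S}_{\theta,\epsilon}$ exposes the $\epsilon$-dependence explicitly, and then to apply a Neumann-series / analytic perturbation argument uniform in $\theta$.

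First, I would expand $a^{R,S}_{\theta,\epsilon}$ as a series $\sum_{k\geq 0}\epsilon^k a_k^{(\theta)}$ of continuous sesquilinear forms on $H^1(S)\times H^1(S)$. The only non-polynomial dependence on $\epsilon$ in \eqref{definition_a_S} enters through the $(2,2)$-entry of $Q(\Sigma)$, namely $(b+\epsilon(zb')^2)/(1-\epsilon b)$, which admits a geometric expansion absolutely convergent for $|\epsilon|\,\|b\|_{L^\infty}<1$. Reading off uniform bounds $|a_k^{(\theta)}(\Phi,\Psi)|\leq C_1 M^k \|\Phi\|_{H^1(S)}\|\Psi\|_{H^1(S)}$ with $M$ depending only on $\|b\|_{W^{1,\infty}}$, one obtains by Riesz representation bounded operators $\mathcal{A}_k(\theta):H^1(S)\to H^1(S)^*$ with the same norm bounds, and a series $\mathcal{A}(\theta,\epsilon)=\sum_{k\geq 0}\epsilon^k \mathcal{A}_k(\theta)$ converging in $\mathcal{L}(H^1(S),H^1(S)^*)$ on the disk $|\epsilon|<1/M$, uniformly in $\theta\in[-\tfrac12-\epsilon_0,\tfrac12+\epsilon_0]$.

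Next, I would invoke the uniform coercivity of Remark~\ref{remark_variational_formulation_S}: $\mathcal{A}_0(\theta)$ is an isomorphism with $\|\mathcal{A}_0(\theta)^{-1}\|\leq c^{-1}$ uniformly in $\theta$, so a Neumann series shows that $\mathcal{A}(\theta,\epsilon)^{-1}$ exists and is analytic in $\epsilon$ on some disk $|\epsilon|<r$ with $r$ depending only on $\|b\|_{W^{1,\infty}}$, uniformly bounded in $\theta$. Since the source $L_\xi(\Psi)=\int_\gamma \xi\overline{\Psi(\cdot,0)}$ satisfies $\|L_\xi\|_{H^1(S)^*}\leq C_2 \|\xi\|_{L^2}$, the solution $\tilde\Phi_\epsilon:=\mathcal{A}(\theta,\epsilon)^{-1} L_\xi$ is analytic in $\epsilon$ with values in $H^1(S)$ and $\|\tilde\Phi_\epsilon\|_{H^1(S)}\leq C_3\|\xi\|_{L^2}$, uniformly in $(\theta,\epsilon)$. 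Elliptic regularity applied to \eqref{equation_resolvant_S}, as in the footnote of Proposition~\ref{proposition_variation_formulation_resolvent}, upgrades this to $\tilde\Phi_\epsilon\in H^{3/2}(\T_{2\pi}\times[-\tfrac12,0])$ with bounds uniform in $(\theta,\epsilon)$; hence the trace at $z=0$ lies in $H^1(\T_{2\pi})$ and $(1+G_\theta[\epsilon b])^{-1}\xi=\tilde\Phi_\epsilon|_{z=0}$ depends analytically on $\epsilon$ with values in $H^1(\T_{2\pi})$. Cauchy estimates on circles $|\epsilon|=\rho<r$ then define the operators $R_k(\theta)$ and deliver the stated bound $\|R_k(\theta)\xi\|_{H^1(\T_{2\pi})}\leq C_0\|\xi\|_{L^2}r^{-k}$.

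The main obstacle will be keeping the final $H^1(\T_{2\pi})$ upgrade compatible with analyticity in $\epsilon$ and uniformity in $\theta$: the natural trace space of $H^1(S)$ is only $H^{1/2}(\T_{2\pi})$, so one must rely on the extra Sobolev regularity supplied by elliptic regularity for the straightened PDE with analytically varying coefficients, and verify that the underlying constants do not degrade with $\epsilon$ on the shrunk disk. A safe bookkeeping route is to differentiate the series term by term: each $R_k(\theta)\xi$ is identified as the trace of a solution to a linear elliptic boundary value problem with $L^2$ data built recursively from lower-order coefficients, to which the uniform $H^{3/2}$ estimate near $z=0$ applies at each step, yielding bounds matching the Cauchy bounds already obtained in $H^1(S)$.
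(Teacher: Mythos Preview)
Your proposal is correct and lands on the same argument as the paper. The paper expands $P(\Sigma)=I_2+\sum_{k\ge1}\epsilon^k Q_k$ explicitly, plugs $\Phi=\sum_k\epsilon^k\Phi_k$ into \eqref{equation_resolvant_S}, and obtains for each $k\ge1$ an elliptic problem for $\Phi_k$ with the \emph{unperturbed} form $a^R_{\theta,0}$ and right-hand side built from $\Phi_0,\dots,\Phi_{k-1}$; elliptic regularity then gives $\|\Phi_k\|_{H^{3/2}(S)}\le C_1\sum_{j=1}^k\|b\|_{W^{1,\infty}}^j\|\Phi_{k-j}\|_{H^{3/2}(S)}$, from which an induction yields $\|\Phi_k\|_{H^{3/2}(S)}\le C_1 r^{-k}\|\xi\|_{L^2}$ and one sets $R_k(\theta)\xi=\Phi_k(\cdot,0)$. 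This is exactly your ``safe bookkeeping route''. Your initial Neumann-series packaging at the level of $\mathcal{A}(\theta,\epsilon):H^1(S)\to H^1(S)^*$ is a legitimate reorganization, and you correctly flag that it only delivers $H^1(S)$ (hence $H^{1/2}$ traces) without the additional step; once you pass to the term-by-term recursion against the fixed operator $a^R_{\theta,0}$ to recover $H^{3/2}$, the two arguments coincide.
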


\begin{proof}
Let us fix $\xi \in L^2(\T_{2\pi})$ and $\Phi$ the associated solution of \eqref{equation_resolvant_S}. We write the expansion of $P(\Sigma)$ in terms of $\epsilon$:
\[
P(\Sigma) = I_2 + \sum_{k=1}^{+\infty} \epsilon^k Q_k 
\]
 with 
\begin{equation}\label{Q1-Qk}
Q_1 = 
\begin{pmatrix}
 -b(x) & zb'(x) \\ zb'(x) & b(x)
\end{pmatrix}
\text{ and }
Q_k = 
\begin{pmatrix}
 0 & 0 \\ 0 & b^k(x) + (zb'(x))^2b^{k-2}(x)
\end{pmatrix} \text{ for } k \geq 2.
\end{equation}
Including this expression in \eqref{equation_resolvant_S}, we get that $\Phi$ solves:
\begin{equation*}
\left\{
\begin{aligned}
& -\Delta\Phi -2i\theta \partial_x \Phi + \theta^2 \Phi \\
&- \epsilon 2i\theta \begin{pmatrix}
-b(x) \\
z b'(x)
\end{pmatrix}\cdot \nabla \Phi - \epsilon \theta^2 b(x) \Phi 
- \sum_{k=1}^{+\infty}\epsilon^k\div(Q_k\nabla \Phi) = 0 \quad \text{in } S \\
&\partial_z \Phi -\epsilon i\theta b'(x) \Phi +\sum_{k=1}^{+\infty}\epsilon^k(Q_k \nabla \Phi)\cdot e_z = 0 \quad \text{on } \{z = -1\} \\
& \partial_z \Phi+ \Phi +\sum_{k=1}^{+\infty}\epsilon^k(Q_k \nabla \Phi)\cdot e_z = \xi \quad \text{on } \{z = 0\}.
\end{aligned}\right.
\end{equation*}
Plugging inside an expansion of $\Phi = \sum_{k=0}^{+\infty} \epsilon^k \Phi_k$, we identify the terms of order 1 to write
\begin{equation*}
\left\{
\begin{aligned}
& -\Delta\Phi_0 -2i\theta \partial_x \Phi_0 + \theta^2 \Phi_0 = 0 \quad \text{in } S \\
& \partial_z \Phi_0 = 0 \quad \text{on } \{z = -1\} \\
& \partial_z \Phi_0 + \Phi_0 = \xi \quad \text{on } \{z = 0\}.
\end{aligned}\right.
\end{equation*}
For terms of order $\epsilon^k$ with $k \geq 1$ we obtain
\begin{equation}\label{equation_phi_k}
\left\{
\begin{aligned}
& -\Delta\Phi_k -2i\theta \partial_x \Phi_k + \theta^2 \Phi_k \\
&=
2i\theta\begin{pmatrix}
-b(x) \\
z b'(x)
\end{pmatrix}\cdot \nabla \Phi_{k-1} + \theta^2 b(x) \Phi_{k-1} 
+ \sum_{j=1}^k\div(Q_j\nabla \Phi_{k-j}) \quad \text{on } S \\
& \partial_z \Phi_k = i\theta b'(x) \Phi_{k-1} -\sum_{j=1}^k(Q_j \nabla \Phi_{k-j})\cdot e_z \quad \text{on } \{z = -1\} \\
& \partial_z \Phi_k + \Phi_k = - \sum_{j=1}^k(Q_j \nabla \Phi_{k-j})\cdot e_z \quad \text{on } \{z = 0\}.
\end{aligned}\right.
\end{equation}
These systems correspond to the elliptic problem associated to $(1+G_\theta[0])^{-1}$, and as in Proposition~\ref{proposition_variation_formulation_resolvent}, we identify the variational formulation 
\begin{equation*}
a^R_{\theta,0}(\Phi,\Psi) = L_k(\Psi) 
\end{equation*}
where $a^R_{\theta,\epsilon}$ is defined in \eqref{definition_a}, $L_0(\Psi) = \int_S \xi\overline{\Psi(\cdot,0)}$, and for $k \geq 1$,
\begin{equation*}
L_k(\Psi) 
= i\theta \int_S \begin{pmatrix}-b(x) \\z b'(x)\end{pmatrix}\cdot(\nabla \Phi_{k-1} \overline{\Psi} - \Phi_{k-1}\nabla\overline{\Psi}) 
+ \theta^2 \int_S b \Phi_{k-1}\overline{\Psi} \\
- \sum_{j=1}^k \int_S Q_j \nabla \Phi_{k-j}\cdot \nabla \overline{\Psi}.
\end{equation*}
From Proposition~\ref{proposition_variation_formulation_resolvent}, these systems have a unique solution in $H^1(S)$. By elliptic regularity,
\[
\| \Phi_0 \|_{H^{3/2}(S)} \leq C_1 \| \xi\|_{L^2(\T_{2\pi})}
\]
and, denoting $F_{k}$ the term in the right-hand side of the first equation of \eqref{equation_phi_k},
\[
\| \Phi_k \|_{H^{3/2}(S)} \leq C \| F_k \|_{H^{-1/2}(S)}
\leq C_1\sum_{j=1}^k \|b\|_{W^{1,\infty}}^j \| \Phi_{k-j} \|_{H^{3/2}(S)}
\]
where $C_1$ is independent of $\theta \in [-\frac{1}{2}-\epsilon_0,\frac{1}{2}+\epsilon_0]$.

Setting $r=\min(1/(2\|b\|_{W^{1,\infty}}) ; 1/(2C_{1}\|b\|_{W^{1,\infty}})) $, one proves by induction that
\[
\| \Phi_k \|_{H^{3/2}(S)} \leq C_{1} r^{-k} \| \xi\|_{L^2(\T_{2\pi})} \quad \forall k\geq 0.
\]
Setting $R_{k}(\theta) :\ \xi\mapsto \Phi_k(\cdot,0)$ ends the proof.
\end{proof}

\begin{remark}
 As expected, we note in the previous proof that $R_{0}=(1+G_{\theta}[0])^{-1}$.
\end{remark}

Following the strategy of the previous proof and writing $\theta^2 = \theta_0^2 + 2\theta_0(\theta-\theta_0) + (\theta-\theta_0)^2$, we may also prove the analyticity with respect to $\theta$ uniformly in $\epsilon$.

\begin{proposition}\label{proposition_analyticity_in_theta}
There exists $C_0, r > 0$ depending only on $\| b\|_{W^{1,\infty}}$ such that 
\[
\theta \in (-\tfrac12 -r,\tfrac12+r) \mapsto (1+ G_\theta [\epsilon b])^{-1} \in \cal{L}(L^2(\T_{2\pi}); H^1(\T_{2\pi})) 
\]
is analytic. More precisely, there exist bounded operators $\tilde R_k[\theta_0,\epsilon] \in \cal{L}(L^2(\T_{2\pi}); H^1(\T_{2\pi}))$ such that 
\begin{equation*}
\| \tilde R_k[\theta_0,\epsilon] \xi \|_{H^1(\T_{2\pi})} \leq C_0 \|\xi \|_{L^2(\T_{2\pi})}r^{-k}, \quad \forall \xi \in L^2(\T_{2\pi}),\ \forall k\in \N ,
\end{equation*}
for all $\theta_0\in [-\frac12,\frac12]$, $\epsilon\in [-\epsilon_0,\epsilon_0]$, and
\begin{equation*}
(1+ G_\theta [\epsilon b])^{-1} = \sum_{k=0}^{+\infty} \tilde R_k[\theta_0,\epsilon](\theta-\theta_0)^k
\end{equation*}
where the series converges in $\cal{L}(L^2(\T_{2\pi}); H^1(\T_{2\pi}))$.
\end{proposition}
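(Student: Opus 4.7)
The plan is to follow verbatim the strategy of Proposition~\ref{proposition_analyticity_in_epsilon}, now fixing $\epsilon$ and varying $\theta$ around a base point $\theta_0 \in [-\tfrac{1}{2},\tfrac{1}{2}]$. I would continue to work with the straightened formulation \eqref{equation_resolvant_S}, so that the domain $S$ and the matrix $P(\Sigma)$ are independent of $\theta$. The crucial observation is that the $\theta$-dependence in \eqref{equation_resolvant_S} is polynomial of degree at most $2$: the first-order coefficients are linear in $\theta$, the zeroth-order bulk coefficient is $\theta^2$, and the boundary coefficient on $\{z=-1\}$ is linear in $\theta$. Writing $\mu = \theta - \theta_0$ and expanding $\theta^2 = \theta_0^2 + 2\theta_0\mu + \mu^2$, every coefficient splits explicitly into a $(\theta_0,\epsilon)$-piece plus contributions of order $\mu$ and $\mu^2$ only.

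Next, I would posit $\Phi = \sum_{k=0}^{\infty} \mu^k \Phi_k$ and identify powers of $\mu$. Matching orders yields that $\Phi_0$ solves \eqref{equation_resolvant_S} at parameter $(\theta_0,\epsilon)$ with source $\xi$, while for every $k \geq 1$ the profile $\Phi_k$ satisfies the same elliptic problem at $(\theta_0,\epsilon)$ but with zero surface datum and with bulk/bottom forcing terms that depend linearly on $\Phi_{k-1}$ and (for $k\geq 2$) on $\Phi_{k-2}$, with coefficients controlled by $\|b\|_{W^{1,\infty}}$ and $|\theta_0|$. In variational form each system reads
\begin{equation*}
a^{R,S}_{\theta_0,\epsilon}(\Phi_k, V) = \tilde L_k(V), \qquad \forall V \in H^1(S),
\end{equation*}
where $|\tilde L_k(V)| \leq C \|b\|_{W^{1,\infty}} \bigl(\|\Phi_{k-1}\|_{H^1(S)} + \|\Phi_{k-2}\|_{H^1(S)}\bigr) \|V\|_{H^1(S)}$ with $C$ independent of $\theta_0$ and $\epsilon$. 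Invoking the uniform coercivity of $a^{R,S}_{\theta_0,\epsilon}$ recorded in Remark~\ref{remark_variational_formulation_S}, Lax-Milgram provides a unique $\Phi_k \in H^1(S)$, and elliptic regularity (as at the end of the proof of Proposition~\ref{proposition_analyticity_in_epsilon}) upgrades this to the two-term recursive bound
\begin{equation*}
\|\Phi_k\|_{H^{3/2}(S)} \leq C_1 \|b\|_{W^{1,\infty}} \bigl(\|\Phi_{k-1}\|_{H^{3/2}(S)} + \|\Phi_{k-2}\|_{H^{3/2}(S)}\bigr),
\end{equation*}
with $C_1$ uniform in $(\theta_0,\epsilon)$. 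A direct induction against a geometric comparison sequence then gives $\|\Phi_k\|_{H^{3/2}(S)} \leq C_0 r^{-k} \|\xi\|_{L^2(\T_{2\pi})}$ for some $r>0$ depending only on $\|b\|_{W^{1,\infty}}$. Setting $\tilde R_k[\theta_0,\epsilon]\xi := \Phi_k(\cdot,0) \in H^1(\T_{2\pi})$, the series converges in $\mathcal{L}(L^2(\T_{2\pi});H^1(\T_{2\pi}))$ for $|\theta-\theta_0|<r$, and by uniqueness in \eqref{equation_resolvant_S} its sum agrees with $(1+G_\theta[\epsilon b])^{-1}\xi$.

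The main obstacle is bookkeeping rather than analysis: one must carefully enumerate the bulk, $\{z=-1\}$ and $\{z=0\}$ contributions produced by $\theta = \theta_0 + \mu$ in \eqref{equation_resolvant_S} to confirm that the recursion couples $\Phi_k$ only to $\Phi_{k-1}$ and $\Phi_{k-2}$ — this is the upshot of the polynomial-of-degree-$2$ structure in $\theta$ — and to check that no constant hides a dependence on $\theta_0$ or $\epsilon$. Once the uniform coercivity of $a^{R,S}_{\theta_0,\epsilon}$ from Remark~\ref{remark_variational_formulation_S} is in hand, the radius of convergence $r$ is automatically uniform in $(\theta_0,\epsilon)$, and since the base points $\theta_0$ cover the closed interval $[-\tfrac12,\tfrac12]$, the analyticity extends to the open interval $(-\tfrac12-r,\tfrac12+r)$ claimed in the statement.
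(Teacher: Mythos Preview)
Your proposal is correct and follows exactly the route the paper indicates: the paper does not write out a proof but simply says ``Following the strategy of the previous proof and writing $\theta^2 = \theta_0^2 + 2\theta_0(\theta-\theta_0) + (\theta-\theta_0)^2$, we may also prove the analyticity with respect to $\theta$ uniformly in $\epsilon$'', which is precisely the expansion and recursion you carry out. The only cosmetic point is that some of the $\mu$-order forcing terms (e.g.\ $-2i\,e_1\cdot\nabla\Phi_{k-1}$) do not carry a factor of $b$, so the bound on $\tilde L_k$ should read $C(1+\|b\|_{W^{1,\infty}})(\dots)$ rather than $C\|b\|_{W^{1,\infty}}(\dots)$; this does not affect the argument.
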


\subsection{Perturbation theory of self-adjoint operators}\label{sec:3.3}

The main tool of this section is perturbation theory of the spectrum of self-adjoint operators. We start with a general result: let $A$ be a self-adjoint operator on its domain $D(A) \subset H$, where $H$ is a separable Hilbert space, and $a_1, a_2$ real numbers in the resolvent set $\rho(A)$, with $a_1<a_2$. Let $(B_\epsilon)_{\epsilon\in (0,\epsilon_0]}$ be a family of symmetric operators on $D(A)$ such that $B_\epsilon (A+i)^{-1}$ is bounded uniformly with respect to $\epsilon$.
Set 
\begin{equation*}
 \epsilon_A = \inf_{\epsilon\in (0,\epsilon_0]} \Bigg( \frac{1}{2\|B_\epsilon(A-a_1)^{-1} \|} ;
 \frac{1}{2\|B_\epsilon(A-a_2)^{-1} \| }; \epsilon_0
 \Bigg),
\end{equation*}
which is well-defined from the hypotheses above. Indeed, by \cite[Corollary 4.6]{L22}, we have
\begin{equation*}
 \mathrm{d}(a_1, \sigma(A)) =\min_{s\in\sigma(A)} |s-a_1| = \|(A-a_1)^{-1}\|^{-1},
\end{equation*}
which implies that
\begin{equation}\label{est-BA}
\|B_\epsilon(A-a_1)^{-1} \| \le \|B_\epsilon(A+i)^{-1}\| \Big\|\frac{A+i}{A-a_1}\Big\| \le 
\|B_\epsilon(A+i)^{-1}\| 
\Big( 1+ \frac{|a_1| +1}{ \mathrm{d}(a_1, \sigma(A))} \Big).
\end{equation}

\begin{theorem} \label{th-Le}
If $\epsilon \in (-\epsilon_0,\epsilon_0) \mapsto B_\epsilon$ is analytic in $\cal{L}(D(A),H)$ (in the sense of Proposition~\ref{proposition_analyticity_in_epsilon}), then for all $|\epsilon|<\epsilon_A$, the perturbed operator $A+ \epsilon B_\epsilon$ has the following properties:
\begin{enumerate}[(a)]
 \item $a_1$ and $a_2$ are in the resolvent set of 
 $A+\epsilon B_\epsilon $ for all $|\epsilon|<\epsilon_A$.
 \item If $\sigma(A)\cap [a_1,a_2]$ is a set composed of a finite number of eigenvalues and the sum of their multiplicity is $k$, then this is also true for $\sigma(A+\epsilon B_\epsilon)\cap [a_1,a_2]$ for all $|\epsilon|<\epsilon_A$.
\end{enumerate}
\end{theorem}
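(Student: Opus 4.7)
The plan is the classical Kato--Rellich/Riesz-projector perturbation argument.

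\textbf{Self-adjointness.} The uniform bound on $\|B_\epsilon(A+i)^{-1}\|$ makes $\epsilon B_\epsilon$ an $A$-bounded symmetric perturbation whose relative bound can be made strictly less than one by shrinking $\epsilon_0$ if necessary. By Kato--Rellich, $A+\epsilon B_\epsilon$ is then self-adjoint on $D(A)$ for all $|\epsilon|<\epsilon_A$.

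\textbf{Part (a).} I would use the factorisation
\[
A+\epsilon B_\epsilon - a_j = \bigl(I+\epsilon B_\epsilon (A-a_j)^{-1}\bigr)(A-a_j).
\]
The very definition of $\epsilon_A$ forces $\|\epsilon B_\epsilon(A-a_j)^{-1}\|<1/2$ for $|\epsilon|<\epsilon_A$, hence the first factor is invertible via Neumann series, and composing with $(A-a_j)^{-1}$ yields the resolvent at $a_j$.

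\textbf{Part (b).} I would introduce the Riesz (Dunford) projector
\[
P_\epsilon = -\frac{1}{2\pi i}\oint_\Gamma (A+\epsilon B_\epsilon - z)^{-1}\,\dd z,
\]
with $\Gamma = \partial\bigl([a_1,a_2]\times[-h,h]\bigr)$ oriented counterclockwise, and prove:
\begin{enumerate}[(i)]
\item $\Gamma\subset\rho(A+\epsilon B_\epsilon)$ uniformly in $|\epsilon|<\epsilon_A$;
\item $\epsilon\mapsto P_\epsilon$ is norm-continuous on $(-\epsilon_A,\epsilon_A)$;
\item consequently $\operatorname{rank} P_\epsilon \equiv \operatorname{rank} P_0 = k$.
\end{enumerate}
For (i), I factor $A+\epsilon B_\epsilon - z = (I+\epsilon B_\epsilon(A-z)^{-1})(A-z)$, so it suffices to show $\|\epsilon B_\epsilon(A-z)^{-1}\|<1$ on $\Gamma$. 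Writing
\[
\|B_\epsilon(A-z)^{-1}\|\leq \|B_\epsilon(A-a_j)^{-1}\|\cdot\|(A-a_j)(A-z)^{-1}\|
\]
and using the spectral identity $\|(A-a_j)(A-z)^{-1}\| = \sup_{s\in\sigma(A)} |s-a_j|/|s-z|$, an elementary optimisation shows that on the vertical segments $\{\mathrm{Re}\,z=a_j\}$ this sup equals $1$, while on the horizontal segments $\{|\mathrm{Im}\,z|=h\}$ it is bounded by $\sqrt{1 + (a_2-a_1)^2/h^2}$. Taking $h$ comparable to $a_2-a_1$ (say $h=a_2-a_1$) keeps this factor $\le\sqrt2$, so combined with $\epsilon\|B_\epsilon(A-a_j)^{-1}\|<1/2$ we get $\|\epsilon B_\epsilon(A-z)^{-1}\|<\sqrt2/2<1$ on all of $\Gamma$. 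Step (ii) follows from the second resolvent identity and the uniform bounds on $\Gamma$. For step (iii), the standard fact that two projectors at norm-distance less than one have equal rank, together with the connectedness of $(-\epsilon_A,\epsilon_A)$, locks $\operatorname{rank} P_\epsilon$ at $k$ throughout; since $P_\epsilon$ is the spectral projector for $\sigma(A+\epsilon B_\epsilon)\cap[a_1,a_2]$, this spectrum consists of finitely many eigenvalues with total multiplicity $k$.

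\textbf{Main obstacle.} The technical crux is making the Neumann-series inversion valid uniformly in $z\in\Gamma$ and in $|\epsilon|<\epsilon_A$: a naive rectangular contour with small $h$ fails because $\|(A-a_j)(A-z)^{-1}\|$ blows up on the horizontal edges, and one also cannot send $z$ far from the real axis since $B_\epsilon$ is unbounded. The fix is precisely the elementary optimisation above with $h$ of order $a_2-a_1$, which controls the horizontal cost by a universal constant, exactly absorbed by the factor $1/2$ hard-coded into $\epsilon_A$. Everything else is routine functional analysis.
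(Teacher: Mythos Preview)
Your argument is correct and follows the same Riesz-projector strategy as the paper: rectangular contour around $[a_1,a_2]$, factorisation $A+\epsilon B_\epsilon-z=(I+\epsilon B_\epsilon(A-z)^{-1})(A-z)$, Neumann-series inversion, and constancy of the rank of the spectral projector. The differences are cosmetic: the paper lets the contour height $M$ depend on $\epsilon$ to obtain the bound $\|\epsilon B_\epsilon(A-z)^{-1}\|\le(1+\tfrac{a_2-a_1}{2M})\tfrac{|\epsilon|}{\epsilon_A}<1$, and then uses the analyticity hypothesis on $\epsilon\mapsto B_\epsilon$ to get analyticity (not just continuity) of $P_\epsilon$; your fixed choice $h=a_2-a_1$ with the universal factor $\sqrt2$ is a bit cleaner and shows that mere continuity of $\epsilon\mapsto B_\epsilon$ already suffices for the conclusion as stated.
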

This theorem corresponds to \cite[Theorem~5.6]{L22} when $B_\epsilon$ does not depend on $\epsilon$. It is also related to \cite[Chapter~7, Theorem~1.8]{Kato}. Here, we follow
the proof of \cite[Theorem~5.6]{L22} and carefully examine that Theorem~\ref{th-Le} can be proved in the same way.
\begin{enumerate}
 \item The assertion (a) comes from a general theorem which is independent of $\epsilon$, namely applying \cite[Theorem~5.2]{L22} because $\| \epsilon B_\epsilon (A-a)^{-1}\|<1$.
 \item We consider $\cal C$ a Jordan curve in $\C$, surrounding $[a_1,a_2]$ and crossing the real axis only in $a_1$ and $a_2$ (for instance, a rectangle $\partial([a_1,a_2]\times [-M,M])$) and we prove the following estimate by using the definition of $\epsilon_A$
 \[
 \Big\| \epsilon B_\epsilon (A-z)^{-1}\Big\| \leq \Big(1+\frac{a_2-a_1}{2M}\Big) \frac{|\epsilon|}{\epsilon_A} \quad \forall z\in \cal C.
 \]
 For every $|\epsilon|<\epsilon_A$, choosing $M$ large enough allows to write $\epsilon \mapsto (A+\epsilon B_\epsilon-z)^{-1}$ as a convergent series with respect to $\epsilon$:
 \[
 (A+\epsilon B_\epsilon-z)^{-1} = (A-z)^{-1} \sum_{n\geq 0} (-1)^n \epsilon^n (B_\epsilon (A-z)^{-1})^n
 \]
but as $\epsilon \mapsto B_\epsilon$ is analytic, we deduce that $\epsilon\mapsto (A+\epsilon B_\epsilon-z)^{-1}$ is analytic, which is exactly what is needed to finish the proof in Lewin's Lectures Notes.
\item We follow the proof in \cite[Theorem~5.6]{L22} by establishing the analyticity of the spectral projector $P(\epsilon)$ and by using that the rank of an orthogonal projector is an entire and continuous function, then remaining constant.
\end{enumerate}
We will apply the above theorem with 
\[
A=G_\theta[0], \quad \epsilon B_\epsilon= G _\theta[\epsilon b]- G_\theta[0], \quad D(A)=H^1(\T_{2\pi}), \quad H=L^2(\T_{2\pi}).
\]
From the analycity of the resolvent (see Proposition~\ref{proposition_analyticity_in_epsilon}), we write
\[
1 + G_\theta[\epsilon b] = \Big(R_0( 1 + \sum_{k\geq 1} \epsilon^k R_0^{-1} R_k )\Big)^{-1}=( 1 + \sum_{k\geq 1} \epsilon^k R_0^{-1} R_k )^{-1} R_0^{-1}
\]
where we notice that $R_0^{-1}R_k$ is a bounded operator from $L^2(\T_{2\pi})$ to $L^2(\T_{2\pi})$, with a norm less than $C_0r^{-k}$. Recalling that $R_0^{-1} = 1+ G_\theta[0]$, this allows us to identify $B_\epsilon$ as an analytic function in $\cal L(H^1(\T_{2\pi});L^2(\T_{2\pi}))$ and state that the boundedness in $\cal L(L^2(\T_{2\pi}))$ of $R_0^{-1}(A+i)^{-1}$ gives that $B_\epsilon(A+i)^{-1}$ is uniformly bounded. 

The first proposition shows that, for $\theta$ not too close to $0$ or $\frac{1}{2}$, $p\in \N$ and $\epsilon$ sufficiently small (depending on $b$ and $p$), $ G_\theta[\epsilon b]$ has a simple eigenvalue $\lambda_p^{\epsilon} (\theta)$ in an interval outside the gap we will construct.
Recall that $F_p$ is defined in \eqref{def-Fp}.

\begin{proposition}[Perturbation of a simple eigenvalue] \label{simple-ev}
Fix $p \in \mathbb{N}$. There exist $\epsilon_{p,1}>0$, $d_{p,1}>0$ and $d_{p,2}\geq F_{2p}|\widehat{b}_{2p}|+F_{2p+1}|\widehat{b}_{2p+1}|+F_{2p+2}|\widehat{b}_{2p+2}|$ depending on $p$ and $b$, such that, for all $\epsilon \in [0,\epsilon_{p,1})$, and $d_{p,1}\epsilon \le \theta \le \frac{1}{2} -d_{p,1}\epsilon$, we have
\begin{equation}\label{eq:sigmasimple}
 \sigma(G_\theta[\epsilon b]) \cap \Big[\lambda_{2p}^{0} (0) , \lambda_{2p+2}^{0} (0)\Big] = \{\lambda_{2p}^{\epsilon} (\theta), \lambda_{2p+1}^{\epsilon} (\theta) \}
\end{equation}
where $\lambda_{2p}^{\epsilon} (\theta)$ and $\lambda_{2p+1}^{\epsilon} (\theta)$ are simple. Moreover,
\begin{align*}
 0&\leq \lambda_{2p}^{\epsilon} (\theta) \leq\lambda_{2p}^{0} (\tfrac12) - d_{p,2}\epsilon, \quad \text{if }p=0,\\
 \lambda_{2p}^{0} (0) + d_{p,2}\epsilon&\leq \lambda_{2p}^{\epsilon} (\theta) \leq\lambda_{2p}^{0} (\tfrac12) - d_{p,2}\epsilon, \quad \text{if }p>0,\\
 \lambda_{2p}^{0} (\tfrac12) + d_{p,2}\epsilon&\leq \lambda_{2p+1}^{\epsilon} (\theta) \leq\lambda_{2p+2}^{0} (0) - d_{p,2}\epsilon,
\end{align*}
and 
\[
\max_{k=2p,2p+1} \max_{d_{p,1}\epsilon \leq \theta \leq \frac{1}{2} -d_{p,1}\epsilon} \Big| \lambda_{k}^{\epsilon}(\theta) -\lambda_{k}^{0}(\theta) \Big| \leq d_{p,2} \epsilon.
\]
\end{proposition}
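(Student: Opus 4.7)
The plan is to apply Theorem~\ref{th-Le} in the setting already introduced above the statement, with $A = G_\theta[0]$, $D(A) = H^1(\T_{2\pi})$, $H = L^2(\T_{2\pi})$ and $\epsilon B_\epsilon = G_\theta[\epsilon b] - G_\theta[0]$; the analyticity of $\epsilon \mapsto B_\epsilon$ in $\cal{L}(H^1(\T_{2\pi}); L^2(\T_{2\pi}))$ and the uniform bound on $B_\epsilon(A+i)^{-1}$ are already provided by Proposition~\ref{proposition_analyticity_in_epsilon}.

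First, I would establish the spectral gap estimates for $G_\theta[0]$ that isolate the interval of interest. Using the explicit dispersion relation $\kappa_k(\theta) = (k+\theta)\tanh(k+\theta)$ from Section~\ref{sec:fond-plat} and Taylor expansions at the degeneracy points $\theta = 0$ and $\theta = 1/2$, one obtains a constant $c_p > 0$ depending only on $p$ such that, for all $\theta \in [0, 1/2]$,
\begin{equation*}
\lambda_{2p}^0(0) - \lambda_{2p-1}^0(\theta) \geq c_p \theta \;\; (p \geq 1), \qquad \lambda_{2p+2}^0(\theta) - \lambda_{2p+2}^0(0) \geq c_p \theta,
\end{equation*}
\begin{equation*}
\lambda_{2p}^0(\tfrac12) - \lambda_{2p}^0(\theta) \geq c_p(\tfrac12 - \theta), \qquad \lambda_{2p+1}^0(\theta) - \lambda_{2p}^0(\tfrac12) \geq c_p (\tfrac12 - \theta),
\end{equation*}
along with $\lambda_{2p}^0(\theta) - \lambda_{2p}^0(0) \geq c_p \theta$ for $p \geq 1$. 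Setting $m := \lambda_{2p}^0(\tfrac12)$ and restricting to $\theta \in [d_{p,1}\epsilon,\tfrac12 - d_{p,1}\epsilon]$, the three reals $\lambda_{2p}^0(0)$, $m$ and $\lambda_{2p+2}^0(0)$ then all lie at distance at least $c_p d_{p,1}\epsilon$ from $\sigma(G_\theta[0])$; for $p=0$ the left endpoint $\lambda_0^0(0)=0$ is replaced by $-\tfrac12$, whose distance to $\sigma(G_\theta[0])\subset[0,\infty)$ is at least $\tfrac12$. Moreover, $[\lambda_{2p}^0(0),m]$ and $[m,\lambda_{2p+2}^0(0)]$ each contain exactly one simple eigenvalue of $G_\theta[0]$, namely $\lambda_{2p}^0(\theta)$ and $\lambda_{2p+1}^0(\theta)$.

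The bound \eqref{est-BA}, combined with the uniform control on $\|B_\epsilon(A+i)^{-1}\|$, yields $\|B_\epsilon(A-a)^{-1}\| \leq C_p/(d_{p,1}\epsilon)$ for $a$ equal to any of these three endpoints, hence $\epsilon_A \geq c_p'' d_{p,1}\epsilon$. Choosing $d_{p,1}$ large enough (depending only on $p$ and $b$) secures $\epsilon < \epsilon_A$, and Theorem~\ref{th-Le} then places all three endpoints in $\rho(G_\theta[\epsilon b])$ and produces exactly one simple perturbed eigenvalue in each of $[\lambda_{2p}^0(0),m]$ and $[m,\lambda_{2p+2}^0(0)]$. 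These are identified as $\lambda_{2p}^\epsilon(\theta)$ and $\lambda_{2p+1}^\epsilon(\theta)$ by the ordering of the spectrum, establishing \eqref{eq:sigmasimple}.

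To extract the quantitative estimates, I appeal to the analytic perturbation of a simple eigenvalue separated from the remainder of the spectrum of $A$ by a gap of order $d_{p,1}\epsilon$ (which is valid within the radius $\epsilon_A$ already established, via the analyticity of the spectral projector obtained in the proof of Theorem~\ref{th-Le}): this gives $|\lambda_{2p}^\epsilon(\theta) - \lambda_{2p}^0(\theta)| \leq C_p\epsilon$ and the analogue for $\lambda_{2p+1}^\epsilon$, uniformly in $\theta$. Combined with the Taylor lower bounds, we deduce
\[
\lambda_{2p}^0(0) + (c_p d_{p,1} - C_p)\epsilon \leq \lambda_{2p}^\epsilon(\theta) \leq \lambda_{2p}^0(\tfrac12) - (c_p d_{p,1} - C_p)\epsilon,
\]
and similarly for $\lambda_{2p+1}^\epsilon(\theta)$. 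Enlarging $d_{p,1}$ so that $c_p d_{p,1} - C_p \geq d_{p,2}$, with $d_{p,2}$ chosen at least as large as the Fourier lower bound stated in the proposition (this lower bound is needed for later compatibility with Theorem~\ref{one_gap-order-eps}, where the gap opens on a scale $F_{2p}|\widehat b_{2p}|\epsilon$), completes the proof. The main obstacle is the delicate three-way balancing: because the relevant spectral gap of $A$ is only $\cal{O}(\epsilon)$, the parameter $\epsilon_A$ in Theorem~\ref{th-Le} is also $\cal{O}(\epsilon)$, so $d_{p,1}$ must be chosen large enough simultaneously to ensure $\epsilon < \epsilon_A$, to absorb the perturbative error $C_p\epsilon$, and to produce the sharp $d_{p,2}\epsilon$-margin from the degeneracy points.
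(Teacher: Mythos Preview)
Your approach is essentially the same as the paper's: both apply Theorem~\ref{th-Le} with endpoints $\lambda_{2p}^0(0)$, $\lambda_{2p}^0(\tfrac12)$, $\lambda_{2p+2}^0(0)$ (and a negative value in place of the left endpoint when $p=0$), using the Taylor expansion of $\kappa_k(\theta)$ at $\theta=0,\tfrac12$ to ensure these points sit at distance at least $c_p d_{p,1}\epsilon$ from $\sigma(G_\theta[0])$, and then balance $d_{p,1}$ against the constant in \eqref{est-BA} so that $\epsilon<\epsilon_A$.

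The one substantive difference is the quantitative step. The paper obtains $|\lambda_k^\epsilon(\theta)-\lambda_k^0(\theta)|\le d_{p,2}\epsilon$ by invoking a Hausdorff-distance estimate on spectra, \cite[Theorem~5.2]{L22}, which bounds $\sup_{k}\inf_j|\lambda_j^\epsilon-\lambda_k^0|$ directly in terms of $\|\epsilon B_\epsilon(A+1)^{-1}\|$ and is insensitive to the size of any gap. Your appeal to ``analytic perturbation of a simple eigenvalue'' is vaguer at this point: analyticity of the spectral projector on a disc of radius $\epsilon_A$ of order $d_{p,1}\epsilon$ does not by itself yield $|\lambda^\epsilon-\lambda^0|\le C_p\epsilon$ with $C_p$ independent of $\epsilon$, since Cauchy estimates on a disc shrinking like $\epsilon$ give no uniform control. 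The fix is easy---reapply Theorem~\ref{th-Le} with the tighter window $a_1,a_2=\lambda_{2p}^0(\theta)\pm C\epsilon$ for $C$ chosen large enough, or simply quote the same Hausdorff-type result---but as written that step is not quite complete. The paper also fixes $d_{p,2}$ first (from the Hausdorff bound) and only then chooses $d_{p,1}$ to guarantee the $2d_{p,2}\epsilon$ separation in \eqref{sys3.13}, which makes the order of dependencies cleaner than your iterated enlargement of $d_{p,1}$.
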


The condition $d_{p,2}\geq F_{2p}|\widehat{b}_{2p}|+F_{2p+1}|\widehat{b}_{2p+1}|+F_{2p+2}|\widehat{b}_{2p+2}|$ does not appear naturally in the proof, but it will be necessary for the opening of the gap proved in Section~\ref{section:4}.

\begin{proof}
Fix $p \in \mathbb{N}$. As it was noted before Proposition~\ref{simple-ev} that $B_\epsilon(A+i)^{-1}$ is uniformly bounded, we have also that $\|\epsilon B_\epsilon(A+1)^{-1} \| <1$ for all $\epsilon\in [0,\epsilon_0(b))$. We set
\begin{equation}\label{def-Reta}
R_p:=\lambda^0_{2p+2}(0)+1 , \quad \eta_\epsilon:= 1 -\|\epsilon B_\epsilon(A+1)^{-1} \| 
\end{equation}
which verifies $1\leq R_p<\eta_\epsilon/\|\epsilon B_\epsilon(A+1)^{-1} \| $ for $\epsilon$ small enough (depending only on $b$ and $p$). We choose $d_{p,2}$ larger than $F_{2p}|\widehat{b}_{2p}|+F_{2p+1}|\widehat{b}_{2p+1}|+F_{2p+2}|\widehat{b}_{2p+2}|$ such that
\[
\frac{R^2_p\|\epsilon B_\epsilon(A+1)^{-1} \|}{\eta_\epsilon -R_p\|\epsilon B_\epsilon(A+1)^{-1} \|} \leq d_{p,2}\epsilon, \quad \forall \epsilon\in [0,\epsilon_{p,1}], \ \theta \in [0,\frac{1}{2}]
\]
where $\epsilon_{p,1}>0$ is chosen small enough, depending only on $b$ and $p$.
As $\frac{\mathrm{d}\lambda_{k}^{0}}{\mathrm{d}\theta}(0)\neq 0$ if $k>0$ and $\frac{\mathrm{d}\lambda_{k}^{0}}{\mathrm{d}\theta}(\frac12) \neq 0$ if $k\geq 0$, we can fix $d_{p,1}>0$ such that\footnote{For instance, for $k>0$, $\frac{\mathrm{d}\lambda_{2k}^{0}}{\mathrm{d}\theta}(0),\frac{\mathrm{d}\lambda_{2k}^{0}}{\mathrm{d}\theta}(\frac12) \neq 0$ implies that $\lambda_{2k}^{0}(0)+c_k \theta \leq\lambda_{2k}^{0}(\theta) \leq \lambda_{2k}^{0}(\tfrac12) - c_k (\tfrac12-\theta)$ for all $\theta\in [0,1/2]$.}
\begin{equation}\label{sys3.13}
 \begin{aligned}
 \lambda_{0}^{0} (\theta) < \lambda_{0}^{0} (\tfrac12) -2d_{p,2}\epsilon , \quad &\text{for } \theta\in [0,\tfrac12-d_{p,1}\epsilon]\\
 \lambda_{2k}^{0} (0) + 2d_{p,2}\epsilon < \lambda_{2k}^{0} (\theta) < \lambda_{2k}^{0} (\tfrac12) -2d_{p,2}\epsilon , \quad &\text{for }0< k \leq p \\
 &\text{and }\theta\in [d_{p,1}\epsilon,\tfrac12-d_{p,1}\epsilon],\\
 \lambda_{2k}^{0} (\tfrac12) + 2d_{p,2}\epsilon < \lambda_{2k+1}^{0} (\theta) < \lambda_{2k+2}^{0} (0) -2d_{p,2}\epsilon , \quad &\text{for }0\leq k \leq p \\
 &\text{and }\theta\in [d_{p,1}\epsilon,\tfrac12-d_{p,1}\epsilon],\\
\end{aligned}\end{equation}

We assume now that $p>0$ and we will comment later the case $p=0$. 

Let $a_1 = \lambda_{2p}^{0} (0)$ and $a_2 = \lambda_{2p}^{0}(\tfrac{1}{2})$ which clearly belong to the resolvent set of $A$ when $\theta\in (0,1/2)$. For $\theta \in [d_{p,1}\epsilon,\tfrac12-d_{p,1}\epsilon]$, the previous inequalities imply that 
\begin{equation*}
\mathrm{d}(a_1, \sigma(G_\theta[0])) \geq 2d_{p,2}\epsilon, \quad \mathrm{d}(a_2, \sigma(G_\theta[0])) \geq 2d_{p,2}\epsilon,
\end{equation*}
hence by \eqref{est-BA}
\begin{equation*}
 \| B_\epsilon (G_\theta[0]-a_1)^{-1}\| \le \frac{C_{b,p} }{2d_{p,2}\epsilon} , \quad 
\| B_\epsilon (G_\theta[0]-a_2)^{-1}\| \le \frac{C_{b,p}}{2d_{p,2}\epsilon} .
\end{equation*} 
 As for Theorem~\ref{th-Le}, we set 
 \begin{equation*}
 \epsilon_A = \inf\Big( \frac{1}{2\|B_\epsilon(A-a_1)^{-1} \|}; \frac{1}{2\|B_\epsilon(A-a_2)^{-1}\|}; \epsilon_0 \Big)
 \geq \min\Big( \frac{d_{p,2}\epsilon}{C_{b,p}}; \epsilon_0 \Big) .
\end{equation*}
Without any loss of generality, we can assume that $d_{p,2}$ was chosen large enough such that $d_{p,2}\geq C_{b,p}$. Therefore, for any $\epsilon \in [0,\epsilon_{p,1})$ and $d_{p,1}\epsilon \le \theta \le \frac12-d_{p,1}\epsilon$, we have $0\leq \epsilon < \epsilon_A $, so Theorem~\ref{th-Le} implies that there exists a unique eigenvalue inside $[\lambda_{2p}^{0} (0), \lambda_{2p}^{0} (\frac12)]$. Moreover, it is simple and strictly included in this interval.
 Applying the same argument with $a_1=\lambda_{2p}^{0} (\frac12)$ and $a_2= \lambda_{2p+2}^{0} (0)$, there exists a unique eigenvalue inside $[\lambda_{2p}^{0} (0), \lambda_{2p}^{0} (\frac12)]$. Moreover, it is simple and strictly included in this interval.

For $p=0$, we simply consider $a_1=-1$ and $a_2=\lambda_{2p}^{0} (\frac12)$, which means that for $\theta \in [0,\frac12-d_{p,1}\epsilon]$, there is a unique eigenvalue inside $[-1, \lambda_{0}^{0} (\frac12)]$. Moreover, it is simple, strictly included in this interval and is non negative by the positivity of $G_\theta[\epsilon b]$. Therefore, choosing $p\mapsto \epsilon_{p,1}$ decreasing, we can count the eigenvalues and conclude the proof of \eqref{eq:sigmasimple}.

Next, we apply \cite[Theorem 5.2]{L22} with $a=-1$, $R_p$ and $\eta_\epsilon$ defined in \eqref{def-Reta}, to state that 
\begin{align*}
\sup_{k \leq 2p+2} \inf_{j \in \mathbb{N}} |\lambda_j^\epsilon(\theta) - \lambda_k^0(\theta)| &\leq 
\frac{R_p^2\|\epsilon B_\epsilon(A+1)^{-1} \|}{\eta_\epsilon -R_p\|\epsilon B_\epsilon(A+1)^{-1} \|} \leq d_{p,2}\epsilon , \\ 
\sup_{j \leq 2p+2} \inf_{k \in \mathbb{N}} |\lambda_j^\epsilon(\theta) - \lambda_k^0(\theta)| &\leq \frac{R_p^2\|\epsilon B_\epsilon(A+1)^{-1} \|}{\eta_\epsilon -R_p\|\epsilon B_\epsilon(A+1)^{-1} \|} \leq d_{p,2}\epsilon .
\end{align*}
Using \eqref{sys3.13}, we conclude that the infimum is reached for $j=k$, which ends the proof of the proposition.
\end{proof}

\begin{remark}\label{rem-simple0}
 If $p=0$, we note in the previous proof that we have the information for $\lambda^\epsilon_0(\theta)$ up to $\theta =0$, namely for all $\epsilon\in [0,\epsilon_{p,1})$ and $\theta\in [0,\frac12-d_{p,1}\epsilon]$, one has
 \[
 \sigma(G_\theta[\epsilon b]) \cap \Big[0, \lambda^0_{0}(\tfrac12)\Big] = \{\lambda_0^\epsilon(\theta) \},
 \]
 where $\lambda_0^\epsilon(\theta)$ is simple, belongs to $[0,\lambda^0_{0}(\tfrac12)-d_{p,2}\epsilon]$ and is such that 
 \[\max_{0 \leq \theta \leq \frac{1}{2} -d_{p,1}\epsilon} \Big| \lambda_{0}^{\epsilon}(\theta) -\lambda_{0}^{0}(\theta) \Big| \leq d_{p,2} \epsilon.\]
\end{remark}

The next proposition provides a first description of the spectrum of $G_\theta[\epsilon b]$ for $\epsilon $ small, and $\theta$ close to $0$ or $\frac{1}{2}$, where $G_\theta [0]$ has a eigenvalue of multiplicity two. The next section will give conditions on the bottom $b$ that lead to the separation of the double eigenvalue into two simple eigenvalues, creating a gap.

\begin{proposition}[Perturbation of a double eigenvalue] \label{double-ev0}
Fix $p \in \mathbb{N}^\ast$. There exist $\epsilon_{p,2}\in (0,\epsilon_{p,1}]$ and $d_{p,3}>0$ depending on $p$ and $b$ such that, for all $\epsilon \in [0,\epsilon_{p,2}]$, and $0 \le \theta \le d_{p,1}\epsilon$, we have that
\begin{equation*}
 \sigma(G_\theta[\epsilon b]) \cap \Big[\lambda_{2p-1}^{0} (\tfrac12) , \lambda_{2p}^{0} (\tfrac12)\Big] 
\end{equation*}
contains exactly two eigenvalues counted with multiplicity $ \lambda_{2p-1}^{\epsilon}(\theta) \leq \lambda_{2p}^{\epsilon}(\theta)$.
 Moreover,
\[
\max_{k=2p-1,2p} \max_{0 \leq \theta \leq d_{p,1}\epsilon} \Big| \lambda_{k}^{\epsilon}(\theta) -\lambda_{k}^{0}(\theta) \Big| \leq d_{p,3} \epsilon.
\]
\end{proposition}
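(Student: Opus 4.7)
The approach is parallel to the proof of Proposition~\ref{simple-ev}, relying on Theorem~\ref{th-Le} to preserve the total eigenvalue count in a spectral window under a small perturbation. The essential new difficulty is that at $\theta=0$ the eigenvalues $\lambda_{2p-1}^{0}(0) = \lambda_{2p}^{0}(0) = p\tanh(p)$ coincide, so one cannot insert an endpoint of the resolvent set between them; we must instead enclose the pair by endpoints lying outside on both sides, which is precisely the role of the wider window $[\lambda^{0}_{2p-1}(\tfrac{1}{2}),\lambda^{0}_{2p}(\tfrac{1}{2})]$ appearing in the statement.

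First, I would take $a_{1} := \lambda^{0}_{2p-1}(\tfrac{1}{2}) = (p-\tfrac{1}{2})\tanh(p-\tfrac{1}{2})$ and $a_{2} := \lambda^{0}_{2p}(\tfrac{1}{2}) = (p+\tfrac{1}{2})\tanh(p+\tfrac{1}{2})$. By strict monotonicity of $s\mapsto s\tanh s$ on $\R_{+}$, at $\theta=0$ we have
\[
\lambda^{0}_{2p-2}(0) < a_{1} < \lambda^{0}_{2p-1}(0) = \lambda^{0}_{2p}(0) < a_{2} < \lambda^{0}_{2p+1}(0).
\]
By continuity of $\theta\mapsto \lambda^{0}_{k}(\theta)$, after restricting $\epsilon_{p,2}$ to be sufficiently small (depending only on $b$ and $p$), these strict inequalities persist uniformly for $\theta\in[0,d_{p,1}\epsilon]$, providing a uniform lower bound $\mathrm{d}(a_{j},\sigma(G_{\theta}[0]))\geq c_{p}>0$ for $j=1,2$, and ensuring $\sigma(G_\theta[0])\cap[a_1,a_2]=\{\lambda_{2p-1}^0(\theta),\lambda_{2p}^0(\theta)\}$ counted with multiplicity $2$.

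Second, I would apply Theorem~\ref{th-Le} to $A=G_{\theta}[0]$ with perturbation $\epsilon B_{\epsilon} = G_{\theta}[\epsilon b] - G_{\theta}[0]$, whose analyticity in $\epsilon$ is provided by Proposition~\ref{proposition_analyticity_in_epsilon}. Combining the uniform boundedness of $B_{\epsilon}(A+i)^{-1}$ with \eqref{est-BA} and the uniform lower bound on $\mathrm{d}(a_{j},\sigma(G_{\theta}[0]))$ yields $\epsilon_{A}\geq c'_{p}>0$, independently of $\theta\in[0,d_{p,1}\epsilon]$. Shrinking if necessary so that $\epsilon_{p,2}\in(0,\min(\epsilon_{p,1},c'_{p}))$, Theorem~\ref{th-Le} gives $a_{1},a_{2}\in\rho(G_{\theta}[\epsilon b])$, and $[a_{1},a_{2}]$ contains exactly two eigenvalues of $G_{\theta}[\epsilon b]$ counted with multiplicity, which we order as $\lambda_{2p-1}^{\epsilon}(\theta)\leq\lambda_{2p}^{\epsilon}(\theta)$.

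Finally, the quantitative bound is obtained by invoking \cite[Theorem~5.2]{L22} as in the proof of Proposition~\ref{simple-ev}, with $a=-1$ and $R_{p}:=\lambda_{2p+1}^{0}(0)+1$, which delivers $\sup_{k\leq 2p}\inf_{j\in\N}|\lambda_{j}^{\epsilon}(\theta)-\lambda_{k}^{0}(\theta)|\leq d_{p,3}\epsilon$. Because the only eigenvalues of either operator lying in $[a_{1},a_{2}]$ are the indexed pairs, while all other eigenvalues remain at distance $\geq c_{p}/2$ from this interval, the infimum for $k=2p-1$ and $k=2p$ is realized at $j=2p-1$ and $j=2p$ respectively, yielding the desired bound. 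The main obstacle is maintaining the uniform lower bound on $\mathrm{d}(a_{j},\sigma(G_{\theta}[0]))$ as $\theta$ varies in $[0,d_{p,1}\epsilon]$ and coupling the size of $\epsilon_{p,2}$ with $d_{p,1}$ and the analyticity constants from Proposition~\ref{proposition_analyticity_in_epsilon}.
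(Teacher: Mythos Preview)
Your proposal is correct and follows essentially the same approach as the paper: same choice of window endpoints $a_1=\lambda^0_{2p-1}(\tfrac12)$, $a_2=\lambda^0_{2p}(\tfrac12)$, same use of Theorem~\ref{th-Le} via a uniform lower bound on $\mathrm{d}(a_j,\sigma(G_\theta[0]))$, and the same appeal to \cite[Theorem~5.2]{L22} for the quantitative estimate. One small imprecision: in your last paragraph, the claim that the infimum for $k=2p-1$ is realized at $j=2p-1$ (and likewise for $2p$) need not hold literally, since $\lambda^0_{2p-1}(\theta)$ and $\lambda^0_{2p}(\theta)$ are within $O(\epsilon)$ of each other and the nearest perturbed eigenvalue could be either of the pair; the paper closes this by observing $|\lambda^0_{2p}(\theta)-\lambda^0_{2p-1}(\theta)|\le C_p d_{p,1}\epsilon$ on the range $\theta\in[0,d_{p,1}\epsilon]$, which you should add to make the bound for the individual indices rigorous (absorbing the extra $C_p d_{p,1}$ into $d_{p,3}$).
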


\begin{proof}
The proof follows the strategy of the proof of Proposition~\ref{simple-ev} but is much simpler as $\lambda^0_{2p}(\theta)$ is far from $\lambda^0_{2p}(\frac12)$ for $\theta \in [0, d_{p,1}\epsilon]$.
Fix $p \in \mathbb{N}^*$, then
\begin{equation*}
 \lambda_{2p-1}^{0} (\tfrac12) + \tfrac18 < \lambda_{2p-1}^{0} (\theta) \leq \lambda_{2p}^{0} (\theta) < \lambda_{2p}^{0} (\tfrac12) -\tfrac18 , \quad \text{for } \theta\in [0,d_{p,1}\epsilon]\\
\end{equation*}
for all $\epsilon$ small enough, depending on $p$ and $d_{p,1}$.

Let $a_1 = \lambda_{2p-1}^{0} (\tfrac12)$ and $a_2 = \lambda_{2p}^{0}(\tfrac{1}{2})$ which clearly belong to the resolvent set of $A$ when $\theta\in [0,d_{p,1}\epsilon ]$. As
$\mathrm{d}(a_1, \sigma(G_\theta[0])),\mathrm{d}(a_2, \sigma(G_\theta[0])) \geq 1/4$, \eqref{est-BA} gives
\begin{equation*}
 \| B_\epsilon (G_\theta[0]-a_1)^{-1}\| \le C_{b,p} , \quad 
\| B_\epsilon (G_\theta[0]-a_2)^{-1}\| \le C_{b,p} .
\end{equation*} 
 As for Theorem~\ref{th-Le}, we set 
 \begin{equation*}
 \epsilon_A = \inf\Big( \frac{1}{2\|B_\epsilon(A-a_1)^{-1} \|}; \frac{1}{2\|B_\epsilon(A-a_2)^{-1}\|}; \epsilon_0 \Big)
 \geq \min\Big( \frac1{2C_{b,p}}; \epsilon_0 \Big) .
\end{equation*}
Therefore, for any $\epsilon \in [0,\epsilon_{p,2})$, where $\epsilon_{p,2}$ is small enough, and $0 \le \theta \le d_{p,1}\epsilon$, we have $0\leq \epsilon < \epsilon_A $, so Theorem~\ref{th-Le} implies that there exist two eigenvalues counted with multiplicity inside $[\lambda_{2p-1}^{0} (\tfrac12), \lambda_{2p}^{0} (\frac12)]$. Moreover, they are strictly included in this interval.

Choosing $p\mapsto \epsilon_{p,2}$ decreasing, we can count the eigenvalues and conclude that they correspond to $\lambda_{2p-1}^\epsilon(\theta)$ and $\lambda_{2p}^\epsilon(\theta)$.

We next use again that for $\epsilon \in [0,\epsilon_{p,1}]$, we have $\|\epsilon B_\epsilon(A+1)^{-1} \| <1$ and $1\leq R_p<\eta_\epsilon/\|\epsilon B_\epsilon(A+1)^{-1} \| $ where
\[
R_p =\lambda^0_{2p+2}(0)+1 , \quad \eta_\epsilon:= 1 -\|\epsilon B_\epsilon(A+1)^{-1} \| 
\]
So \cite[Theorem 5.2]{L22} with $a=-1$ gives that 
\begin{align*}
\sup_{k \leq 2p+2} \inf_{j \in \mathbb{N}} |\lambda_j^\epsilon(\theta) - \lambda_k^0(\theta)| &\leq 
\frac{R_p^2\|\epsilon B_\epsilon(A+1)^{-1} \|}{\eta_\epsilon -R_p\|\epsilon B_\epsilon(A+1)^{-1} \|} \leq d_{p,2}\epsilon \\ 
\sup_{j \leq 2p+2} \inf_{k \in \mathbb{N}} |\lambda_j^\epsilon(\theta) - \lambda_k^0(\theta)| &\leq \frac{R_p^2\|\epsilon B_\epsilon(A+1)^{-1} \|}{\eta_\epsilon -R_p\|\epsilon B_\epsilon(A+1)^{-1} \|} \leq d_{p,2}\epsilon .
\end{align*}
The first inequalities established in this proof end the proof of the proposition because $|\lambda_{2p}^0(\theta) - \lambda_{2p-1}^0(\theta)|\leq C_p d_{p,1}\epsilon$ for all $\theta\in [0,d_{p,1}\epsilon]$.
\end{proof}

The previous proof can be directly adapted in the neighborhood of $1/2$.
\begin{proposition}[Perturbation of a double eigenvalue] \label{double-ev1-2}
Fix $p \in \mathbb{N}$. There exist $\epsilon_{p,3}\in (0,\epsilon_{p,1}]$ and $d_{p,4}>0$ depending on $p$ and $b$ such that, for all $\epsilon \in [0,\epsilon_{p,3}]$, and $\frac12 -d_{p,1}\epsilon \le \theta \le \frac12$, we have that
\begin{equation*}
 \sigma(G_\theta[\epsilon b]) \cap \Big[\lambda_{2p}^{0} (0) , \lambda_{2p+2}^{0} (0)\Big] 
\end{equation*}
contains exactly two eigenvalues counted with multiplicity $ \lambda_{2p}^{\epsilon}(\theta) \leq \lambda_{2p+1}^{\epsilon}(\theta)$.
 Moreover,
\[
\max_{k=2p,2p+1} \max_{\frac12-d_{p,1}\epsilon \leq \theta \leq \frac12 } \Big| \lambda_{k}^{\epsilon}(\theta) -\lambda_{k}^{0}(\theta) \Big| \leq d_{p,4} \epsilon.
\]
\end{proposition}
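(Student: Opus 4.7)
The plan is to mimic verbatim the argument of Proposition~\ref{double-ev0}, simply shifting the confining interval so that it reflects the fact that the unperturbed double eigenvalue now sits at $\theta=\tfrac12$ instead of $\theta=0$. The natural endpoints are $a_1:=\lambda_{2p}^{0}(0)$ and $a_2:=\lambda_{2p+2}^{0}(0)$, which play here the role played by $\lambda_{2p-1}^{0}(\tfrac12)$ and $\lambda_{2p}^{0}(\tfrac12)$ in the previous proposition.

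First I would use the continuity of $\theta\mapsto \lambda_k^{0}(\theta)$ together with the strict inequalities $\lambda_{2p}^{0}(0) < \lambda_{2p}^{0}(\tfrac12) = \lambda_{2p+1}^{0}(\tfrac12) < \lambda_{2p+2}^{0}(0)$ to pick $\epsilon_{p,3}\in(0,\epsilon_{p,1}]$ small enough that, for every $\epsilon\in[0,\epsilon_{p,3}]$ and every $\theta\in[\tfrac12-d_{p,1}\epsilon,\tfrac12]$,
\[
\lambda_{2p}^{0}(0) + \tfrac18 < \lambda_{2p}^{0}(\theta) \leq \lambda_{2p+1}^{0}(\theta) < \lambda_{2p+2}^{0}(0) - \tfrac18.
\]
This buffer gives $\mathrm{d}(a_j,\sigma(G_\theta[0]))\geq \tfrac14$ uniformly in the window. (Note that this includes $p=0$ since, for such $\theta$, $\lambda_0^{0}(\theta)$ remains close to $\tfrac12\tanh\tfrac12>0$, so $a_1=0$ is in the resolvent set of $G_\theta[0]$.) Combining this with the estimate \eqref{est-BA} and the uniform bound on $B_\epsilon(A+i)^{-1}$ recorded after Theorem~\ref{th-Le} yields
\[
\|B_\epsilon(G_\theta[0]-a_j)^{-1}\| \leq C_{b,p},\qquad j=1,2,
\]
so that the threshold $\epsilon_A$ of Theorem~\ref{th-Le} is bounded below by $\min(1/(2C_{b,p}),\epsilon_0)$, independently of $\theta$. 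Shrinking $\epsilon_{p,3}$ if needed so that $\epsilon_{p,3}<\epsilon_A$, Theorem~\ref{th-Le}(b) then guarantees that $\sigma(G_\theta[\epsilon b])\cap[a_1,a_2]$ contains exactly two eigenvalues counted with multiplicity, since at $\epsilon=0$ the spectrum inside $(a_1,a_2)$ is precisely $\{\lambda_{2p}^{0}(\theta),\lambda_{2p+1}^{0}(\theta)\}$ of total multiplicity two. Taking $p\mapsto\epsilon_{p,3}$ decreasing, the global monotone labeling of the bands forces these two perturbed eigenvalues to be $\lambda_{2p}^\epsilon(\theta)\leq \lambda_{2p+1}^\epsilon(\theta)$.

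Finally, the quantitative $\mathcal{O}(\epsilon)$ bound would be obtained as in the last paragraph of the proof of Proposition~\ref{double-ev0}: applying \cite[Theorem~5.2]{L22} with $a=-1$ and $R_p=\lambda_{2p+2}^{0}(0)+1$ yields
\[
\sup_{k\leq 2p+2}\inf_{j\in\mathbb{N}}|\lambda_j^\epsilon(\theta)-\lambda_k^{0}(\theta)| \leq d_{p,4}\epsilon
\]
and the symmetric estimate; since $|\lambda_{2p+1}^{0}(\theta)-\lambda_{2p}^{0}(\theta)|\leq C_p d_{p,1}\epsilon$ on the narrow window $[\tfrac12-d_{p,1}\epsilon,\tfrac12]$ and since the adjacent bands $\lambda_{2p-1}^{0}$ and $\lambda_{2p+2}^{0}$ remain strictly outside $[a_1,a_2]$ (by the continuity argument of the first step), the two infima above are both attained at $j=k$ for $k\in\{2p,2p+1\}$, which delivers the constant $d_{p,4}$. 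There is essentially no new obstacle: the argument is a translation of Proposition~\ref{double-ev0} from a neighborhood of $\theta=0$ to one of $\theta=\tfrac12$, and the only mild bookkeeping point is verifying that $a_1=\lambda_{2p}^0(0)$ stays outside $\sigma(G_\theta[0])$ for $\theta$ in the shrinking window (in particular when $p=0$), which we have just checked.
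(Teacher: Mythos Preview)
Your proposal is correct and follows exactly the approach the paper indicates: the paper itself does not write out a separate proof for Proposition~\ref{double-ev1-2} but simply states that ``the previous proof can be directly adapted in the neighborhood of $1/2$,'' and your argument is precisely that adaptation, with the interval $[\lambda_{2p}^0(0),\lambda_{2p+2}^0(0)]$ replacing $[\lambda_{2p-1}^0(\tfrac12),\lambda_{2p}^0(\tfrac12)]$ and the same use of Theorem~\ref{th-Le} and \cite[Theorem~5.2]{L22}. The only extra care you take, checking that $a_1=\lambda_{2p}^0(0)$ (in particular $a_1=0$ when $p=0$) lies in the resolvent set of $G_\theta[0]$ for $\theta$ near $\tfrac12$, is a natural bookkeeping point left implicit in the paper.
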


\subsection{Proof of Theorem~\ref{union-of-bands}}

 To see more clearly the role of parameters $\theta$ and $\epsilon$, we use in this section the notation $\lambda_p(\theta,\epsilon)$ for $\lambda_p^\epsilon(\theta)$, and $\tau_p(\theta,\epsilon)$ for $\tau_p^\epsilon(\theta)$.

Theorem~\ref{union-of-bands} will follow from general perturbation theory for analytic operators. 
We recall that the compactness and self-adjointness of the resolvent $(1+G_\theta[\epsilon b])^{-1}$ provide eigenvalues $(\lambda_n(\theta,\epsilon))_n $ for $G_\theta[\epsilon b]$ (see Remark~\ref{rem-eigen-resolvent}) and the associated eigenfunctions $(\psi_n(\theta,\epsilon))_n$ form a complete orthonormal basis of $L^2(\T_{2\pi})$.

For any fixed $\epsilon$, the analyticity of the resolvent with respect to $\theta$ (Proposition~\ref{proposition_analyticity_in_theta}) allows us to apply in \cite[Chapter II, Theorem~1]{Rellich} or \cite[Chapter~VII, Theorem~3.9]{Kato} to state that there is a reordering $(\tilde\lambda_n,\tilde\psi_n)$ of $(\lambda_n,\psi_n)$ such that the functions $\theta \mapsto \tilde\lambda_n(\theta, \epsilon)$ and $\theta \mapsto \tilde\psi_n(\theta, \epsilon)$ are analytic in a neighborhood of $[-\frac12,\frac12]$ (a necessary reordering near crossing eigenvalues, see Figure~\ref{fig}).

As illustrated in Figure~\ref{fig}, having analytic eigenvalues with respect to $\theta$ in the neighborhood of a crossing point means that $(\tilde\lambda_{n})_{n}$ is not necessary an increasing sequence for all $\theta$. Alternatively, we can redefine the functions $\theta \mapsto (\lambda_n,\psi_{n})(\theta, \epsilon)$ so that the eigenvalues are in increasing order, but, in this case, we can only say that the eigenvalues are Lipschitz with respect to $\theta \in (-\frac12,\frac12]$. This is the choice made in \cite[Theorem~7.3]{L22}.

We now prove the relation given in Theorem~\ref{union-of-bands} between the spectrum of $G[\varepsilon b]$ and $\lambda_{n}$, using general argument of the Bloch-Floquet theory. A way to verify this statement is to extend the proof of such an equality for the Schr\"odinger operator \cite[Theorem~7.3]{L22}. 
We start with the inclusion from right to left.
Define $f_{\theta}:=\psi_{n}(\theta, \varepsilon)\in H^1(\T_{2\pi})$ the eigenfunction associated to the eigenvalue $\lambda_{n}(\theta,\varepsilon)$ of $G_{\theta}[\varepsilon b]$ and set
\[
g_{\eta}(x):=\eta^{1/2} e^{i\theta x} f_{\theta}(x) \chi (\eta x)\in H^1(\R), 
\]
where $\chi \in C^\infty_{c}(\R,\R_+)$ and $\int \chi^2=1$. It is proved therein that $\|g_{\eta}\|_{L^2}\to (2\pi)^{-1/2}$ as $\eta\to 0$. The only point to adapt is the fact that $(G[\varepsilon b]-\lambda_{n})g_{\eta}$ tends to zero in the limit $\eta\to 0$. For this, let us consider $\Psi_{\eta}(x,z)= \eta^{1/2} e^{i\theta x} \Phi_{\theta}(x,z) \chi (\eta x)$ where $\Phi_{\theta}$ is the solution of the elliptic problem \eqref{D2N-theta} associated to $G_{\theta}[\varepsilon b]\psi_{n}$
and verify that it satisfies in the sense of distributions
\begin{equation*}
 \begin{cases}
 -\Delta \Psi_{\eta} = - \eta^{5/2} e^{i\theta x} \Phi_{\theta} \chi'' (\eta x) - 2\eta^{3/2} e^{i\theta x}(i\theta \Phi_{\theta}+ \partial_{x} \Phi_{\theta}) \chi' (\eta x) \text{ in }\Omega_\epsilon, \\
 \Psi_{\eta}\vert_{z=0} = g_{\eta}, \quad \partial_n \Psi_{\eta} \vert_{z=-1+\epsilon b} = \eta^{3/2} n_{x} e^{i\theta x} \Phi_{\theta} \chi' (\eta x) , \quad \partial_n \Psi_{\eta} \vert_{z=0} = \lambda_{n}g_{\eta}
\end{cases}
 \end{equation*}
whereas the solution $\Phi$ of the elliptic problem \eqref{D2N-theta} associated to $G[\varepsilon b] g_{\eta}$ satisfies
\[
 \begin{cases}
 -\Delta \Phi = 0 \quad \text{in }\Omega_\epsilon, \\
 \Phi_{\vert_{z=0}} = g_{\eta}, \quad \partial_n \Phi _{\vert_{z=-1+\epsilon b}} = 0.
\end{cases}
\]
This implies that
\begin{align*}
\|(G[\varepsilon b]&-\lambda_{n})g_{\eta} \|_{L^2(\R)}= \| (\partial_{n}\Phi - \partial_{n}\Psi_{\eta} ) \vert_{z=0} \|_{L^2(\R)} \leq C \| \Phi -\Psi_{\eta} \|_{H^{3/2}(\Omega_{\varepsilon})}\\
 \leq &C \| - \eta^{5/2} e^{i\theta x} \Phi_{\theta} \chi'' (\eta x) - 2\eta^{3/2} e^{i\theta x}(i\theta \Phi_{\theta}+ \partial_{x} \Phi_{\theta}) \chi' (\eta x) \|_{H^{1/2}(\Omega_{\varepsilon})} \\
 &+C \| \eta^{3/2} n_{x} e^{i\theta x} \Phi_{\theta} \chi' (\eta x) \|_{L^2(\partial \Omega_{\varepsilon})}.
\end{align*}
The right hand side term tends to zero as in \cite[Theorem~7.3]{L22}, which implies, from the Weyl's criterion (see \cite[Theorem~VII.12]{RS80}), that $\lambda_n\in \sigma (G[\varepsilon b])$.

For the converse, we notice that Theorem~\ref{theo:Floquet-decompop} together with the isometry of $\cal U$ gives
\[
\| G[\varepsilon b] f \|_{L^2(\R)}^2 = \int_{0}^1 \int_{0}^{2\pi} \Big|G_{\theta}[\varepsilon b] \cal U f(\cdot, \theta)\Big|^2(x) \dd x \dd \theta
 \]
so the rest of the proof of \cite[Theorem~7.3]{L22} can be readily applied. 

Notice that $G_0[\epsilon b] 1 =0$ so $\lambda_0^\epsilon(0)=0$. 

Part~(ii) of Theorem~\ref{union-of-bands} is a direct consequence of Proposition~\ref{simple-ev}, Remark~\ref{rem-simple0} and Propositions~\ref{double-ev0} and \ref{double-ev1-2}, by setting $\epsilon_1(b,p) =\min (\epsilon_{1,p},\epsilon_{2,p},\epsilon_{3,p})$.

We now prove (iii). We first fix $N_1$ such that for all $\theta \in [-\frac{1}{2},\frac{1}{2}]$, $\lambda_{N_1+1}^0(\theta) \geq M+2$. By Part~(ii) of the theorem, there exists $\epsilon_1(b,N_1)$ such that for all $\epsilon \leq \epsilon_1(b,N_1)$ and for all $\theta \in [-\frac{1}{2},\frac{1}{2}]$, $\lambda_{N_1+1}^\epsilon(\theta) \geq M+1$. Therefore for all $n \geq N_1+1$,
\begin{equation}\label{lambda_n_larger_M}
\lambda_n^\epsilon(\theta) \geq M+1.
\end{equation}
Furthermore, there also exists $N$ such that for all $n \geq N+1$, 
\begin{equation}\label{lambda_tilde_n_larger_M}
\tilde \lambda_n^\epsilon(\theta) \geq M+\tfrac12.
\end{equation}
We need the above estimate on $\tilde \lambda_n^\epsilon(\theta)$ in order to reproduce the proof Theorem XIII.86 of \cite{RS78} and obtain the purely absolutely continuous character of the lower part of the spectrum of $\sigma(G[\epsilon b])$.
To prove \eqref{lambda_tilde_n_larger_M}, we proceed by contradiction.
Assume that for all $N$, there exist $n \geq N+1$ and $\theta_n \in [-\frac12,\frac12]$ such that $\tilde \lambda_{n}^\epsilon(\theta_n) < M+\tfrac12$, i.e. there is an infinite number of $\tilde \lambda_{n}^\epsilon$ passing from $\tilde \lambda_{n}^\epsilon (0)=\lambda_{n}^\epsilon (0)>M+1$, to values less that $M+ \tfrac12$. There is no subsequence such that $(\theta_n)$ is constant (say equal to $\theta_0)$, because this would imply that there is an infinite number of eigenvalues at $\theta_0$ less than $M+1$. Therefore, we can extract a subsequence $(\theta_n)$ which is increasing or decreasing, say increasing, where $\tilde \lambda_n(\theta_n)< M+\frac12$. Denote $\theta_\infty \in [-\frac12,\frac12]$ the limit of $(\theta_n)$. Since there is only a finite number of eigenvalues $(\tilde \lambda_n^\epsilon(\theta_\infty))$ less than $M+1$, an infinite number of $\tilde \lambda_n^\epsilon$ has to exit $[0,M+\frac34]$, i.e. there are $c_n\in [\theta_n,\theta_\infty)$ such that $M+1 \geq \tilde \lambda_n^\epsilon(c_n)\geq M+\frac34$. As $\tilde \lambda_n^\epsilon$ can be defined only from $N_1+1$ functions $\lambda_{n}^\epsilon$, there is an index $\varphi(n)\in [0,N_1]$ and some values $a_n, b_n$ of the parameter $\theta$ such that, $\theta_n\leq a_n\leq b_n\leq c_n$ and $|\lambda_{\varphi(n)}^\epsilon(b_n) -\lambda_{\varphi(n)}^\epsilon(a_n)|\geq \frac1{4(N_1+1)}$. As $\varphi(n)\in [0,N_1]$, we can extract a sequence such that $\varphi(n)$ is constant (say $N_\infty$), which is impossible because $\lambda_{N_\infty}^\epsilon$ is Lipschitz.
Therefore \eqref{lambda_tilde_n_larger_M} holds.

We define 
\begin{equation}
 H_n := \{h \in L^2([-\tfrac{1}{2},\tfrac{1}{2}],\mathbb{T}) \; ; \; h(\theta) = f(\theta)\tilde\psi_n^\epsilon(\theta, \epsilon), f \in L^2([-\tfrac{1}{2},\tfrac{1}{2}]) \}
\end{equation}
and $G_n : H_n \longrightarrow H_n$ as the multiplication by the function $\tilde\lambda_n(\cdot, \epsilon)$.

We use Propositions~\ref{simple-ev}, \ref{double-ev0} and \ref{double-ev1-2} to state that there exists $\epsilon_2(N,b)$ such that for all $n \leq N$ and for all $\epsilon \leq \epsilon_2(N,b)$ and $\theta \in [-\frac{1}{2},\frac{1}{2}]$, $|\tilde\lambda_n^\epsilon(\theta) - \tilde\lambda_n^0(\theta)| \leq C_{N,b}\epsilon$. Since $\tilde \lambda_n^0(\theta) = (r+\theta)\tanh(r+\theta)$ for some $r \in \mathbb{N}$, we get that for $\epsilon$ small enough and for all $n \leq N$, $\tilde\lambda_n^\epsilon(\cdot)$ is not constant. The proof of \cite[Theorem XIII.86]{RS78} then ensures that $G_n$ has only absolutely continuous spectrum. Note also that $\sigma(G_n) = \tilde \lambda_n^\epsilon([-\frac{1}{2},\frac{1}{2}])$ so by Part~(i) of the theorem and \eqref{lambda_tilde_n_larger_M} we have
\begin{equation*}
 \sigma(G[\epsilon b])\cap[0,M] = \bigcup_{n=0}^N\sigma(G_n)\cap[0,M]
\end{equation*}
and therefore (iii).

\section{Gap opening of order $\epsilon$}\label{section:4}

Theorem~\ref{union-of-bands} shows that the spectrum of $G[\epsilon b] $ is composed of union of bands that may or may not overlap. In this section, we show that for a given $p$, if $\widehat{b}_{2p}\neq 0$, a gap of size $\epsilon$ occurs between $\lambda^\epsilon_{2p-1}$ and $\lambda^\epsilon_{2p}$, namely
\begin{equation*}
 \max_{-\frac{1}{2} < \theta\leq\frac{1}{2}} \lambda_{2p-1}^\epsilon(\theta) =\max_{0\le\theta \leq\frac{1}{2}} \lambda_{2p-1}^\epsilon(\theta)<
 \min_{-\frac{1}{2}<\theta\leq \frac{1}{2}}
 \lambda_{2p}^\epsilon(\theta) =\min_{0\le\theta\leq \frac{1}{2}}
 \lambda_{2p}^\epsilon(\theta) ,
\end{equation*}
where we have used the evenness of the spectrum. 

In the case of one-dimensional Schr\"odinger operators with periodic potentials, bands cannot overlap due to the key property that the eigenvalues, labeled in increasing order are strictly monotone functions of $\theta$, and studying the opening of a gap then reduces to studying the splitting of the eigenvalues at $\theta=0,1/2$. 
However, for $G_\theta[\epsilon b]$, the monotonicity of the $\lambda_p(\theta, \epsilon)$ with respect of $\theta$ is unknown. 
The opening of gaps happens in the neighborhood of $\theta=0,1/2$, and a detailed matching of the inner and outer regions must be done, to ensure that the gap indeed exists.

The main idea of this section and Section~\ref{section:5} is the construction of approximated eigenvalues, and we will use the approximation lemma below which is an extension of a result of Bambusi, Kappeler and Paul, \cite[Proposition~5.1]{BKP15} for operators on finite dimensional spaces, to compact operators on Hilbert spaces. 

\begin{lemma}\label{lemma:quasimodes}
Let $K$ be a compact positive semi-definite self-adjoint operator on a separable Hilbert space $H$.

If $(\lambda^{\app},u^{\app}) \in \mathbb{R}_+ \times H$ satisfies $\norm{u^{\app}} = 1$ and $\norm{K u^{\app} - \lambda^{\app}u^{\app}} \leq \cal E$, then there exists an eigenvalue $\lambda$ of $K$ such that $|\lambda - \lambda^{\app}| \leq \cal E$. 
\end{lemma}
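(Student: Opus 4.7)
The plan is to apply the spectral theorem for compact self-adjoint operators on a separable Hilbert space: since $K$ is compact, self-adjoint, and positive semi-definite, there exists an orthonormal basis $(e_n)_{n\geq 0}$ of $H$ consisting of eigenvectors of $K$, with eigenvalues $\mu_n \geq 0$ (the nonzero $\mu_n$ form a sequence converging to $0$, and $0$ may or may not appear as an eigenvalue). This is the only place where compactness is really used: it upgrades the spectral measure of a general self-adjoint operator to an honest orthonormal basis of eigenvectors, so that the whole of $H$ decomposes as a Hilbert sum of eigenspaces.

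Expanding $u^{\app} = \sum_n c_n e_n$ with $\sum_n |c_n|^2 = \norm{u^{\app}}^2 = 1$, Parseval's identity applied to $K u^{\app} - \lambda^{\app} u^{\app} = \sum_n (\mu_n - \lambda^{\app}) c_n e_n$ yields
\begin{equation*}
\cal E^2 \;\geq\; \norm{K u^{\app} - \lambda^{\app} u^{\app}}^2 \;=\; \sum_n (\mu_n - \lambda^{\app})^2 |c_n|^2.
\end{equation*}
I would then argue by contradiction: if $|\mu_n - \lambda^{\app}| > \cal E$ for \emph{every} eigenvalue $\mu_n$ of $K$, then bounding each factor from below gives
\begin{equation*}
\sum_n (\mu_n - \lambda^{\app})^2 |c_n|^2 \;>\; \cal E^2 \sum_n |c_n|^2 \;=\; \cal E^2,
\end{equation*}
contradicting the previous inequality. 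Hence there must exist an index $n$ with $|\mu_n - \lambda^{\app}| \leq \cal E$, and taking $\lambda := \mu_n$ gives the desired eigenvalue.

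The main obstacle is essentially absent: once the spectral decomposition is in hand, the estimate is a one-line diagonal calculation. An equivalent and slightly more conceptual way to phrase this is via the resolvent identity $\norm{(K-\lambda^{\app})^{-1}} = 1/\dist(\lambda^{\app},\sigma(K))$, valid for any self-adjoint $K$ when $\lambda^{\app}\notin\sigma(K)$: applying $(K-\lambda^{\app})^{-1}$ to the equation $(K-\lambda^{\app})u^{\app} = Ku^{\app}-\lambda^{\app}u^{\app}$ and taking norms gives $\dist(\lambda^{\app},\sigma(K))\leq \cal E$, and the compactness of $K$ guarantees that the nearest spectral point is actually an eigenvalue (either a nonzero isolated one or, if it happens to be $0$ with $0 \notin \sigma_{\mathrm{pp}}(K)$, any sufficiently close genuine eigenvalue produced by the accumulation of $\mu_n$ at $0$).
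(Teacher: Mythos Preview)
Your proof is correct and follows the same approach as the paper: both expand $u^{\app}$ in an orthonormal basis of eigenvectors of $K$, apply Parseval to $Ku^{\app}-\lambda^{\app}u^{\app}$, and deduce that some eigenvalue lies within $\cal E$ of $\lambda^{\app}$. Your contradiction argument in the final step is in fact slightly cleaner than the paper's, which instead shows $\inf_n|\lambda_n-\lambda^{\app}|\le\cal E$ and then argues separately (using that eigenvalues accumulate only at~$0$) that the infimum is attained.
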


\begin{proof}
Let $\{\lambda_n\}_{n\in \N}$ be the set of eigenvalues of $K$ (counted with multiplicity), associated to an orthonormal basis of unitary eigenvectors $\{u_n\}_{n\in \N}$. 
\begin{equation*}
K u^{\app} - \lambda^{\app}u^{\app} = \sum_{n} \lambda_n(u^{\app},u_n)u_n - \sum_n\lambda^{\app}(u^{\app},u_n)u_n . 
\end{equation*}
Therefore,
\begin{equation*}
\norm{K u^{\app} - \lambda^{\app}u^{\app}}^2 = \sum_{n} (\lambda_n - \lambda^{\app})^2(u^{\app},u_n)^2
\geq \inf_{n \in \mathbb N}|\lambda_n - \lambda^{\app}|^2.
\end{equation*}
Thus, $\inf_{n \in \mathbb N}|\lambda_n - \lambda^{\app}| \leq \cal E$. Let $(\lambda_{n_k})\subset \R_+$ be a subsequence such that 
\begin{equation*}
|\lambda_{n_k} - \lambda^{\app}| \Tend{k}{+\infty} \inf_{n \in \mathbb N}|\lambda_n - \lambda^{\app}|.
\end{equation*}
Since there is no accumulation of eigenvalues outside zero and as $\lambda^{\app}+\cal E>0$, there exists $k$ such that $\lambda_{n_k} \in [\lambda^{\app} - \cal E, \lambda^{\app} + \cal E ]$.
\end{proof}

\subsection{Perturbation of double eigenvalues}

Fix $p \in \mathbb{N}^*$. We will prove that, under the assumptions of Theorem~\ref{one_gap-order-eps} part (i) and for $\theta$ small enough, the spectrum of $G_\theta[\epsilon b]$ near $\lambda_{2p}^0(0)$ is composed of two eigenvalues $\lambda_{p\pm}$ separated by a gap of size $\epsilon$. We will use asymptotic expansions to create two approximate eigenvalues $\lambda_{p\pm}^{\app}$ separated by a gap of size $\epsilon$ and show that $\lambda_{p+}$ (resp $\lambda_{p-}$) is in an $\epsilon^{2}$ neighborhood of $\lambda^{\app}_{p+}$ (resp $\lambda^{\app}_{p-}$). This argument relies on Lemma~\ref{lemma:quasimodes}.

To construct an approximate solution of the system \eqref{equation_eigenvalue_problem_S1} for $\epsilon$ small and $\theta \in [0,d_{1,p}\epsilon]$, we use an idea of Chiad\`o-Piat et al \cite[Section 3]{CNR13} and consider simultaneously the two small parameters $\epsilon$ and $\theta$. 

Fix $\theta = \delta\epsilon$ with $\delta \in [0,d_{1,p}]$ ($d_{1,p}>0$ is given in Proposition~\ref{simple-ev} and may be large), and write the following Ansatz for the approximate eigenpair $(\lambda^{\app}_{p\pm}, U^{\app}_{p\pm}) $ in the neighborhood of $\theta = 0$: 
\begin{equation} \label{Ansatz-N}
\left\{
\begin{aligned}
\lambda_{p\pm}(\theta = \delta\epsilon,\epsilon) &\approx \lambda_{2p}^0(0) + \epsilon\lambda'_{p\pm}(\delta) =: \lambda^{\app}_{p\pm}(\delta,\epsilon) \\
\Phi(\theta = \delta\epsilon,\epsilon,x,z) &\approx U_{p\pm}^0(x,z) + \epsilon U'_{p\pm}(\delta,x,z) =: U^{\app}_{p\pm}(\delta,\epsilon,x,z).
\end{aligned}\right.
\end{equation}
Note however that in contrast with the analysis of \cite{CNR13}, our parameter $\delta $ does not depend on $\epsilon$. Like many constants involved, it depends on $p$, the label of the double eigenvalue under consideration, which has been fixed. This expansion will be rigorously justified in Proposition~\ref{quasimodes_order_epsilon_estimate_quasimodes}.
Inserting \eqref{Ansatz-N} into \eqref{equation_eigenvalue_problem_S1}, and formally identifying terms of order $1=\epsilon^0$, we find that $U_{p\pm}^0$ solves the spectral problem for flat bottom with periodic boundary conditions:
\begin{equation}\label{quasimodes_order_epsilon_system_u_0}
\left\{
\begin{aligned}
& -\Delta U_{p\pm}^0 = 0 \qquad \text{in} \; S \\
& \partial_z U_{p\pm}^0 = 0 \quad \text{on } \{z = -1\}, \qquad
 \partial_z U_{p\pm}^0 = \lambda_{2p}^0(0) U_{p\pm}^0 \quad \text{on } \{z = 0\}.
\end{aligned}\right.
\end{equation}
From Section~\ref{sec:fond-plat}, 
 $U_{p\pm}^0 = \alpha^\pm_+ \Phi_p + \alpha^\pm_- \Phi_{-p}$, 
with $\alpha^\pm_+,\alpha^\pm_- \in \mathbb{R}$,
and
$\Phi_p(x,z):=\Phi_p(0,x,z)$ given in \eqref{phi-p} with $\theta=0$.

Identifying the terms of order $\epsilon$, we request that $U'_{p\pm}(\delta,x,z)$ solves:
\begin{equation}\label{quasimodes_order_epsilon_system_u_prime}
\left\{
\begin{aligned}
& -\Delta U'_{p\pm}(\delta) = \div(Q_1\nabla U_{p\pm}^0) + 2i\delta \partial_x U_{p\pm}^0 \qquad \text{in} \; S, \\
& \partial_z U'_{p\pm}(\delta) = -(Q_1\nabla U_{p\pm}^0)\cdot e_z \qquad \text{on} \; \{z = -1\}, \\
& \partial_z U'_{p\pm}(\delta) = \lambda_{2p}^0(0)U'_{p\pm}(\delta) -(Q_1\nabla U_{p\pm}^0)\cdot e_z 
+ \lambda'_{p\pm}(\delta) U_{p\pm}^0 \qquad \text{on} \; \{z = 0\},
\end{aligned}\right.
\end{equation}
where $Q_1$ is given in \eqref{Q1-Qk}.
Before proving the validity of the approximation, we identify the values of $\lambda'_{p\pm}(\delta)$ for which \eqref{quasimodes_order_epsilon_system_u_prime} has a solution.

\begin{proposition}\label{quasimodes_order_epsilon_nazarov_gap_formula}
The system \eqref{quasimodes_order_epsilon_system_u_prime} has a variationnal solution if and only if 
\begin{equation}\label{equation_valeurs_propres_M_p_ordre_epsilon}
M_p \begin{pmatrix}
\alpha^\pm_+ \\ \alpha^\pm_-
\end{pmatrix} = \lambda'_{p\pm}(\delta)\begin{pmatrix}
 \alpha^\pm_+ \\ \alpha^\pm_-
\end{pmatrix}
\end{equation}
with
\[
M_p = 
\begin{pmatrix}
K_p\delta & \widehat{b}_{2p}F_{2p} \\ \overline{\widehat{b}_{2p}}F_{2p} & -K_p\delta
\end{pmatrix},
\quad
F_{2p} = \left(\frac{p}{\cosh(p)}\right)^2, \quad
K_p = \frac{p}{\cosh(p)^2}\left(1+\frac{\sinh(2p)}{2p}\right).
\]
In this case, we have
\begin{equation}\label{expression_lambda_prime_opening_periodic_order_epsilon}
 \lambda_{p\pm}'(\delta) = \pm \left(K_p^2\delta^2 + F_{2p}^2 |\widehat{b}_{2p}|^2\right)^\frac{1}{2} .
\end{equation}
\end{proposition}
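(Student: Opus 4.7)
The idea is a standard Fredholm alternative / solvability condition argument applied to the elliptic problem \eqref{quasimodes_order_epsilon_system_u_prime}. First, I would write its variational formulation: multiplying by a $2\pi$-periodic test function $V\in H^1(S)$ and integrating by parts, the system becomes
\begin{equation*}
 \int_S \nabla U'_{p\pm}\cdot\nabla\overline V - \lambda_{2p}^0(0)\int_\gamma U'_{p\pm}\overline V
 = -\int_S Q_1\nabla U_{p\pm}^0\cdot\nabla\overline V + 2i\delta\int_S \partial_x U_{p\pm}^0\,\overline V + \lambda'_{p\pm}(\delta)\int_\gamma U_{p\pm}^0\,\overline V,
\end{equation*}
where the $Q_1$ boundary terms from the two Neumann-type conditions in \eqref{quasimodes_order_epsilon_system_u_prime} combine with the volume integration by parts exactly to produce $-\int_S Q_1\nabla U^0\cdot\nabla\overline V$. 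The bilinear form on the left is the one associated to $G_0[0]-\lambda_{2p}^0(0)I$; it is self-adjoint and Fredholm of index zero with two-dimensional kernel spanned by $\Phi_p$ and $\Phi_{-p}$ (see Section~\ref{sec:fond-plat}). The solvability condition is therefore that the right-hand side vanish when tested against $V=\Phi_p$ and $V=\Phi_{-p}$.

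Second, I would substitute $U_{p\pm}^0=\alpha^\pm_+\Phi_p+\alpha^\pm_-\Phi_{-p}$ and test successively against $\Phi_p$ and $\Phi_{-p}$ to obtain a $2\times 2$ linear system on $(\alpha^\pm_+,\alpha^\pm_-)$. The normalization term $\int_\gamma \Phi_{\pm p}\,\overline{\Phi_{\pm p}} =2\pi$ (together with $\int_\gamma\Phi_p\overline{\Phi_{-p}}=0$) gives the right-hand side $\lambda'_{p\pm}(\delta)(\alpha^\pm_\pm,\alpha^\pm_\mp)^T$ up to a harmless factor $2\pi$ which can be absorbed in the normalization of the basis. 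The diagonal entries come from
\begin{equation*}
 -\int_S Q_1\nabla\Phi_{\pm p}\cdot\nabla\overline{\Phi_{\pm p}} + 2i\delta\int_S \partial_x\Phi_{\pm p}\,\overline{\Phi_{\pm p}},
\end{equation*}
where the first integral vanishes because $Q_1$ is linear in $b(x)$ with $\widehat b_0=0$ while $|\Phi_{\pm p}|^2$ and $|\partial_z\Phi_{\pm p}|^2$ and $|\partial_x\Phi_{\pm p}|^2$ are $x$-independent, so only the Fourier mode $\widehat b_0$ of $b$ survives; the second integral produces the coefficient $K_p\delta$ (respectively $-K_p\delta$), where $K_p$ arises from the $z$-integral of $|\Phi_p|^2$, namely
\begin{equation*}
 \int_{-1}^0 \frac{\cosh^2(p(z+1))}{\cosh^2 p}\dd z = \frac{1}{2p\cosh^2 p}\Big(p+\tfrac12\sinh(2p)\Big),
\end{equation*}
multiplied by $2p$ (coming from $2i\delta\cdot ip$ and the normalization). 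The off-diagonal entries come from
\begin{equation*}
 -\int_S Q_1\nabla\Phi_{\mp p}\cdot\nabla\overline{\Phi_{\pm p}},
\end{equation*}
where only the Fourier mode $\widehat b_{\pm 2p}$ of $b$ contributes (by orthogonality of the $e^{ikx}$ on $\T_{2\pi}$), and $2i\delta\int_S\partial_x\Phi_{\mp p}\overline{\Phi_{\pm p}}=0$ for the same orthogonality reason. The remaining $z$-integrals combine the $\partial_x$--$\partial_x$ piece $(-p)(-p)$ and the $\partial_z$--$\partial_z$ piece with opposite sign (from the $(2,2)$-entry of $Q_1$ being $+b(x)$ versus the $(1,1)$-entry being $-b(x)$), yielding the factor $F_{2p}=(p/\cosh p)^2$ after a straightforward but slightly tedious hyperbolic trigonometric reduction.

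Third, once the matrix $M_p$ is identified, \eqref{equation_valeurs_propres_M_p_ordre_epsilon} is just the statement that $(\alpha^\pm_+,\alpha^\pm_-)$ is a nontrivial eigenvector of $M_p$; $M_p$ is Hermitian with trace $0$ and determinant $-K_p^2\delta^2-F_{2p}^2|\widehat b_{2p}|^2$, so its eigenvalues are $\pm\sqrt{K_p^2\delta^2+F_{2p}^2|\widehat b_{2p}|^2}$, which is \eqref{expression_lambda_prime_opening_periodic_order_epsilon}.

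\textbf{Main obstacle.} Nothing is conceptually hard, but the explicit evaluation of the four integrals giving the entries of $M_p$ requires patience: one must carefully track the contributions of each block of $Q_1$, exploit the Fourier orthogonality on $\T_{2\pi}$ to collapse the integrals to multiples of $\widehat b_0=0$ (diagonal $Q_1$ terms) and $\widehat b_{\pm 2p}$ (off-diagonal terms), and handle the $z$-integrals of products of $\cosh$ and $\sinh$ to pin down the prefactors $K_p$ and $F_{2p}$ in the precise form stated. The compatibility of the two equations obtained by testing with $\Phi_p$ and with $\Phi_{-p}$—namely that the off-diagonal entries are complex conjugates—follows from the fact that $b$ is real-valued, i.e. $\overline{\widehat b_{2p}}=\widehat b_{-2p}$.
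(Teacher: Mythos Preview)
Your proposal is correct and follows essentially the same route as the paper. The paper's proof differs only in how the solvability condition is made rigorous: rather than invoking the Fredholm alternative for the bilinear form $\int_S\nabla U'\cdot\nabla\overline V-\lambda_{2p}^0(0)\int_\gamma U'\overline V$ directly, the paper represents the right-hand side by Riesz as $(W,\cdot)_{H^1(S)}$, passes to the resolvent $(1+G_0[0])^{-1}$, and reduces to the equation $R_{2p,0}\xi'=-\tau_{2p}^0(0)W|_{z=0}$, whose solvability on $E_{\lambda_{2p}^0(0)}^\perp$ is exactly the pair of orthogonality conditions $L(\Phi_{\pm p})=0$ you identified; the integral computations (your ``main obstacle'') are isolated in the paper as Lemma~\ref{calcul-des-3-integrales} and equations \eqref{coeffs_orthogonality_conditions_U_prime_1}--\eqref{coeffs_orthogonality_conditions_U_prime_3}, with the same Fourier-orthogonality and hyperbolic reductions you describe.
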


\begin{proof}
A solution $U'_{p\pm}(\delta)$ of \eqref{quasimodes_order_epsilon_system_u_prime} satisfies, for all $V \in H^1(S)$ 
\begin{multline}\label{def-L}
\int_S \nabla U'_{p\pm}(\delta)\cdot \nabla \overline{V} - \lambda_{2p}^0(0) \int_{\gamma} U'_{p\pm}(\delta)\overline{V} \\
= -\int_S Q_1 \nabla U_{p\pm}^0 \cdot \nabla \overline{V} + 2i\delta \int_S \partial_x U_{p\pm}^0\overline{V} 
+ \lambda'_{p\pm}(\delta) \int_\gamma U_{p\pm}^0 \overline{V} \;\;:= L(V).
\end{multline}
By Riesz representation theorem, there exists a unique function $W \in H^1(S)$ such that 
\begin{equation*}
L(V) = (W,V)_{H^1(S)} = \int_S \nabla W \cdot \nabla \overline{V} + \int_\gamma W\overline{V}. 
\end{equation*}
Therefore
\begin{equation*}
\int_S \nabla (U'_{p\pm}(\delta)-W)\cdot \nabla \overline{V} + \int_{\gamma} (U'_{p\pm}(\delta)-W)\overline{V} = \frac{1}{\tau_{2p}^0(0)}\int_\gamma U'_{p\pm}(\delta)\overline{V}
\end{equation*}
with $\tau_{2p}^0(0) = (1+ \lambda_{2p}^0(0))^{-1}$, or equivalently,
\begin{equation*}
 a^{R,S}_{0,0}(U'_{p\pm}(\delta)-W,V) = \frac{1}{\tau_{2p}^0(0)}\int_\gamma U'_{p\pm}(\delta)\overline{V},
\end{equation*}
where $a_{0,0}^{R,S}$ is the hermitian form defined by \eqref{definition_a_S}. By Remark~\ref{remark_variational_formulation_S} and Proposition~\ref{proposition_formulation_resolvant_S}, the previous inequality implies that $U'_{p\pm}(\delta)-W$ is the solution of the elliptic problem \eqref{equation_resolvant_S} (when $\theta=\epsilon=0$) for $\xi=\frac{1}{\tau_{2p}^0(0)}\xi'_{p\pm}(\delta):= \frac{1}{\tau_{2p}^0(0)}U'_{p\pm}(\delta)_{\vert_{z=0}}$, hence
\[
\frac{1}{\tau_{2p}^0(0)}(1+G_0[0])^{-1} \xi'_{p\pm}(\delta)=(U'_{p\pm}(\delta)-W)_{\vert_{z=0}}
\]
and
\begin{equation}\label{spectral_problem_xi_prime_order_epsilon}
 \big((1+G_0[0])^{-1} - \tau_{2p}^0(0)\big)\xi'_{p\pm}(\delta) = -\tau_{2p}^0(0) W_{\vert_{z=0}}.
\end{equation}
From \eqref{ortho-0}, Equation \eqref{spectral_problem_xi_prime_order_epsilon} has a solution if and only if
\[
\int_{\T_{2\pi}} W(x,0)e^{\pm ip x} \dd x = 0. 
\]
Using that $\Phi_{\pm p}$ satisfies \eqref{quasimodes_order_epsilon_system_u_0}, we get that 
\begin{equation*}
L(\Phi_{\pm p}) = \int_S \nabla W \cdot \nabla \overline{\Phi_{\pm p}} + \int_\gamma W \overline{\Phi_{\pm p}} = (1+\lambda_{2p}^0(0))\int_\gamma W \overline{\Phi_{\pm p}} 
\end{equation*}
and therefore \eqref{spectral_problem_xi_prime_order_epsilon} has a solution if and only if $L(\Phi_{\pm p}) = 0$, that is
\begin{equation}\label{opening_order_epsilon_equation_lambda_prime}
\begin{aligned}
\lambda'_{p\pm}(\delta)\int_{\gamma} U^0_{p\pm} \overline{\Phi_{p}} &= \int_S Q_1 \nabla U^0_{p\pm} \cdot \nabla \overline{\Phi_{ p}} + 2i\delta \int_S U_{p\pm}^0\overline{\partial_x \Phi_{ p}} , \\
\lambda'_{p\pm}(\delta)\int_{\gamma} U^0_{p\pm} \overline{\Phi_{- p}} &= \int_S Q_1 \nabla U^0_{p\pm} \cdot \nabla \overline{\Phi_{- p}} + 2i\delta \int_S U_{p\pm}^0\overline{\partial_x \Phi_{- p}} .
\end{aligned}
\end{equation}
We have
\begin{equation}\label{coeffs_orthogonality_conditions_U_prime_1}
\int_\gamma U^0_{p\pm} \overline{\Phi_{p}}
= \int_\gamma (\alpha^\pm_+ \Phi_p + \alpha^\pm_- \Phi_{-p})\overline{\Phi_{p}} = 2\pi \alpha^\pm_+ \ , \quad 
\int_\gamma U^0_{p\pm} \overline{\Phi_{-p}} = 2\pi \alpha^\pm_- \ .
\end{equation}
To compute the first term in the right-hand sides of \eqref{opening_order_epsilon_equation_lambda_prime}, we use the following lemma whose proof is given in Appendix~\ref{app-A}.

\begin{lemma} \label{calcul-des-3-integrales}
\begin{align}
\int_S Q_1 \nabla \Phi_p \cdot \nabla \overline{\Phi_p} &= \int_S Q_1 \nabla \Phi_{-p} \cdot \nabla \overline{\Phi_{-p}} = 0 \\
\int_S Q_1 \nabla \Phi_p \cdot \nabla \overline{\Phi_{-p}} &= 2\pi\left(\frac{p}{\cosh(p)}\right)^2\overline{\widehat{b}_{2p}}, \label{integral_Q1_nabla_U_0_nabla_phi_q}\\
\int_S Q_1 \nabla \Phi_{-p} \cdot \nabla \overline{\Phi_{p}} &= 2\pi\left(\frac{p}{\cosh(p)}\right)^2\widehat{b}_{2p}.\label{integral_Q1_nabla_U_0_nabla_phi_q2}
\end{align}
\end{lemma}

We are left to compute
\begin{equation}\label{coeffs_orthogonality_conditions_U_prime_2}
\begin{aligned}
2i\delta \int_S U_{p\pm}^0\overline{\partial_x \Phi_{p}}
&= 2i(-ip)\delta \int_S(\alpha^\pm_+ \Phi_p + \alpha^\pm_- \Phi_{-p})\overline{\Phi_p} \\
 &= 4\pi p \delta \alpha^\pm_+ \int_{-1}^0 \frac{\cosh(p(z+1))^2}{\cosh(p)^2}\dd z \\
&= 2\pi \delta \alpha^\pm_+\frac{p}{\cosh(p)^2}\int_{-1}^0 (1+\cosh(2p(z+1))\dd z \\
&= 2\pi \delta \alpha^\pm_+\frac{p}{\cosh(p)^2}\left(1+\frac{\sinh(2p)}{2p}\right)
\end{aligned}
\end{equation}
and
\begin{equation}
\label{coeffs_orthogonality_conditions_U_prime_3}
\begin{aligned}
2i\delta \int_S U_{p\pm}^0\overline{\partial_x \Phi_{-p}}
= -2\pi \delta \alpha^\pm_-\frac{p}{\cosh(p)^2}\left(1+\frac{\sinh(2p)}{2p}\right).
\end{aligned}
\end{equation}
Using \eqref{coeffs_orthogonality_conditions_U_prime_1}, \eqref{integral_Q1_nabla_U_0_nabla_phi_q}, \eqref{coeffs_orthogonality_conditions_U_prime_2} and \eqref{coeffs_orthogonality_conditions_U_prime_3}, we can rewrite \eqref{opening_order_epsilon_equation_lambda_prime} as \eqref{equation_valeurs_propres_M_p_ordre_epsilon}. We find the value of $\lambda_{p\pm}'(\delta)$ given by Proposition~\ref{quasimodes_order_epsilon_nazarov_gap_formula} by computing the eigenvalues of $M_p$. 
\end{proof}

Denote 
$\xi_{p\pm}^{\app} := U^{\app}_{p\pm}{\vert_{z=0}}, \;\;
\tau_{p\pm}^{\app} := (1+\lambda_{p\pm}^{\app})^{-1}$.
The next result shows that $(\tau_{p\pm}^{\app},\xi_{p\pm}^{\app})$ are approximate eigenpairs of the resolvent operator.

\begin{proposition}\label{quasimodes_order_epsilon_estimate_quasimodes}
Under the assumptions of Proposition~\ref{quasimodes_order_epsilon_nazarov_gap_formula}, and assuming $\epsilon$ is small enough (depending only on $b$ and $p$), we have for all $\delta\in [0,d_{p,1}]$
\begin{equation*}
 \norm{(1+G_{\delta \epsilon}[\epsilon b])^{-1}\xi_{p\pm}^{\app} - \tau_{p\pm}^{\app}\xi_{p\pm}^{\app}}_{L^2(\T_{2\pi})} \leq C_{b,p} \epsilon^2\norm{\xi_{p\pm}^{\app}}_{L^2(\T_{2\pi})}.
\end{equation*}
\end{proposition}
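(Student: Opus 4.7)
The plan is to view $(\lambda^{\app}_{p\pm},U^{\app}_{p\pm})$ as a quasi-solution of the straightened elliptic eigenvalue problem \eqref{equation_eigenvalue_problem_S1} on $S$ with Bloch parameter $\theta=\delta\epsilon$, and to convert the $O(\epsilon^2)$ residual in the equation into an $O(\epsilon^2)$ bound in $H^1(S)$ via the uniform coercivity of $a^{R,S}_{\theta,\epsilon}$ (Remark~\ref{remark_variational_formulation_S}). Taking traces on $\gamma$ and rewriting via Proposition~\ref{proposition_formulation_resolvant_S} then delivers the desired $L^2$-estimate on the resolvent.

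\textbf{Step 1 (residual of the Ansatz).} I would substitute $U^{\app}_{p\pm}=U^0_{p\pm}+\epsilon U'_{p\pm}$ and $\lambda^{\app}_{p\pm}=\lambda_{2p}^0(0)+\epsilon\lambda'_{p\pm}(\delta)$ into the bulk equation and boundary conditions of \eqref{equation_eigenvalue_problem_S1} with $\theta=\delta\epsilon$, using the expansion $P(\Sigma)=I_{2}+\epsilon Q_{1}+\sum_{k\geq 2}\epsilon^k Q_k$ from \eqref{Q1-Qk}. By construction, the order-$\epsilon^0$ terms cancel because of \eqref{quasimodes_order_epsilon_system_u_0}, and the order-$\epsilon^1$ terms cancel because of \eqref{quasimodes_order_epsilon_system_u_prime}. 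What survives is a collection of $O(\epsilon^2)$ contributions: the higher-order part $\sum_{k\geq 2}\epsilon^k Q_k\nabla U^{\app}_{p\pm}$, the products $2i\delta\epsilon(e_1+\epsilon\binom{-b}{zb'})\cdot\nabla(\epsilon U'_{p\pm})$, the diagonal term $(\delta\epsilon)^2(1-\epsilon b)U^{\app}_{p\pm}$, and on $\gamma$ the product $\epsilon\lambda'_{p\pm}\cdot\epsilon U'_{p\pm}$. Each is bounded in $H^{-1/2}(S)$ or $L^2(\gamma)$ by $C_{b,p}\epsilon^{2}$ using $b\in C^2$, the explicit smoothness of $U^0_{p\pm}$ from \eqref{phi-p}, and the $H^{3/2}(S)$ regularity of $U'_{p\pm}$ granted by elliptic regularity on the flat strip, as recalled at the end of Section~\ref{sec:fond-plat}.

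\textbf{Step 2 (variational comparison).} Next, I would set $\xi:=(1+\lambda^{\app}_{p\pm})\,\xi^{\app}_{p\pm}$ and rewrite Step~1 at the level of \eqref{equation_resolvant_S}. Reorganising the residuals into a single antilinear functional $R_\epsilon$ on $H^1(S)$, one obtains
\begin{equation*}
a^{R,S}_{\delta\epsilon,\epsilon}(U^{\app}_{p\pm},V)=(1+\lambda^{\app}_{p\pm})\int_\gamma \xi^{\app}_{p\pm}\,\overline{V}+R_\epsilon(V),\qquad |R_\epsilon(V)|\leq C_{b,p}\epsilon^{2}\|V\|_{H^1(S)}.
\end{equation*}
Let $\Phi^{\rm true}\in H^1(S)$ denote the unique solution of $a^{R,S}_{\delta\epsilon,\epsilon}(\Phi^{\rm true},V)=(1+\lambda^{\app}_{p\pm})\int_\gamma \xi^{\app}_{p\pm}\overline{V}$ provided by Proposition~\ref{proposition_formulation_resolvant_S}; its trace on $\gamma$ equals $(1+\lambda^{\app}_{p\pm})(1+G_{\delta\epsilon}[\epsilon b])^{-1}\xi^{\app}_{p\pm}$. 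Subtracting and applying coercivity of $a^{R,S}_{\delta\epsilon,\epsilon}$ uniformly in $(\theta,\epsilon)$ yields $\|U^{\app}_{p\pm}-\Phi^{\rm true}\|_{H^1(S)}\leq C_{b,p}\epsilon^{2}$, and the trace theorem gives the same bound in $L^{2}(\gamma)$. Dividing by $1+\lambda^{\app}_{p\pm}$, which is bounded above and away from zero for $\epsilon$ small, and using that $\|\xi^{\app}_{p\pm}\|_{L^2(\T_{2\pi})}$ is bounded below by a positive constant (after the standard normalisation $|\alpha^\pm_+|^{2}+|\alpha^\pm_-|^{2}=1$ of the eigenvector of $M_p$, see \eqref{equation_valeurs_propres_M_p_ordre_epsilon}, so that $U^0_{p\pm}|_{z=0}$ is a unit combination of $e^{\pm ipx}/\sqrt{2\pi}$) gives the proposition.

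\textbf{Main obstacle.} Nothing in the above is conceptually difficult once the construction of $(U^0_{p\pm},U'_{p\pm},\lambda'_{p\pm})$ is in hand; the delicate point is the systematic bookkeeping in Step~1, where one must verify that every order-$\epsilon$ contribution cancels in the bulk and on both $\{z=0\}$ and $\{z=-1\}$. On the free surface this hinges precisely on the appearance of $\lambda'_{p\pm}U^0_{p\pm}$ in \eqref{quasimodes_order_epsilon_system_u_prime}, and hence on the solvability condition encoded by Proposition~\ref{quasimodes_order_epsilon_nazarov_gap_formula}; in the bulk it uses the identification of the source $\div(Q_1\nabla U^0_{p\pm})+2i\delta\partial_xU^0_{p\pm}$. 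Beyond this, the proof reduces to the same Lax--Milgram / trace argument already deployed in Propositions~\ref{proposition_variation_formulation_resolvent} and~\ref{proposition_formulation_resolvant_S}.
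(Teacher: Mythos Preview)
Your proposal is correct and follows essentially the same route as the paper. The paper also inserts $U^{\app}_{p\pm}$ into the sesquilinear form $a^{R,S}_{\delta\epsilon,\epsilon}$, isolates an $O(\epsilon^2)$ remainder functional $\widetilde{L}$ (your $R_\epsilon$), represents it by a function $\widetilde{W}$ via Lax--Milgram so that $U^{\app}_{p\pm}-\widetilde{W}$ solves exactly the resolvent problem with datum $(1+\lambda^{\app}_{p\pm})\xi^{\app}_{p\pm}$ (this $U^{\app}_{p\pm}-\widetilde{W}$ is your $\Phi^{\rm true}$), and then takes traces and uses coercivity; the lower bound on $\|\xi^{\app}_{p\pm}\|_{L^2}$ is obtained in the paper from coercivity of $a^{R,S}_{\delta\epsilon,\epsilon}$ rather than your direct normalisation of $(\alpha^\pm_+,\alpha^\pm_-)$, and the bound on $\|U'_{p\pm}\|_{H^1(S)}$ is phrased through the inverse $R_{2p,0}^{-1}$ of \eqref{operateur-R_pinv}, but these are cosmetic variants of the same argument.
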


\begin{proof}
Inserting $U^{\app}_{p\pm} =U_{p\pm}^0 + \epsilon U'_{p\pm}$ in \eqref{definition_a_S}, we find that, for $V \in H^1(S)$, 
\begin{align*}
a^{R,S}_{\delta\epsilon,\epsilon}(U^{\app}_{p\pm} ,V) =&
\int_S P(\Sigma)\nabla U^{\app}_{p\pm} \cdot \nabla \overline{V} + \delta^2\epsilon^2 \int_S (1-\epsilon b(x)) U^{\app}_{p\pm} \overline{V} \\
&+ \int_\gamma U^{\app}_{p\pm} \overline{V} +i\delta\epsilon \int_{S} \left(e_1 +\epsilon\begin{pmatrix}
-b(x) \\
z b'(x)
\end{pmatrix}\right) \cdot( U^{\app}_{p\pm} \nabla \overline{V} -\overline{V}\nabla U^{\app}_{q\pm} ) \\
=& (\lambda_{p\pm}^{\app}+1)\int_\gamma U^{\app}_{p\pm}\overline{V} + \widetilde{L}(V)
\end{align*}
where
\begin{align*}
\widetilde{L}(V) =& -\epsilon^2\lambda_p'(\delta)\int_\gamma U'_{p\pm}(\delta)\overline{V} +\epsilon^2 \int_S \widetilde{Q}\nabla U^{\app}_{p\pm}\cdot \nabla \overline{V} + \epsilon^2\int_S Q_1 \nabla U'_{p\pm}(\delta)\nabla \overline{V} 
\\
&+\delta^2\epsilon^2\int_S(1-\epsilon b(x)) U^{\app}_{p\pm}\overline{V} 
+i\delta\epsilon^2 \int_S \begin{pmatrix}
-b(x) \\
z b'(x)
\end{pmatrix}\cdot( U^{\app}_{p\pm}\nabla\overline{V}-\overline{V} \nabla U^{\app}_{p\pm} ) \\
&+ i\epsilon^2\delta\int_S ( U'_{p\pm}(\delta)\partial_x \overline{V} - \overline{V}\partial_x U'_{p\pm}(\delta)). 
\end{align*}
 and $\epsilon^2\widetilde{Q} =P(\Sigma)-I_2 -\epsilon Q_1 $.
By Lax-Milgram, there exists $\widetilde{W}$ such that $a_{\delta\epsilon,\epsilon}^{R,S}(\widetilde{W},\cdot) = \widetilde{L} (\cdot) $. Using that $\tau_{p\pm}^{\app} = (1+\lambda_{p\pm}^{\app})^{-1}$, we get 
\begin{equation}\label{formulation_variationnelle_vp_approche_ordre_epsilon}
a_{\delta\epsilon,\epsilon}^{R,S}(U^{\app}_{p\pm} - \widetilde{W},V) = \frac{1}{\tau_{p\pm}^{\app}}\int_\gamma U^{\app}_{p\pm} \overline{V}. 
\end{equation}
From Remark~\ref{remark_variational_formulation_S} and Proposition~\ref{proposition_formulation_resolvant_S}, the previous inequality implies that $U^{\app}_{p\pm} - \widetilde{W}$ is the solution of the elliptic problem \eqref{equation_resolvant_S} (when $\theta=\delta\epsilon$) for $\xi=\frac{1}{\tau_{p\pm}^{\app}}\xi_{p\pm}^{\app} = \frac{1}{\tau_{p\pm}^{\app}}U^{\app}_{p\pm}\vert_{z=0}$, hence
\[
\frac{1}{\tau_{p\pm}^{\app}}(1+G_{\delta\varepsilon}[\varepsilon b])^{-1} \xi_{p\pm}^{\app} =(U^{\app}_{p\pm} - \widetilde{W})\vert _{z=0}
\]
thus, 
\begin{equation}\label{norme_quasimode_non_normalise_ordre_epsilon}
\norm{(1+G_{\delta\epsilon}[\epsilon b])^{-1}\xi_{p\pm}^{\app} - \tau_{p\pm}^{\app}\xi_{p\pm}^{\app}}_{L^2} \leq C_p\|\widetilde{W}\|_{H^1(S)} \leq C_p\|\widetilde{L}\|_{H^1(S)'},
\end{equation}
with
\begin{equation}\label{operateur-L}
 \|\widetilde{L}\|_{H^1(S)'} \leq C_b \epsilon^2\big(\norm{U_{p\pm}^0}_{H^1(S)}+\norm{U'_{p\pm}(\delta)}_{H^1(S)}\big).
\end{equation}
Using \eqref{spectral_problem_xi_prime_order_epsilon}, we may choose $U'_{p\pm} (\delta)_{\vert_{z=0}}= \xi'_{p\pm}(\delta) = - \tau_{2p}^0(0) R_{2p,0}^{-1}(W_{\vert_{z=0}})$ where $R_{2p,0}^{-1}$ is the operator defined in \eqref{operateur-R_pinv}. Therefore 
\begin{equation*}
\norm{U'_{p\pm}(\delta)}_{H^1(S)} \leq C_p\|W\|_{H^1(S)} \leq C_{b,p}\norm{L}_{H^1(S)'} \leq C_{b,p}\norm{U_{p\pm}^0}_{H^1(S)} ,
\end{equation*}
where $L$ is defined in \eqref{def-L}, and it follows that
\begin{equation*}
 \|\widetilde{L}\|_{H^1(S)'} \leq C_{b,p}\epsilon^2\norm{U_{p\pm}^0}_{H^1(S)}
\end{equation*}
and if $\epsilon$ is small enough (depending on $p$ and $b$)
\begin{equation}\label{borne_inf_norme_U_app_ordre_epsilon}
 \norm{U^{\app}_{p\pm}}_{H^1(S)} \geq \norm{U_{p\pm}^0}_{H^1(S)}(1 - C_{b,p}\epsilon)
 \geq \frac{1}{2}\norm{U_{p\pm}^0}_{H^1(S)}.
\end{equation}
From \eqref{formulation_variationnelle_vp_approche_ordre_epsilon} and the coercivity of $a^{R,S}_{\theta,\epsilon}$ independently of $\theta$ and $\epsilon$, we get that if $\epsilon$ is small enough,
\begin{equation*}
 \norm{\xi^{\app}_{p\pm}}_{L^2(\T_{2\pi})} \geq c_{b,p}(\norm{U^{\app}_{p\pm}}_{H^1(S)} - \|\widetilde{W}\|_{H^1(S)}) \geq c_{b,p}\norm{U^0_{p\pm}}_{H^1(S)}.
\end{equation*}
Estimate of Proposition~\ref{quasimodes_order_epsilon_estimate_quasimodes} results from combining the above equation with \eqref{operateur-L} and \eqref{norme_quasimode_non_normalise_ordre_epsilon}.
\end{proof}

\begin{proof}[Proof of Theorem~\ref{one_gap-order-eps}]
Let $\delta \in [0,d_{1,p}]$, $\theta = \delta\epsilon \in [0,d_{1,p}\epsilon]$, and $ u^{\app}_{p\pm}= \xi^{\app}_{p\pm}/ \| \xi^{\app}_{p\pm}\| _{L^2(\T_{2\pi})} $.
We now apply Lemma~\ref{lemma:quasimodes} for operator $K_\theta(\epsilon)= (1+G_\theta[\epsilon b] )^{-1}$ with pairs $( \tau_{p+}^{\app}(\delta,\epsilon), u^{\app}_{p+})$ and $(\tau_{p-}^{\app}(\delta,\epsilon), u^{\app}_{p-})$.

If $\epsilon < \epsilon_p$ (for some $\epsilon_p$ depending only on $b$ and $p$), there exist two eigenvalues $\tau_{p\pm}^\epsilon(\theta)$ of $(1+G_\theta[\epsilon b])^{-1}$ such that $|\tau_{p\pm}^\epsilon(\theta) - \tau_{p\pm}^{\app}(\theta\epsilon^{-1},\epsilon)| \leq C_p\epsilon^{2}$. 
Consequently, there exist two eigenvalues $\lambda_{p\pm}^\epsilon(\theta)$ of $G_\theta[\epsilon b]$ such that $|\lambda_{p\pm}^\epsilon(\theta) - \lambda_{p\pm}^{\app}(\theta\epsilon^{-1},\epsilon)| \leq C_p\epsilon^{2}$. Using the expression \eqref{expression_lambda_prime_opening_periodic_order_epsilon} for $\lambda_{p\pm}'(\delta)$, we get 
\[|\lambda_{p\pm}^\epsilon(\theta) - \lambda_{2p}^0(0)| \leq C_p \epsilon^{2} +C_p\epsilon \leq 2C_p\epsilon<\frac18,
\]
for $\epsilon \leq \epsilon_{p}$.
By Proposition~\ref{double-ev0}, the spectrum of $G_\theta[\epsilon b]$ has exactly two eigenvalues in a neighborhood of $\lambda^0_{2p}(0)$, therefore $\lambda_{2p-1}^\epsilon(\theta)=\lambda_{p-}^\epsilon(\theta)$ and $\lambda_{2p}^\epsilon(\theta)=\lambda_{p+}^\epsilon(\theta)$.
Note that
\begin{align*}
 \lambda_{2p}^\epsilon(\theta) \geq& \lambda^{\app}_{p+}(\delta,\epsilon) - |\lambda_{p+}^\epsilon(\theta)-\lambda^{\app}_{p+}(\theta\epsilon^{-1},\epsilon)| \\
 \geq& \lambda_{2p}^0(0) + \epsilon\lambda_{p+}'(\delta) - C_p\epsilon^{2} 
 \geq \lambda_{2p}^0(0) + F_{2p}|\widehat{b}_{2p}|\epsilon - C_p\epsilon^{2}.
\end{align*}
and similarly, 
\begin{equation*}
\lambda_{2p-1}^\epsilon(\theta) \leq \lambda_{2p}^0(0) -
F_{2p}|\widehat{b}_{2p}|\epsilon + C_p\epsilon^{2}.
\end{equation*}
Thus, for $0\leq\theta \leq d_{p,1} \epsilon$, we have obtained a lower bound for the separation between the two eigenvalues of $\sigma(G_\theta[\epsilon b])$ in the vicinity of $\theta=0$, if $\widehat{b}_{2p}\neq 0$:
\begin{equation*}
 \lambda_{2p}^\epsilon(\theta) - \lambda_{2p-1}^\epsilon(\theta) \geq 2F_{2p}|\widehat{b}_{2p}|\epsilon -C_p \epsilon^{2}.
\end{equation*}
Let us note that it is the first time in this section that we need $\widehat{b}_{2p}\neq 0$.

A last step is needed to show that the gap remains open for $\theta$ far from $0$. For this, we return to estimates for the perturbation of simple eigenvalues.
In Proposition~\ref{simple-ev}, we proved that, for sufficiently small $\epsilon$, and $ d_{p,1} \epsilon \le \theta \le \frac12- d_{p,1} \epsilon$, $\lambda_{2p}^\epsilon(\theta)$ is larger than $\lambda_{2p}^0(0) + F_{2p}|\widehat{b}_{2p}|\epsilon$ and $\lambda_{2p-1}^\epsilon(\theta)$ is smaller than $\lambda_{2p}^0(0) - F_{2p}|\widehat{b}_{2p}|\epsilon$.

Finally, to find the precise size of the gap, we take $\theta$ and $\delta$ very small, say $\delta\leq \epsilon$ hence $\theta =\delta \epsilon \le \epsilon^2$. From \eqref{expression_lambda_prime_opening_periodic_order_epsilon}, 
\begin{equation*}
 \lambda_{p\pm}'(\delta) =\pm F_{2p}|\widehat{b}_{2p}| + \mathcal{O}(\epsilon), \;\;
 \lambda_{p\pm}^{\app} = \lambda_{2p}^0(0) \pm \epsilon F_{2p}|\widehat{b}_{2p}| + \mathcal{O}(\epsilon^2)
\end{equation*}
and
\begin{equation*}
 |\lambda_{p\pm}^\epsilon (\theta) - (\lambda_{2p}^0(0) \pm F_{2p}|\widehat{b}_{2p}|\epsilon)| \leq C_p\epsilon^{2}.
\end{equation*}
This gives the precise size of the gap opening, centered at $\lambda_{2p}^0(0)$ of length $2\epsilon F_{2p}|\widehat{b}_{2p}|$ plus small corrections.

For the proof of Theorem~\ref{one_gap-order-eps} (ii), setting $\Phi_\theta := e^{i\theta x}\Phi$, we observe that the spectral problem \eqref{equation_eigenvalue_problem_S1} can be written
\begin{equation*}
\int_S P(\Sigma)\nabla \Phi_\theta \cdot \nabla \overline{V} = \lambda(\theta,\epsilon)\int_\gamma \Phi_\theta \overline{V} \; \quad \text{for all } V \in H^1_\theta(S). 
\end{equation*}
where $\Phi_\theta \in H^1_\theta(S)$, the space of $\theta$-periodic functions in $x$ (that is $\Phi_\theta\vert_{x=2\pi} = e^{2i\pi\theta}\Phi_\theta\vert_{x=0}$). 

Set $\beta = \frac{1}{2}-\theta$ and $\widetilde{\Phi}_\theta := e^{i\beta x}\Phi_\theta $ then $\widetilde{\Phi}_\theta \in H_{\frac{1}{2}}^1(S)$ (i.e $\widetilde{\Phi}_\theta$ antiperiodic). As we did in Proposition~\ref{proposition_formulation_resolvant_S} and Remark~\ref{remark_variational_formulation_S}, we can show that the spectral problem above is equivalent to
\begin{align*}
&\int_S \Big[P(\Sigma)\nabla \widetilde{\Phi}_\theta \cdot \nabla \overline{V} 
-i\beta \Big(e_1 +\epsilon\begin{pmatrix}-b(x) \\z b'(x)\end{pmatrix}\Big) 
\cdot( \widetilde{\Phi}_\theta \nabla \overline{V}-\overline{V} \nabla \widetilde{\Phi}_\theta)\Big]\\
& + \int_S \beta^2 (1-\epsilon b)\widetilde{\Phi}_\theta \overline{V} = \lambda\big(\tfrac{1}{2}-\beta,\epsilon\big)\int_{\gamma} \widetilde{\Phi}_\theta \overline{V} \quad \text{for all} \; V \in H^1_{\frac{1}{2}}(S). 
\end{align*}
Then, as for the periodic case, we use the Ansatz $\beta = \delta \epsilon$ and
\begin{equation*} 
\left\{
\begin{aligned}
\lambda_{p\pm}(\beta = \delta\epsilon,\epsilon) &\approx \lambda_{2p}^0\big(\tfrac{1}{2}\big) + \epsilon\lambda'_{p\pm}(\delta) =: \lambda^{\app}_{p\pm}(\delta,\epsilon) \\
\widetilde{\Phi}_\theta(\beta = \delta\epsilon,\epsilon,x,z) &\approx \widetilde{U}_{p\pm}^0(x,z) + \epsilon \widetilde{U}'_{p\pm}(\delta,x,z) =: \widetilde{U}^{\app}_{p\pm}(\delta,\epsilon,x,z).
\end{aligned}\right.
\end{equation*}
where $\widetilde{U}_{p\pm}^0 = \alpha^\pm_+ \Psi_p + \alpha^\pm_- \overline{\Psi_{p}}$, 
with $\alpha^\pm_+,\alpha^\pm_- \in \mathbb{R}$ and
\begin{align*}
 \Psi_p(x,z) = \frac{\cosh\big(\big(p+\frac{1}{2}\big)(z+1)\big)}{\cosh\big(p+\frac{1}{2}\big)}e^{i\big(p+\frac{1}{2}\big)x}
\end{align*}
and its conjugate are the eigenvectors associated to $\lambda_{2p}^0\big(\frac{1}{2}\big) = \lambda_{2p+1}^0\big(\frac{1}{2}\big)$. The rest of the proof proceeds as in the periodic case with very similar computations.
\end{proof}

\section{Gap opening of order $\epsilon^2$}\label{section:5}

When $\widehat{b}_{2p} =0$, a higher order asymptotic expansion for the study of eigenvalues close to $\theta=0$ is required to open a gap. We will construct an expansion valid for $\theta \in [0,M\epsilon^{2}]$ for any $M>0$. In order to show that the gap does not close for $ \theta \in [ M\epsilon^{2}, d_{p,1}\epsilon]$, (a region not covered in Proposition~\ref{simple-ev}), we use the end of Section~\ref{section:4} in the special case $\widehat{b}_{2p} =0$ which will be sufficient to control the separation of eigenvalues in this region. As it was made in Proposition~\ref{simple-ev} with the condition on $d_{p,2}$, we will need at the end of this section that the eigenvalues are enough separated. For this purpose, we set
\[
M_p : = 1 +|J_p(b)|+|S_p(b)|
\]
where $J_p(b)$ and $S_p(b)$ are defined in Theorem~\ref{one_gap-order-eps2}.

\begin{proposition}[Perturbation of a simple eigenvalue] \label{single-ev-higher-order}
Fix $p \in \mathbb{N}^\ast$ and assume $\widehat{b}_{2p} =0$. There exist $\epsilon_{p,5}>0$ and $d_{p,5}>0$ depending on $p$ and $b$, such that, for all $\epsilon \in (0,\epsilon_{p,5})$, and $ d_{p,5}\epsilon^{2} \le \theta \le d_{p,1}\epsilon$, we have
\begin{equation*}
 \sigma(G_\theta[\epsilon b]) \cap \Big[\lambda_{2p-1}^{0} (\tfrac12) , \lambda_{2p}^{0} (\tfrac12)\Big] = \{\lambda_{2p-1}^{\epsilon} (\theta), \lambda_{2p}^{\epsilon} (\theta) \}
\end{equation*}
where $\lambda_{2p-1}^{\epsilon} (\theta)$ and $\lambda_{2p}^{\epsilon} (\theta)$ are simple.
Moreover,
\begin{equation*}
 \lambda_{2p-1}^{\epsilon} (\theta) \leq\lambda_{2p}^{0} (0) - M_p \epsilon^2, \quad
 \lambda_{2p}^{\epsilon} (\theta) \geq\lambda_{2p}^{0} (0) + M_p \epsilon^2.
\end{equation*}
\end{proposition}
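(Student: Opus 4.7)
The strategy is to reuse the quasimode construction of Section~\ref{section:4}, specialized to the case $\widehat{b}_{2p}=0$, and combine it with the counting statement of Proposition~\ref{double-ev0} to conclude simplicity and separation. Throughout we write $\theta=\delta\epsilon$ with $\delta=\theta/\epsilon$, so the hypothesis $\theta\in[d_{p,5}\epsilon^2,d_{p,1}\epsilon]$ corresponds to $\delta\in[d_{p,5}\epsilon,d_{p,1}]$, which lies in the range of validity of Proposition~\ref{quasimodes_order_epsilon_estimate_quasimodes}.

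First, I would observe that when $\widehat{b}_{2p}=0$, the explicit formula \eqref{expression_lambda_prime_opening_periodic_order_epsilon} collapses to $\lambda_{p\pm}'(\delta)=\pm K_p\delta$, so the approximate eigenvalues built in Section~\ref{section:4} become
\[
\lambda_{p\pm}^{\app}(\delta,\epsilon)=\lambda_{2p}^{0}(0)\pm K_p\,\theta.
\]
Proposition~\ref{quasimodes_order_epsilon_estimate_quasimodes} then yields quasimodes $(\tau_{p\pm}^{\app},\xi_{p\pm}^{\app})$ for the resolvent $(1+G_\theta[\epsilon b])^{-1}$ with residual of size $C_{b,p}\epsilon^2\|\xi_{p\pm}^{\app}\|_{L^2}$, uniformly in $\delta\in[0,d_{p,1}]$. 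Applying Lemma~\ref{lemma:quasimodes} and translating back via $\lambda=\tau^{-1}-1$, there exist eigenvalues $\mu_{p\pm}^\epsilon(\theta)$ of $G_\theta[\epsilon b]$ satisfying
\[
\bigl|\mu_{p\pm}^\epsilon(\theta)-\lambda_{2p}^{0}(0)\mp K_p\,\theta\bigr|\le C_{b,p}\,\epsilon^{2}.
\]

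Second, for $\theta\ge d_{p,5}\epsilon^2$ this immediately gives
\[
\mu_{p+}^\epsilon(\theta)\ge \lambda_{2p}^{0}(0)+(K_p d_{p,5}-C_{b,p})\epsilon^2,\qquad
\mu_{p-}^\epsilon(\theta)\le \lambda_{2p}^{0}(0)-(K_p d_{p,5}-C_{b,p})\epsilon^2.
\]
Choosing $d_{p,5}:=(M_p+C_{b,p})/K_p$ (which depends only on $p$ and $b$) produces the required $M_p\epsilon^2$ separation from $\lambda_{2p}^{0}(0)$, and in particular $\mu_{p+}^\epsilon(\theta)-\mu_{p-}^\epsilon(\theta)\ge 2M_p\epsilon^2>0$, so $\mu_{p+}^\epsilon$ and $\mu_{p-}^\epsilon$ are genuinely distinct eigenvalues.

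Third, I would invoke Proposition~\ref{double-ev0} to close the argument: for $\theta\in[0,d_{p,1}\epsilon]$ the interval $[\lambda_{2p-1}^{0}(\tfrac12),\lambda_{2p}^{0}(\tfrac12)]$ contains exactly two eigenvalues of $G_\theta[\epsilon b]$ counted with multiplicity, labelled $\lambda_{2p-1}^\epsilon(\theta)\le\lambda_{2p}^\epsilon(\theta)$. For $\epsilon$ small enough (depending on $b,p$), both $\mu_{p\pm}^\epsilon(\theta)$ lie in this interval, so by counting they must coincide with $\lambda_{2p-1}^\epsilon(\theta)$ and $\lambda_{2p}^\epsilon(\theta)$ respectively; their distinctness forces simplicity. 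This gives the claimed description of $\sigma(G_\theta[\epsilon b])\cap[\lambda_{2p-1}^{0}(\tfrac12),\lambda_{2p}^{0}(\tfrac12)]$ and the two-sided bounds.

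The main obstacle, or rather the subtle point, is that in this regime the size of the gap between the two eigenvalues of the unperturbed double point is $2K_p\theta$, which is only of order $\epsilon^2$ at the lower end $\theta\sim\epsilon^2$; this is precisely why the quasimode residual of order $\epsilon^2$ does not suffice to separate them below $\theta=d_{p,5}\epsilon^2$, and it is the reason for the explicit threshold $d_{p,5}\epsilon^2$ in the statement. The matching with Proposition~\ref{simple-ev} at the upper end $\theta=d_{p,1}\epsilon$ is automatic since both arguments yield the same eigenvalues.
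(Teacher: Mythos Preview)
Your proof is correct and follows essentially the same route as the paper: reuse the Section~\ref{section:4} quasimodes, specialize \eqref{expression_lambda_prime_opening_periodic_order_epsilon} to $\widehat{b}_{2p}=0$ so that $\lambda_{p\pm}'(\delta)=\pm K_p\delta$ and hence $\lambda^{\app}_{p\pm}=\lambda_{2p}^0(0)\pm K_p\theta$, then choose $d_{p,5}=(M_p+C_{b,p})/K_p$. The paper's own proof is more telegraphic---it simply cites ``the end of Section~\ref{section:4}'' for the identification of the two eigenvalues with $\lambda_{2p-1}^\epsilon,\lambda_{2p}^\epsilon$ and does not spell out the simplicity argument---so your explicit appeal to Proposition~\ref{double-ev0} for counting and simplicity is a welcome clarification rather than a deviation.
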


\begin{proof}
As noted at the end of Section~\ref{section:4}, we can use the analysis developed therein to state
\begin{align*}
 \lambda_{2p}^\epsilon(\theta) \geq& \lambda^{\app}_{p+}(\theta,\epsilon) - |\lambda_{p+}^\epsilon(\theta)-\lambda^{\app}_{p+}(\theta\epsilon^{-1},\epsilon)| \\
 \geq& \lambda_{2p}^0(0) + \epsilon\lambda_{p+}'(\delta) - C_p\epsilon^{2} 
 = \lambda_{2p}^0(0) + K_p\theta - C_p\epsilon^{2}.
\end{align*}
for $\theta\in [0,d_{p,1}\epsilon]$, where we have now used $\widehat{b}_{2p} =0$ in the expression \eqref{expression_lambda_prime_opening_periodic_order_epsilon} of $\lambda_{p\pm}'(\delta)$, and similarly, 
\begin{equation*}
\lambda_{2p-1}^\epsilon(\theta) \leq \lambda_{2p}^0(0) - K_p\theta+ C_p\epsilon^{2}.
\end{equation*}
For $\theta \geq d_{p,5} \epsilon^2$, we conclude the proof of this proposition by choosing $d_{p,5} := (M_p+C_p)/K_p$.
\end{proof}

Thanks to the previous proposition, we only need to construct approximate solutions of \eqref{equation_eigenvalue_problem_S2} for $\theta \in [0,d_{p,5}\epsilon^{2}]$ (instead of $[0,d_{p,1}\epsilon]$ as in Section~\ref{section:4}) and we therefore write the following Ansatz: 
\begin{equation*}
\left\{
\begin{aligned}
\lambda_{p\pm}(\theta = \delta\epsilon^2,\epsilon) &\approx \lambda_{2p}^0(0) + \epsilon^2\lambda''_{p\pm}(\delta) =: \lambda^{\app}_{p\pm}(\delta,\epsilon) \\
 \Phi(\theta = \delta\epsilon^2,\epsilon,x,z) &\approx U_{p\pm}^0(x,z) + \epsilon U'_{p\pm}(x,z) + \epsilon^2 U''_{p\pm}(\delta,x,z) =: U^{\app}_{p\pm}(\delta,\epsilon,x,z).
\end{aligned}\right.
\end{equation*}
As in Section~\ref{section:4}, if we insert this Ansatz in \eqref{equation_eigenvalue_problem_S1} and formally identify the terms of order $1$, we find that $U_{p\pm}^0$ solves the spectral problem for flat bottom with periodic boundary conditions \eqref{quasimodes_order_epsilon_system_u_0} and therefore $U_{p\pm}^0 = \alpha^\pm_+ \Phi_p + \alpha^\pm_- \Phi_{-p}$, with $\alpha^\pm_+,\alpha^\pm_- \in \mathbb{R}$ and $\Phi_p(x,z):=\Phi_p(0,x,z)$ given in \eqref{phi-p} with $\theta=0$.

Identifying the terms of order $\epsilon$, we request that $U_{p\pm}'$ solves
\begin{equation}\label{systeme_U_prime_ordre_2}
\left\{
\begin{aligned}
& -\Delta U_{p\pm}' = \div(Q_1\nabla U_{p\pm}^0) \qquad \textrm{in} \; S \\
& \partial_z U_{p\pm}' = - Q_1 \nabla U_{p\pm}^0 \cdot e_z \qquad \textrm{on} \; \{z = -1\} \\
& \partial_z U_{p\pm}' = \lambda_{2p}^0(0) U_{p\pm}' - Q_1 \nabla U_{p\pm}^0 \cdot e_z \qquad \textrm{on} \; \{z = 0\}.
\end{aligned}\right.
\end{equation}
Using \eqref{ortho-0} as we did to obtain \eqref{opening_order_epsilon_equation_lambda_prime}, we get that this system has a solution if and only if
\[
0 = \int_S Q_1 U_{p\pm}^0 \cdot \nabla \overline{\Phi_{p}} =\int_S Q_1 U_{p\pm}^0 \cdot \nabla \overline{\Phi_{-p}} .
\]
Since $\widehat{b}_{2p} = 0$, Lemma~\ref{calcul-des-3-integrales} implies that the above condition is always satisfied. The next lemma provides an explicit formula for the solution $U_{p\pm}'$. The details of the computations are given in Appendix~\ref{app-B}.

\begin{lemma}\label{formula_U_prime_ordre_2}
A particular solution of \eqref{systeme_U_prime_ordre_2} is given by
\begin{equation*}
U'_{p \pm}(x,z) = E_\pm(x,z) + \sum_{k \notin \{0,p,-p\}} (\beta_k \cosh(k(z+1)) + \gamma_k\sinh(k(z+1)))e^{ikx}
\end{equation*}
where
\begin{equation*}
\gamma_k = \frac{p}{\cosh(p)}(-\alpha_+^{\pm}\widehat{b}_{k-p}+\alpha_-^{\pm}\widehat{b}_{k+p}), \quad
\beta_k = \frac{k^2 - \kappa_k(0)\kappa_p(0)}{k(\kappa_p(0) - \kappa_k(0))}\gamma_k,
\end{equation*}
and $E_\pm = \alpha^{\pm}_+ E_p + \alpha^{\pm}_- E_{-p}$, 
with
\begin{equation} \label{fonction-auxil-G}
E_{\pm p} := -\frac{p}{\cosh(p)}z \sinh(p(z+1)) b(x)e^{\pm ipx}.
\end{equation}
\end{lemma}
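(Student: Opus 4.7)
The plan is to verify the formula by explicit construction, writing $U'_{p\pm} = E_\pm + W$ where $E_\pm$ is a designed particular solution absorbing the inhomogeneity $\div(Q_1\nabla U^0_{p\pm})$ in the bulk, and $W$ is a harmonic correction chosen to fix the boundary data via Fourier series. By linearity of \eqref{systeme_U_prime_ordre_2} in $(\alpha^\pm_+,\alpha^\pm_-)$, it suffices to treat separately the two basic cases $U^0_{p\pm}=\Phi_p$ and $U^0_{p\pm}=\Phi_{-p}$; the second follows from the first by the substitution $p\mapsto -p$, so I focus on $U^0_{p\pm}=\Phi_p$.

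A direct derivative computation using $\Phi_p(x,z)=\tfrac{\cosh(p(z+1))}{\cosh p}e^{ipx}$ and the explicit matrix $Q_1$ yields
\[
\div(Q_1\nabla\Phi_p) = \tfrac{e^{ipx}}{\cosh p}\bigl[2p^2 b\cosh(p(z+1)) + 2ip^2 zb'\sinh(p(z+1)) + pzb''\sinh(p(z+1))\bigr].
\]
Writing $E_p=-\tfrac{p}{\cosh p}\,z\sinh(p(z+1))\,b(x)e^{ipx}$ as a product $f(x)g(z)$ with $f=be^{ipx}$ and $g=z\sinh(p(z+1))$, the factorization $\Delta(fg)=(f''+p^2 f)g + f(g''-p^2 g)$ combined with the identities $(g''-p^2 g)(z)=2p\cosh(p(z+1))$ and $f''+p^2 f=(b''+2ipb')e^{ipx}$ delivers $-\Delta E_p=\div(Q_1\nabla\Phi_p)$ term by term. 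Since $g(0)=g(-1)=0$, the surface and bottom traces of $E_p$ both vanish, while one computes $\partial_z E_p|_{z=0}=-\kappa_p(0)b e^{ipx}$ and $\partial_z E_p|_{z=-1}=\tfrac{p^2}{\cosh p} b e^{ipx}$.

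Setting $W:=U'_p - E_p$ and inserting the explicit traces of $Q_1\nabla\Phi_p\cdot e_z$ (equal to $\kappa_p(0)be^{ipx}$ at $z=0$ and $-\tfrac{ipb'}{\cosh p}e^{ipx}$ at $z=-1$), the system \eqref{systeme_U_prime_ordre_2} reduces to the harmonic problem $-\Delta W=0$ in $S$, $\partial_z W|_{z=0}=\kappa_p(0)\,W|_{z=0}$, and $\partial_z W|_{z=-1}=-\tfrac{p}{\cosh p}\sum_k k\,\widehat{b}_{k-p}e^{ikx}$ (after Fourier-expanding $b(x)=\sum_m\widehat{b}_m e^{imx}$ and reindexing $k=m+p$). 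Representing $W$ in the separated harmonic form $W=\sum_k(\beta_k\cosh(k(z+1))+\gamma_k\sinh(k(z+1)))e^{ikx}$, the bottom condition forces $\gamma_k=-\tfrac{p}{\cosh p}\widehat{b}_{k-p}$ for each $k\neq 0$, and the surface condition produces $(k\sinh k-\kappa_p(0)\cosh k)\beta_k=(\kappa_p(0)\sinh k - k\cosh k)\gamma_k$; invoking $\kappa_k(0)=k\tanh k$ (equivalently $k\sinh k=\kappa_k(0)\cosh k$) transforms this into the claimed identity $\beta_k=\tfrac{k^2-\kappa_k(0)\kappa_p(0)}{k(\kappa_p(0)-\kappa_k(0))}\gamma_k$.

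The main (mild) obstacle is the treatment of the excluded modes $k\in\{0,p,-p\}$. The $k=0$ mode is trivial: both boundary conditions force $\beta_0=0$ and leave $\gamma_0$ free, so we may drop the $k=0$ term. For the resonant indices $k=\pm p$ the denominator $\kappa_p(0)-\kappa_k(0)$ vanishes; solvability of the bottom condition requires $\widehat{b}_0=0$ and $\widehat{b}_{-2p}=\overline{\widehat{b}_{2p}}=0$, both of which hold thanks to the normalization of $b$ and the standing hypothesis $\widehat{b}_{2p}=0$ of Theorem~\ref{one_gap-order-eps2}, and the surface equation then collapses to $0=0$. These resonant modes span the kernel of the homogeneous problem (they are proportional to $\Phi_{\pm p}$), so their coefficients are free and are set to zero in the particular solution. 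Repeating the analysis for $U^0_{p\pm}=\Phi_{-p}$ and superposing with weights $\alpha^\pm_+$ and $\alpha^\pm_-$ produces the combined formula $\gamma_k=\tfrac{p}{\cosh p}(-\alpha^\pm_+\widehat{b}_{k-p}+\alpha^\pm_-\widehat{b}_{k+p})$; the detailed bookkeeping is deferred to Appendix~\ref{app-B}.
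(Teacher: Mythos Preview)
Your proof is correct and follows essentially the same approach as the paper's Appendix~B: subtract the explicit particular solution $E_\pm$ (verified via $-\Delta E_{\pm p}=\div(Q_1\nabla\Phi_{\pm p})$), reduce to a harmonic problem for $W=U'_{p\pm}-E_\pm$ with a Steklov condition at $z=0$ and an explicit Neumann datum at $z=-1$, and solve mode-by-mode in Fourier. The only cosmetic difference is that you exploit linearity to treat $\Phi_p$ and $\Phi_{-p}$ separately before superposing, whereas the paper carries the combination $\alpha^\pm_+\Phi_p+\alpha^\pm_-\Phi_{-p}$ throughout; your remark that $\gamma_0$ is ``free'' is slightly off (the $\sinh$ term vanishes identically at $k=0$), but the conclusion $\beta_0=0$ and the handling of the resonant modes $k=\pm p$ match the paper.
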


Now, we identify the terms of order $\epsilon^2$ and request that $U_{p\pm}''$ solves
\begin{equation*}
\left\{
\begin{aligned}
& -\Delta U_{p\pm}'' = \div(Q_1\nabla U_{p\pm}') + \div(Q_2\nabla U_{p\pm}^0) + 2i\delta \partial_x U_{p\pm}^0 \qquad \textrm{in} \; S, \\
& \partial_z U_{p\pm}'' = - (Q_1 \nabla U_{p\pm}')\cdot e_z - (Q_2 \nabla U_{p\pm}^0 )\cdot e_z \qquad \textrm{on} \; \{z = -1\}, \\
& \partial_z U_{p\pm}'' = \lambda_{2p}^0(0) U_{p\pm}'' - (Q_1 \nabla U_{p\pm}')\cdot e_z - (Q_2 \nabla U_{p\pm}^0 )\cdot e_z + \lambda''_{p\pm}(\delta)U_{p\pm}^0 \; \textrm{on} \; \{z = 0\}.
\end{aligned}\right.
\end{equation*}
The orthogonality conditions to solve this system are similar to \eqref{opening_order_epsilon_equation_lambda_prime} and take the form:
\begin{equation}\label{orthogonality_condition_lambda''}
\begin{aligned}
\lambda''_{p\pm}(\delta) \int_\gamma U_{p\pm}^0 \overline{\Phi_{p}} =& \int_S Q_1 \nabla U_{p\pm}'\cdot \nabla \overline{\Phi_{p}} + \int_S Q_2 \nabla U_{p\pm}^0 \cdot \nabla \overline{\Phi_{p}} \\&+ 2i\delta\int_S U_{p\pm}^0 \overline{ \partial_x \Phi_{p}},\\
\lambda''_{p\pm}(\delta) \int_\gamma U_{p\pm}^0 \overline{\Phi_{- p}} =& \int_S Q_1 \nabla U_{p\pm}'\cdot \nabla \overline{\Phi_{- p}} + \int_S Q_2 \nabla U_{p\pm}^0 \cdot \nabla \overline{\Phi_{- p}} \\
&+ 2i\delta\int_S U_{p\pm}^0 \overline{ \partial_x \Phi_{- p}}.
\end{aligned}
\end{equation}

We now compute the values of $\lambda''_{p\pm}(\delta)$ for which these conditions are satisfied.

\begin{proposition}
The orthogonality conditions \eqref{orthogonality_condition_lambda''} are satisfied if and only if 
\begin{equation}\label{equation_vp_lambda_second}
N_p \begin{pmatrix}
 \alpha_+^\pm \\ \alpha_-^\pm
\end{pmatrix} = \lambda''_{p\pm}(\delta)\begin{pmatrix}
 \alpha_+^\pm \\ \alpha_-^\pm
\end{pmatrix}
\end{equation}
where
\begin{equation*}
N_p = 
\begin{pmatrix}
 (K_p\delta + J_p(b)) & -S_p(b)\\ -\overline{S_p(b)} & (-K_p\delta + J_p(b))
\end{pmatrix}
\end{equation*}
\begin{align*}
J_p(b) &= \frac{p^2}{\cosh(p)^2}\sum_{k \notin \{0,p,-p\}}\frac{k^2-\kappa_k(0)\kappa_p(0)}{\kappa_p(0) - \kappa_k(0)}|\widehat{b}_{k-p}|^2 \\
S_p(b) &= \frac{p^2}{\cosh(p)^2}\sum_{k \notin \{0,p,-p\}}\frac{k^2-\kappa_k(0)\kappa_p(0)}{\kappa_p(0) - \kappa_k(0)}\widehat{b}_{k+p}\overline{\widehat{b}}_{k-p}.
\end{align*}
In this case we have
\begin{equation}\label{definition_lambda''} 
\lambda_{p\pm}''(\delta) = J_p(b) \pm \sqrt{K_p^2\delta^2 + |S_p(b)|^2}.
\end{equation}
\end{proposition}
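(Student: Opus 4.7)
My plan is to mimic the derivation of Proposition~\ref{quasimodes_order_epsilon_nazarov_gap_formula}: evaluate each of the four integrals in \eqref{orthogonality_condition_lambda''} as linear expressions in $\alpha^\pm_+$ and $\alpha^\pm_-$ to rewrite the two conditions as the $2\times2$ linear system \eqref{equation_vp_lambda_second}, then diagonalize the resulting matrix. The boundary integrals $\int_\gamma U^0_{p\pm}\overline{\Phi_{\pm p}}=2\pi\alpha^\pm_\pm$ and the drift integrals $2i\delta\int_S U^0_{p\pm}\overline{\partial_x \Phi_{\pm p}}=\pm 2\pi\delta\alpha^\pm_\pm K_p$ are already available from \eqref{coeffs_orthogonality_conditions_U_prime_1}--\eqref{coeffs_orthogonality_conditions_U_prime_3}; they furnish the diagonal $\pm K_p\delta$ entries of $N_p$. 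The substantive work is therefore to compute $\int_S Q_2\nabla U^0_{p\pm}\cdot\nabla\overline{\Phi_{\pm p}}$ and $\int_S Q_1\nabla U'_{p\pm}\cdot\nabla\overline{\Phi_{\pm p}}$, which together produce the $J_p(b)$ and $S_p(b)$ contributions.

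For the $Q_2$-integral I would use the explicit form of $Q_2$ in \eqref{Q1-Qk}, which only contributes through its $(2,2)$-entry $b(x)^2+(zb'(x))^2$, reducing the integral to $\int_S (b^2+(zb')^2)\,\partial_z\Phi_m\,\overline{\partial_z\Phi_{\pm p}}\,dxdz$ for $m=\pm p$. Expanding $b^2$ and $(b')^2$ in Fourier series and using orthogonality in $x$ isolates the modes that pair $e^{imx}$ with $e^{\mp ipx}$ via $\widehat{b}_{m\mp p}$, producing terms proportional to $|\widehat b_{k-p}|^2$ (on the diagonal, after setting $k=m+2p$ or similar) and $\widehat b_{k+p}\overline{\widehat b_{k-p}}$ (off the diagonal); the $z$-integrations produce hyperbolic factors that, once combined with the $Q_1\nabla U'$ output below, will collapse to the coefficients $(k^2-\kappa_k(0)\kappa_p(0))/(\kappa_p(0)-\kappa_k(0))$.

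The hardest step, and the main obstacle, is the $Q_1\nabla U'_{p\pm}$ integral. Here I would substitute the explicit formula for $U'_{p\pm}$ given by Lemma~\ref{formula_U_prime_ordre_2}, writing $U'_{p\pm}=E_\pm+\sum_{k\notin\{0,\pm p\}}(\beta_k\cosh(k(z+1))+\gamma_k\sinh(k(z+1)))e^{ikx}$ with the coefficients $\gamma_k$, $\beta_k$ recalled from that lemma. The function $E_\pm$ carries a factor $b(x)e^{\pm ipx}$, so its contribution combines with $Q_1$ (which is linear in $b$ and $b'$) to give another product of Fourier coefficients of $b$, handled as in Appendix~\ref{app-A}. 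The series part, when paired with $Q_1\nabla\overline{\Phi_{\pm p}}$, contributes, for each $k$, a factor $\gamma_k\cdot(\text{hyperbolic }z\text{-integral})\cdot\widehat b_{\pm p-k}$ and similarly from $\beta_k$. Using the identity $\beta_k=\frac{k^2-\kappa_k(0)\kappa_p(0)}{k(\kappa_p(0)-\kappa_k(0))}\gamma_k$, together with the $z$-integrals
\[
\int_{-1}^0\!\!\sinh(k(z{+}1))\sinh(p(z{+}1))\,dz \text{ and } \int_{-1}^0\!\!\cosh(k(z{+}1))\cosh(p(z{+}1))\,dz,
\]
I expect the $k$- and $p$-dependent prefactors to collapse to precisely $\frac{p^2}{\cosh(p)^2}\cdot\frac{k^2-\kappa_k(0)\kappa_p(0)}{\kappa_p(0)-\kappa_k(0)}$ multiplied by the appropriate product of Fourier coefficients of $b$. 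The diagonal entries of the resulting system will sum over $|\widehat b_{k-p}|^2$ (with $\widehat b_{2p}=0$ killing any spurious cross-term coming from $k=-p$ in the off-diagonal), giving $J_p(b)$, while the off-diagonal entries sum over $\widehat b_{k+p}\overline{\widehat b_{k-p}}$, giving $-S_p(b)$ and $-\overline{S_p(b)}$.

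Once the system \eqref{equation_vp_lambda_second} is established, the computation of $\lambda''_{p\pm}(\delta)$ is immediate: write $N_p=J_p(b)\,I+M$ with
\[
M=\begin{pmatrix}K_p\delta & -S_p(b)\\ -\overline{S_p(b)} & -K_p\delta\end{pmatrix}.
\]
Since $\operatorname{tr}M=0$ and $\det M=-(K_p^2\delta^2+|S_p(b)|^2)$, the eigenvalues of $M$ are $\pm\sqrt{K_p^2\delta^2+|S_p(b)|^2}$, yielding \eqref{definition_lambda''}. The delicate point of the whole argument is the bookkeeping of Fourier and hyperbolic integrals in the $Q_1\nabla U'$ term; I would isolate this calculation as a separate lemma (parallel to Lemma~\ref{calcul-des-3-integrales}) and relegate its verification to an appendix.
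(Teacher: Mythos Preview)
Your plan is sound in principle but takes a much more laborious route than the paper. Rather than evaluate the bulk integrals $\int_S Q_1\nabla U'_{p\pm}\cdot\nabla\overline{\Phi_{\beta p}}$ and $\int_S Q_2\nabla U^0_{p\pm}\cdot\nabla\overline{\Phi_{\beta p}}$ separately by substituting Fourier series and computing all the $z$-integrals of hyperbolic products, the paper integrates the $Q_1$ term by parts twice, exploiting the identities $\div(Q_1\nabla\Phi_{\beta p})=-\Delta E_{\beta p}$ and $\Delta(U'_{p\pm}-E_\pm)=0$ from Lemma~\ref{formula_U_prime_ordre_2} together with $E_{\beta p}|_{\partial S}=0$. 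This converts the $Q_1$ bulk integral into $\int_S\Delta E_\pm\,E_{-\beta p}$ plus boundary terms. The key step is then the cancellation identity \eqref{claim}, $\int_S\Delta E_\pm\,E_{\beta p}=-\int_S Q_2\nabla U^0_{p\pm}\cdot\nabla\Phi_{\beta p}$, which eliminates the $Q_2$ integral entirely; after a further cancellation of the $z=0$ boundary contributions, the whole orthogonality condition reduces to a single boundary integral at $z=-1$ in which only the $\beta_k$ coefficients of the harmonic part of $U'_{p\pm}$ survive. That boundary integral is a one-line Fourier computation giving $J_p$ and $S_p$ directly. Your direct approach should yield the same result, but the step you flag as ``I expect the prefactors to collapse'' is precisely the nontrivial simplification that the paper's integration-by-parts route makes transparent; in your version it remains an unverified (and rather involved) calculation.
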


\begin{proof}
The first term on the right-hand side of \eqref{orthogonality_condition_lambda''} takes the form
\[
\int_S Q_1 \nabla U_{p\pm}' \cdot \nabla \overline{\Phi_{\beta p} }
= \int_{\partial S} U_{p\pm}' (Q_1\nabla \overline{ \Phi_{\beta p}}) \cdot \vec{n} - \int_S U_{p\pm}'\div(Q_1\nabla \Phi_{-\beta p})
\]
for $\beta \in \{+,-\}$.
Using $\div(Q_1\nabla \Phi_{-\beta p})= - \Delta E_{-\beta p}$ (see \eqref{laplacien-E-p}), we write
 \begin{align*}
 - \int_S U_{p\pm}'\div(Q_1\nabla \Phi_{-\beta p})
 &= \int_S U_{p\pm}'\Delta E_{-\beta p}\\
 &=\int_S \Delta U_{p\pm}' E_{-\beta p} + \int_{\partial S}(U_{p\pm}'\partial_n E_{-\beta p} - E_{-\beta p}\partial_n U_{p\pm}')\\
 &=\int_S \Delta E_\pm E_{-\beta p} + \int_{\partial S}U_{p\pm}'\partial_n E_{-\beta p}
 \end{align*} 
since $E_{\beta p} = 0$ on $\partial S$.
 A calculation (see proof in Appendix~\ref{app-C}) shows that 
\begin{equation} \label{claim}
\int_S \Delta E_\pm E_{\beta p} =- \int_S Q_2 \nabla U_{p\pm}^0 \cdot \nabla \Phi_{\beta p}, 
\end{equation} 
thus, 
\begin{equation*}
\int_S Q_1 \nabla U_{p\pm}' \cdot \nabla \overline{\Phi_{\beta p} }
= \int_{\partial S} U_{p\pm}' (Q_1\nabla \overline{ \Phi_{\beta p}})\cdot \vec{n} - \int_S Q_2 \nabla U_{p\pm}^0 \cdot \nabla \overline{\Phi_{\beta p}} + \int_{\partial S}U_{p\pm}'\partial_n E_{-\beta p}.
 \end{equation*}
We now return to orthogonality condition \eqref{orthogonality_condition_lambda''} which becomes, 
\begin{align*}
\lambda''_{p\pm}(\delta) \int_\gamma U_{p\pm}^0 \overline{\Phi_{\beta p}} &= \int_{\partial S} U'_{p\pm} (Q_1\nabla \overline{\Phi_{\beta p}})\cdot \vec{n} + \int_{\partial S}U'_{p\pm}\partial_n E_{-\beta p} + 2i\delta\int_S U_{\pm p}^0 \partial_x \overline{\Phi_{\beta p}} \\
&= -\int_{\{z=-1\}}U'_{p\pm}\Big((Q_1\nabla \overline{\Phi_{\beta p}})\cdot e_z + \partial_z E_{-\beta p}\Big) + 2i\delta\int_S U_{\pm p}^0 \partial_x \overline{\Phi_{\beta p}},
\end{align*}
where we have used $-(Q_1\nabla \overline{\Phi_{\beta p}})\cdot e_z= -\kappa_p(0)b(x)e^{-i\beta px} = \partial_z E_{-\beta p}$ on $\{z=0\}$ see \eqref{B2}-\eqref{B3}.
With \eqref{B2}-\eqref{B3}, we compute the boundary term for $\beta= +$ and $\beta=-$ respectively
\begin{align*}
-&\int_{\{z=-1\}} U'_{p\pm}(Q_1 \nabla \Phi_{-p} \cdot e_z + \partial_z E_{-p}) \\ 
=&- \sum_{k \notin \{0,p,-p\}} \beta_k\int_0^{2\pi}\left(\frac{ip}{\cosh(p)}b'(x)+\frac{p^2}{\cosh(p)}b(x)\right) e^{i(k-p)x} \dd x \\
=&-2\pi \sum_{k \notin \{0,p,-p\}} \beta_k\frac{kp}{\cosh(p)}\overline{\widehat{b}_{k-p}} \\
=& 2\pi\sum_{k \notin \{0,p,-p\}}\frac{k^2-\kappa_k(0)\kappa_p(0)}{k(\kappa_p(0) - \kappa_k(0))}\frac{p}{\cosh(p)}(\alpha_+^{\pm}\widehat{b}_{k-p}-\alpha_-^{\pm}\widehat{b}_{k+p})\frac{kp}{\cosh(p)}\overline{\widehat{b}_{k-p}} \\
=& \alpha_+^{\pm}\frac{2\pi p^2}{\cosh(p)^2}\sum_{k \notin \{0,p,-p\}}\frac{k^2-\kappa_k(0)\kappa_p(0)}{\kappa_p(0)- \kappa_k(0)}|\widehat{b}_{k-p}|^2 \\
&-\alpha_-^{\pm}\frac{2\pi p^2}{\cosh(p)^2}\sum_{k \notin \{0,p,-p\}}\frac{k^2-\kappa_k(0)\kappa_p(0)}{\kappa_p(0) - \kappa_k(0)}\widehat{b}_{k+p}\overline{\widehat{b}_{k-p}}\\
=&2\pi J_p(b)\alpha_+^{\pm}-2\pi S_p(b)\alpha_-^{\pm}
\end{align*}
and
\begin{align*}
-&\int_{\{z=-1\}} U'_{p\pm}(Q_1 \nabla \Phi_p \cdot e_z + \partial_z E_p) \\
=& \sum_{k \notin \{0,p,-p\}} \beta_k\int\left(\frac{ip}{\cosh(p)}b'(x)-\frac{p^2}{\cosh(p)}b(x)\right) e^{i(k+p)x} \dd x \\
=& 2\pi\sum_{k \notin \{0,p,-p\}} \beta_k\frac{kp}{\cosh(p)}\overline{\widehat{b}_{k+p}} \\
=& 2\pi \sum_{k \notin \{0,p,-p\}}\frac{k^2-\kappa_k(0)\kappa_p(0)}{k(\kappa_p(0) - \kappa_k(0))}\frac{p}{\cosh(p)}(-\alpha_+^{\pm}\widehat{b}_{k-p}+\alpha_-^{\pm}\widehat{b}_{k+p})\frac{kp}{\cosh(p)}\overline{\widehat{b}_{k+p}} \\
=&- \alpha_+^{\pm}\frac{2\pi p^2}{\cosh(p)^2}\sum_{k \notin \{0,p,-p\}}\frac{k^2-\kappa_k(0)\kappa_p(0)}{\kappa_p(0) - \kappa_k(0)}\widehat{b}_{k-p}\overline{\widehat{b}_{k+p}}\\ &+\alpha_-^{\pm}\frac{2\pi p^2}{\cosh(p)^2}\sum_{\tilde k \notin \{0,p,-p\}}\frac{\tilde k^2-\kappa_{\tilde k}(0)\kappa_p(0)}{\kappa_p(0) - \kappa_{\tilde k}(0)}|\widehat{b}_{-\tilde k+p}|^2\\
=&-2\pi \overline{S_p(b)} \alpha_+^{\pm}+2\pi J_p(b)\alpha_-^{\pm},
\end{align*}
where we set $\tilde k=-k$ in the last sum. Using \eqref{coeffs_orthogonality_conditions_U_prime_1}, \eqref{coeffs_orthogonality_conditions_U_prime_2} and \eqref{coeffs_orthogonality_conditions_U_prime_3} (with the definition of $K_p$ given in Proposition~\ref{quasimodes_order_epsilon_nazarov_gap_formula}) we get that \eqref{orthogonality_condition_lambda''} can be written as \eqref{equation_vp_lambda_second}. 
\end{proof}
Let
$\xi_{p\pm}^{\app} := U^{\app}_{p\pm}\vert_{z=0}, \;\;
\tau_{p\pm}^{\app} := (1+\lambda_{p\pm}^{\app})^{-1}$, where 
 $ U^{\app}_{p\pm}, \lambda_{p\pm}^{\app}$ are defined just before \eqref{systeme_U_prime_ordre_2}.
The next result shows that $(\tau_{p\pm}^{\app},\xi_{p\pm}^{\app})$ are approximate eigenpairs of the resolvent operator.

\begin{proposition}\label{estimate_quasimodes_order_epsilon_2}
Under the assumptions of Theorem~\ref{one_gap-order-eps2} and assuming $\epsilon$ is small enough, we have for all $\delta\in [0,d_{p,5}]$
\begin{equation*}
 \norm{(1+G_{\delta \epsilon^2}[\epsilon b])^{-1}\xi_{p\pm}^{\app} - \tau_{p\pm}^{\app}\xi_{p\pm}^{\app}}_{L^2(\T_{2\pi})} \leq C_{b,p}\epsilon^3\norm{\xi_{p\pm}^{\app}}_{L^2(\T_{2\pi})}.
\end{equation*}
\end{proposition}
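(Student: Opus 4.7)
The plan is to mirror the structure of the proof of Proposition~\ref{quasimodes_order_epsilon_estimate_quasimodes}, but carry the expansion one order further so the residue is $O(\epsilon^3)$. Substitute $U_{p\pm}^{\app} = U_{p\pm}^0 + \epsilon U'_{p\pm} + \epsilon^2 U''_{p\pm}$ and $\theta = \delta\epsilon^2$ into the sesquilinear form $a^{R,S}_{\delta\epsilon^2,\epsilon}$ given by \eqref{definition_a_S}, using the Taylor expansion $P(\Sigma) = I_2 + \epsilon Q_1 + \sum_{k\ge 2}\epsilon^k Q_k$ from \eqref{Q1-Qk}. Collecting by powers of $\epsilon$, the $\epsilon^0$ contribution vanishes by \eqref{quasimodes_order_epsilon_system_u_0}, the $\epsilon^1$ contribution vanishes by construction of $U'_{p\pm}$ in \eqref{systeme_U_prime_ordre_2} (here the drift term $2i\theta\partial_x U^0_{p\pm}$ does not enter because $\theta = \delta\epsilon^2$ has size $\epsilon^2$, not $\epsilon$), and the $\epsilon^2$ contribution vanishes precisely because $U''_{p\pm}$ solves the order-$\epsilon^2$ system whose solvability condition is exactly \eqref{orthogonality_condition_lambda''}, which the eigen-equation \eqref{equation_vp_lambda_second} for $\lambda''_{p\pm}$ enforces. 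The result should read
\[
 a^{R,S}_{\delta\epsilon^2,\epsilon}(U^{\app}_{p\pm},V) = (1+\lambda^{\app}_{p\pm})\int_\gamma U^{\app}_{p\pm}\overline{V} + \widetilde L(V),
\]
with $\widetilde L(V)$ a sum of integrals each carrying a factor $\epsilon^3$ (or higher) coming from $Q_k$ with $k\ge 3$, from cross-terms $\epsilon^3 Q_1\nabla U''_{p\pm}$ and $\epsilon^3 Q_2\nabla U'_{p\pm}$, from $\delta\epsilon^3$-type drift contributions pairing $\theta=\delta\epsilon^2$ with $\epsilon Q_1$ or $\epsilon U'$, and from the $\theta^2(1-\epsilon b)$ mass term whose lowest order is $\delta^2\epsilon^4$.

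Next I would estimate $\widetilde L$ in $H^1(S)'$ by
\[
 \|\widetilde L\|_{H^1(S)'} \le C_{b,p}\epsilon^3\bigl(\|U^0_{p\pm}\|_{H^1(S)} + \|U'_{p\pm}\|_{H^1(S)} + \|U''_{p\pm}\|_{H^1(S)}\bigr),
\]
relying on $\|b\|_{W^{1,\infty}}<\infty$ to control the matrices $Q_k$ uniformly and on $|\delta|\le d_{p,5}$ to absorb the $\delta$-dependence in a $p$-dependent constant. Lemma~\ref{formula_U_prime_ordre_2} together with elliptic regularity gives $\|U'_{p\pm}\|_{H^1(S)} \le C_{b,p}\|U^0_{p\pm}\|_{H^1(S)}$. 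For $U''_{p\pm}$ I would argue as in the proof of Proposition~\ref{quasimodes_order_epsilon_estimate_quasimodes}: rewrite its defining system variationally, use the orthogonality relation \eqref{orthogonality_condition_lambda''} to write its surface trace as $-\tau_{2p}^0(0) R_{2p,0}^{-1}$ applied to a right-hand side controlled in $L^2$ by $\|U^0_{p\pm}\|_{H^1(S)} + \|U'_{p\pm}\|_{H^1(S)}$, then conclude $\|U''_{p\pm}\|_{H^1(S)} \le C_{b,p}\|U^0_{p\pm}\|_{H^1(S)}$.

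Having $\widetilde L$ controlled, apply Lax--Milgram with the uniformly coercive form $a^{R,S}_{\delta\epsilon^2,\epsilon}$ (Remark~\ref{remark_variational_formulation_S}) to find $\widetilde W \in H^1(S)$ with $a^{R,S}_{\delta\epsilon^2,\epsilon}(\widetilde W,\cdot) = \widetilde L(\cdot)$ and $\|\widetilde W\|_{H^1(S)} \le C_{b,p}\epsilon^3\|U^0_{p\pm}\|_{H^1(S)}$. Then $U^{\app}_{p\pm} - \widetilde W$ satisfies the variational formulation of \eqref{equation_resolvant_S} with $\xi = (1+\lambda^{\app}_{p\pm})\xi^{\app}_{p\pm}$, so Proposition~\ref{proposition_formulation_resolvant_S} yields
\[
 (1+G_{\delta\epsilon^2}[\epsilon b])^{-1}\xi^{\app}_{p\pm} - \tau^{\app}_{p\pm}\xi^{\app}_{p\pm} = -\tau^{\app}_{p\pm}\widetilde W\bigr|_{z=0},
\]
hence the $L^2$-norm of the left side is bounded by $C_{b,p}\epsilon^3\|U^0_{p\pm}\|_{H^1(S)}$. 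Finally, for $\epsilon$ small enough (depending on $b$ and $p$), the lower bound $\|\xi^{\app}_{p\pm}\|_{L^2(\T_{2\pi})} \ge c_{b,p}\|U^0_{p\pm}\|_{H^1(S)}$ is obtained exactly as in \eqref{borne_inf_norme_U_app_ordre_epsilon} using the trace inequality and the fact that $\epsilon(U'_{p\pm} + \epsilon U''_{p\pm})$ is an $O(\epsilon)$ perturbation of $U^0_{p\pm}$.

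The main obstacle will be the bookkeeping at order $\epsilon^2$ in $\widetilde L$: one must verify that every $\epsilon^2$-term either cancels identically against $(1+\lambda^{\app}_{p\pm})\int_\gamma U^{\app}_{p\pm}\overline V$ via the defining system for $U''_{p\pm}$, or carries an extra factor of $\epsilon$ making it genuinely order $\epsilon^3$. In particular the term $\delta^2\epsilon^4\int_S(1-\epsilon b)U^{\app}_{p\pm}\overline V$ from $\theta^2$ contributes only at $\epsilon^4$ (since $\theta=\delta\epsilon^2$), while the drift term $i\delta\epsilon^2\int_S e_1\cdot(U^{\app}_{p\pm}\nabla\overline V - \overline V\nabla U^{\app}_{p\pm})$ produces a leading piece that must be matched against the $2i\delta\partial_x U^0_{p\pm}$ source in the $U''_{p\pm}$ system; this is precisely why the factor $2i\delta$ appears in \eqref{orthogonality_condition_lambda''} and \eqref{equation_vp_lambda_second}, which is the reason the cancellation closes.
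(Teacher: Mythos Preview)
Your proposal is correct and follows essentially the same approach as the paper: insert the Ansatz into $a^{R,S}_{\delta\epsilon^2,\epsilon}$, identify the $O(\epsilon^3)$ residual $\widetilde L$, solve for $\widetilde W$ by Lax--Milgram with the uniformly coercive form, control $\|U'_{p\pm}\|_{H^1}$ and $\|U''_{p\pm}\|_{H^1}$ via $R_{2p,0}^{-1}$, and conclude with the lower bound on $\|\xi^{\app}_{p\pm}\|_{L^2}$. Your write-up is in fact more explicit than the paper's about why each order $\epsilon^0,\epsilon^1,\epsilon^2$ cancels (in particular the correct observation that $\theta=\delta\epsilon^2$ pushes the drift term to order $\epsilon^2$, where it is absorbed by the $U''_{p\pm}$ system), but the argument is the same.
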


\begin{proof}
Inserting $U^{\app}_{p\pm} =U_{p\pm}^0 + \epsilon U'_{p\pm} + \epsilon^2 U''_{p\pm}$ in \eqref{definition_a_S}, we find that, for $V \in H^1(S)$, 
\begin{align*}
a^{R,S}_{\delta\epsilon^2,\epsilon}(U^{\app}_{p\pm} ,V) =&
\int_S P(\Sigma)\nabla U^{\app}_{p\pm} \cdot \nabla \overline{V} + \delta^2\epsilon^4 \int_S (1-\epsilon b(x)) U^{\app}_{p\pm} \overline{V} \\
&+ \int_\gamma U^{\app}_{p\pm} \overline{V} + i\delta\epsilon^2 \int \left(e_1 +\epsilon\begin{pmatrix}
-b(x) \\
z b'(x)
\end{pmatrix}\right) \cdot( U^{\app}_{p\pm} \nabla \overline{V}-\overline{V}\nabla U^{\app}_{p\pm} ) \\
=& (\lambda_{p\pm}^{\app}+1)\int_\gamma U^{\app}_{p\pm}\overline{V} + \tilde{L}(V)
\end{align*}
where ($\epsilon^3\widetilde{Q}
=P(\Sigma)-I_2 -\epsilon Q_1 - \epsilon^2 Q_2$)
\begin{align*}
&\tilde{L}(V) :=
-\epsilon^3 \lambda_{p\pm}''(\delta)\int_\gamma (U'_{p\pm} + \epsilon U''_{p\pm})\overline{V}
+
\epsilon^3\int_S \widetilde{Q}\nabla U^{\app}_{p\pm}\cdot \nabla \overline{V} \\
&+ \epsilon^3 \int_S Q_2 \nabla(U'_{p\pm} + \epsilon U''_{p\pm})\cdot\nabla \overline{V}
+ \epsilon^3\int_S Q_1 \nabla U''_{p\pm}\cdot\nabla \overline{V} 
+ \delta^2\epsilon^4\int_S (1-\epsilon b)U^{\app}_{p\pm}\overline{V}\\
&
+ i\delta\epsilon^3\int_S \begin{pmatrix}
-b(x) \\
z b'(x)
\end{pmatrix} \cdot (U^{\app}_{p\pm} \nabla \overline{V} - \overline{V} \nabla U^{\app}_{p\pm}) + i\delta\epsilon^3\int_S (U'_{p\pm} + \epsilon U''_{p\pm})\partial_x \overline{V} - \overline{V}\partial_x (U'_{p\pm} + \epsilon U''_{p\pm}) .
\end{align*}
Proceeding as we did to prove \eqref{norme_quasimode_non_normalise_ordre_epsilon}, we get 
\begin{align*}
&\norm{(1+G_{\delta \epsilon^2}[\epsilon b])^{-1}\xi_{p\pm}^{\app} - \tau_{p\pm}^{\app}\xi_{p\pm}^{\app}}_{L^2(\T_{2\pi})} \leq C\norm{\tilde{L}}_{H^1(S)'} \\
& \qquad\leq C_{b,p}\epsilon^3(\norm{U_{p\pm}^0}_{H^1(S)} + \norm{U'_{p\pm}}_{H^1(S)} + \norm{U''_{p\pm}(\delta)}_{H^1(S)}).
\end{align*} 
Using the operator $R_{2p,0}^{-1}$ (defined by \eqref{operateur-R_pinv}) we can show that
\begin{align*}
\norm{U'_{p\pm}}_{H^1(S)} &\leq C_{b,p}\norm{U^0_{p\pm}}_{H^1(S)} \\
\norm{U''_{p\pm}}_{H^1(S)} &\leq C_{b,p}\norm{U^0_{p\pm}}_{H^1(S)}, 
\end{align*} 
and therefore
\begin{equation*}
\norm{(1+G_{\delta \epsilon^2} [\epsilon b])^{-1}\xi_{p\pm}^{\app} - \tau_{p\pm}^{\app}\xi_{p\pm}^{\app}}_{L^2(\T_{2\pi})} \leq C_{b,p}\epsilon^3\norm{U_{p\pm}^0}_{H^1(S)}.
\end{equation*} 
Similar to \eqref{borne_inf_norme_U_app_ordre_epsilon}, if $\epsilon$ is small enough, $\norm{U_{p\pm}^{\app}} \geq \frac{1}{2}\norm{U_{p\pm}^0}$,
and
\begin{equation*}
\norm{\xi_{p\pm}^{\app}}_{L^2(\T_{2\pi})} \geq c_{b,p}\norm{U^0_{p\pm}}_{H^1(S)}
\end{equation*}
Proposition~\ref{estimate_quasimodes_order_epsilon_2} follows.
\end{proof}

We are now ready to establish the existence of a gap as we did at the end of Section~\ref{section:4}. 
For $\delta\in [0,d_{p,5}]$, hence $\theta \in [0,d_{p,5}\epsilon^2]$, Lemma~\ref{lemma:quasimodes} gives that 
\begin{align*}
 & \lambda_{2p}^\epsilon(\theta) \geq \lambda_{2p}^0(0) + \epsilon^2 J_p(b) + |S_p(b)|\epsilon^2 - C_p\epsilon^{3}, \\
 & \lambda_{2p-1}^\epsilon(\theta) \leq \lambda_{2p}^0(0) + \epsilon^2 J_p(b) - |S_p(b)|\epsilon^2 + C_p\epsilon^{3}.
\end{align*}
This gap remains open for $\theta\in[d_{p,5}\epsilon^2,d_{p,1}\epsilon]$ by Proposition~\ref{single-ev-higher-order} and for $\theta\in[d_{p,1}\epsilon,\frac12-d_{p,1}\epsilon]$ by Proposition~\ref{simple-ev}.
The precise size of the gap is derived by considering small $\delta$ and $\theta$, say $\theta=\delta \epsilon^2\leq \epsilon^3$, which conclude the proof of Theorem~\ref{one_gap-order-eps2}. Note that the center of the gap is displaced from the unperturbed value $\lambda_{2p}^0(0)$.

\section*{Acknowledgements}
We thank Antoine Venaille for pointing out to us several physics references.

This material is based upon work supported by the Swedish Research Council under grant no. 2021-06594 while the authors were in residence at the Institut Mittag-Leffler in Djursholm, Sweden, during the program “Order and Randomness in Partial Differential Equations” in the fall 2023. CL is partially supported by the BOURGEONS project grant ANR-23-CE40-0014-01 of the French National Research Agency (ANR). MM acknowledges financial support from the European Union (ERC, PASTIS, Grant Agreement n$^\circ$101075879). Views and opinions expressed are however those of the authors only and do not necessarily reflect those of the European Union or the European Research Council Executive Agency. Neither the European Union nor the granting authority can be held responsible for them. CS is partially supported by the Natural Sciences and Engineering Research Council of Canada (NSERC) under grant no.2018-04536. The authors thank the IDEX Universit\'e Grenoble Alpes for funding MM mobility grant.

\appendix
\section{Proof of Lemma~\ref{calcul-des-3-integrales} }\label{app-A}

Write $\Phi_p(x,z) = \rho_p(z)e^{ipx}$ with $\rho_p(z) = \frac{\cosh(p(z+1))}{\cosh(p)}$. We have $\nabla \Phi_p = \begin{pmatrix}
 ip\rho_p(z) e^{ipx} \\ \rho_p'(z) e^{ipx}
\end{pmatrix}$.
From $Q_1$ given in \eqref{Q1-Qk}, we have
\begin{align} 
&Q_1 \nabla \Phi_p = \begin{pmatrix}
 -ip\rho_p(z)b(x) + z\rho_p'(z)b'(x) \\ ipz\rho_p(z)b'(x) + \rho_p'(z)b(x) 
\end{pmatrix}e^{ipx} \label{Q1nablaPhip} \\
&Q_1 \nabla \Phi_p \cdot \nabla \overline{\Phi_p} = b(x)(\rho_p'(z)^2 - p^2 \rho_p(z)^2). \nonumber
\end{align}

Since $\int_0^{2\pi} b(x) dx =0$, we have $\int Q_1 \nabla \Phi_p \cdot \nabla \overline{\Phi_{p}} = 0$.
Now let us compute
\begin{align*}
Q_1 \nabla \Phi_p \cdot \nabla \overline{\Phi_{-p}} &= Q_1 \nabla \Phi_p \cdot \nabla \Phi_p \\
&= b(x)(\rho_p'(z)^2 + p^2 \rho_p(z)^2)e^{2ipx} +2ipz\rho_p(z)\rho'_p(z)b'(x)e^{2ipx},
\end{align*}
and notice that $2ip\widehat{b'}_{-2p} = 4p^2 \widehat{b}_{-2p}$ and that
\begin{align*}
 \int_{-1}^0 z\rho_p'(z)\rho_p(z)\dd z &= \frac{1}{2}\bigg[z\rho_p^2(z)\bigg]_{-1}^0 -\frac{1}{2}\int_{-1}^0 \rho_p^2(z) \dd z.
\end{align*}
Therefore,
\begin{align*}
\int_S Q_1 \nabla \Phi_p \cdot \nabla \overline{\Phi_{-p}} &= 2\pi\overline{\widehat{b}_{2p}}\int_{-1}^0 \rho_p'(z)^2 - p^2\rho_p(z)^2 \dd z + 2\frac{p^2}{\cosh^2(p)}2\pi \overline{\widehat{b}_{2p}}.
\end{align*}
Finally, 
\begin{align*}
\rho_p'(z)^2 - p^2 \rho_p(z)^2 = \frac{p^2}{\cosh^2(p)}(\sinh^2(p(z+1)) - \cosh^2(p(z+1))) = -\frac{p^2}{\cosh^2(p)}.
\end{align*}
We have thus obtained \eqref{integral_Q1_nabla_U_0_nabla_phi_q} and its conjugate \eqref{integral_Q1_nabla_U_0_nabla_phi_q2}.

\section{Proof of Lemma~\ref{formula_U_prime_ordre_2}}\label{app-B}

From \eqref{Q1nablaPhip} and its complex conjugate, we have
\begin{align*}
&Q_1 \nabla U_{p\pm}^0 = \alpha^{\pm}_+ Q_1\nabla\Phi_p + \alpha^{\pm}_- Q_1\nabla\Phi_{-p} \\
&\; = \alpha^{\pm}_+\begin{pmatrix}
-ip\rho_p(z)b(x) + z\rho_p'(z)b'(x) \\
ipz\rho_p(z)b'(x) + \rho_p'(z)b(x)
\end{pmatrix}e^{ipx} 
+ \alpha^{\pm}_- \begin{pmatrix}
ip\rho_p(z)b(x) + z\rho_p'(z)b'(x) \\
-ipz\rho_p(z)b'(x) + \rho_p'(z)b(x)
\end{pmatrix}e^{-ipx},
\end{align*}
hence
\begin{align*}
\div(Q_1 \nabla U_{p\pm}^0)
=& \alpha^{\pm}_+\big(2p^2\rho_p(z)b(x) + 2ipz\rho_p'(z)b'(x) 
+ z\rho_p'(z)b''(x)\big)e^{ipx} \\
&+ \alpha^{\pm}_-\big(2p^2\rho_p(z)b(x) - 2ipz\rho_p'(z)b'(x)
+ z\rho_p'(z)b''(x)\big)e^{-ipx}.
\end{align*}
A direct calculation on $E_{\pm p} = -z\rho_p'(z)b(x)e^{\pm ipx}$ shows that 
\begin{equation} \label{laplacien-E-p}
-\Delta E_p(x,z) =\div(Q_1 \nabla \Phi_p), \; 
-\Delta E_{-p}(x,z) =\div(Q_1 \nabla \Phi_{-p}),
\end{equation}
hence $-\Delta E_\pm = \div (Q_1 \nabla U_{p\pm}^0)$, with $E_\pm$ defined in \eqref{fonction-auxil-G}.
It is thus natural to introduce the function $V=U'_{p\pm}-E_\pm$. We find $-\Delta V = 0 $.
In order to rewrite \eqref{systeme_U_prime_ordre_2} as a system for $V$, we compute the boundary conditions. On the one hand, 
\begin{equation*}
\partial_z E_\pm = -(z\rho_p''(z) + \rho_p'(z))b(x)(\alpha^{\pm}_+ e^{ipx} + \alpha^{\pm}_- e^{-ipx})
\end{equation*}
implies
\begin{equation}\label{B2}
\begin{aligned}
\partial_z E_\pm(x,-1) &= \frac{p^2}{\cosh(p)}b(x)(\alpha^{\pm}_+ e^{ipx} + \alpha^{\pm}_- e^{-ipx}) \cr
\partial_z E_\pm(x,0) &= -\kappa_p(0) b(x)(\alpha^{\pm}_+ e^{ipx} + \alpha^{\pm}_- e^{-ipx}).
\end{aligned}
\end{equation}
On the other hand, 
\begin{align*}
-Q_1 \nabla U_{p\pm}^0\cdot e_z =& -\alpha^{\pm}_+(ipz\rho_p(z)b'(x) + \rho_p'(z)b(x))e^{ipx} \\
& - \alpha^{\pm}_-(-ipz\rho_p(z)b'(x) + \rho_p'(z)b(x))e^{-ipx}
\end{align*}
gives
\begin{equation}\label{B3}
\begin{aligned}
&-Q_1 \nabla U_{p\pm}^0\cdot e_z (x,-1) = \alpha^{\pm}_+\frac{ip}{\cosh(p)}b'(x)e^{ipx} - \alpha^{\pm}_-\frac{ip}{\cosh(p)}b'(x)e^{-ipx} \cr
&-Q_1 \nabla U_{p\pm}^0\cdot e_z(x,0) = -\kappa_p(0) b(x)(\alpha^{\pm}_+ e^{ipx} + \alpha^{\pm}_- e^{-ipx}).
\end{aligned}
\end{equation}
We finally find that $V$ satisfies the system :
\begin{equation*}
\left\{
\begin{aligned}
 -\Delta V =& 0 \qquad \textrm{in} \; S \\
 \partial_z V =& \alpha_+^{\pm}\bigg(\frac{ip}{\cosh(p)}b'(x)-\frac{p^2}{\cosh(p)}b(x)\bigg)e^{ipx} \\
&+ \alpha_-^{\pm}\bigg(-\frac{ip}{\cosh(p)}b'(x)-\frac{p^2}{\cosh(p)}b(x)\bigg)e^{-ipx} \qquad \textrm{on} \; \{z = -1\} \\
 \partial_z V =& \kappa_p(0) V \qquad \textrm{on} \; \{z = 0\}.
\end{aligned}\right.
\end{equation*}

Denoting $V_k(z) = \widehat{V}(k,z)$, the Fourier coefficients of $V$ in the $x$-variable, we have
\begin{equation*}
\left\{
\begin{aligned}
V_k(z) =& \beta_k\cosh(k(z+1)) + \gamma_k\sinh(k(z+1)) \\
V_k'(0) =& \kappa_p(0) V_k(0), \;\;
V_k'(-1) = \frac{kp}{\cosh(p)}(-\alpha_+^{\pm}\widehat{b}_{k-p}+\alpha_-^{\pm}\widehat{b}_{k+p}).
\end{aligned}\right.
\end{equation*}

For $k \neq 0,\pm p $,
\begin{equation*}
\gamma_k = \frac{p}{\cosh(p)}(-\alpha_+^{\pm}\widehat{b}_{k-p}+\alpha_-^{\pm}\widehat{b}_{k+p}), \;\; 
\beta_k = \frac{k^2 - \kappa_k(0)\kappa_p(0)}{k(\kappa_p(0) - \kappa_k(0))}\gamma_k.
\end{equation*}
When $k=\pm p$, $\gamma_{\pm p} = 0$, but there is no condition on $\beta_{\pm p}$. It means that
the function $(x,z)\mapsto \cosh(\pm p(z+1))e^{\pm ipx}$ belongs to the kernel of the system, and one solution is given by choosing $\beta_{p}=\beta_{-p}=0$. Finally, when $k=0$, $V_0'(-1) = \kappa_p(0) V_0$ gives $\beta_0=0$.

\section{Proof of \eqref{claim} }\label{app-C}

For $\beta \in \{+,-\}$, we have by \eqref{laplacien-E-p}
\begin{align*}
\int_S& \Delta E_\pm E_{\beta p} = -\int_S \div(Q_1 \nabla U_{p\pm}^0) E_{\beta p} \\
=& \alpha_+^\pm \int_S \big(2p^2\rho_p(z)b(x) + 2ipz\rho_p'(z)b'(x) + z\rho_p'(z)b''(x)\big)z\rho_p'(z)b(x)e^{i(p+\beta p) x} \dd x \dd z \\
&+ \alpha^{\pm}_-\int_S \big(2p^2\rho_p(z)b(x) - 2ipz\rho_p'(z)b'(x) 
+ z\rho_p'(z)b''(x)\big)z\rho_p'(z)b(x) e^{i(\beta p-p) x} \dd x \dd z \\
=:& \alpha_+^\pm {\textrm{I}}_+^\beta + \alpha^{\pm}_- {\textrm{I}}_-^\beta
\end{align*}
Now we compute $Q_2\nabla U_{q\pm}^0$ where $Q_2=\begin{pmatrix}
 0 & 0 \\ 0 & b(x)^2 + (zb'(x))^2
\end{pmatrix}$.
We find
\begin{multline*}
\int_S Q_2\nabla U_{p\pm}^0 \cdot \nabla \Phi_p 
= \alpha^{\pm}_+ \int_S \rho_p'(z)^2(b^2(x) + (zb'(x))^2)e^{i(p+\beta p) x}\dd x \dd z \\
+ \alpha^{\pm}_- \int_S \rho_p'(z)^2(b^2(x) + (zb'(x))^2) e^{i(\beta p-p) x} \dd x \dd z = :
 \alpha_+^\pm {\textrm{II}}_+^\beta + \alpha^{\pm}_- {\textrm{II}}_-^\beta .
\end{multline*}

Using $\int_{0}^{2\pi} b(x)b'(x)\dd x=0$, $\int_{0}^{2\pi} b(x)b''(x)\dd x=-\int_{0}^{2\pi} (b'(x))^2\dd x$ and that
\begin{gather*}
 2ip\int_0^{2\pi}b(x) b'(x) e^{2ipx}\dd x=-\int_0^{2\pi} (b(x) b''(x) + (b'(x))^2) e^{2ipx}\dd x\\
 \int_{-1}^0 2p^2z\rho_p(z)\rho_p'(z) \dd z=\int_{-1}^0 2z\rho_p''(z)\rho_p'(z) \dd z= -\int_{-1}^0 (\rho_p'(z))^2 \dd z
\end{gather*}
we show that $ \textrm{I}_+^\beta +
\textrm{II}_+^\beta = \textrm{I}_-^\beta +
\textrm{II}_-^\beta =0$, implying \eqref{claim}.

\end{document}